\documentclass[10pt]{article}
\usepackage{amsmath,stackengine}
\usepackage{amssymb}
\usepackage{titlesec}
\usepackage[T1]{fontenc}
\usepackage{lmodern}
\usepackage{graphicx}
\usepackage{titling}
\usepackage[english]{babel}
\usepackage[margin=2.5cm]{geometry}
\usepackage{csquotes}
\usepackage{lipsum}
\usepackage[english]{babel}
\usepackage{amsthm}
\usepackage{amssymb}
\usepackage{bm}
\usepackage{hyperref}
\usepackage[backend=biber]{biblatex}
\usepackage{mathrsfs}
\usepackage{color}
\usepackage{mathtools,amsfonts,amssymb,setspace}

\usepackage{enumitem}
\usepackage{subfigure}
\usepackage[font=small,labelfont=bf]{caption}
\newtheorem{theorem}{Theorem}[section]

\newtheorem{lemma}[theorem]{Lemma}
\theoremstyle{definition}
\newtheorem{definition}{Definition}[section]
\newtheorem{prop}[theorem]{Proposition}
\theoremstyle{definition}

\theoremstyle{definition}
\newtheorem{remark}{Remark}

\newcommand{\subjclass}[1]{\small	\par\noindent\textbf{Mathematics Subject Classification (2020): }#1\par}
\newcommand{\keywords}[1]{\small	\par\noindent\textbf{Keywords: }#1\par}
\DeclareMathAlphabet\mathbfcal{OMS}{cmsy}{b}{n}
\newcommand{\RomanNumeralCaps}[1]{\MakeUppercase{\romannumeral #1}}

\definecolor{green}{rgb}{0.1,0.62,0.0}
\definecolor{blue}{rgb}{0.0, 0.0, 1.0}
\definecolor{deepblue}{rgb}{0.0,0.0,0.7}
\definecolor{red}{rgb}{1.0, 0.0, 0.0}

\font\myfont=cmbx12 at 11pt
\font\myfond=cmr12 at 10pt
\title{\myfont POINTWISE TRACKING OPTIMAL CONTROL PROBLEM FOR CAHN-HILLIARD NAVIER-STOKES SYSTEM }
\author{\myfond SHEETAL DHARMATTI \thanks{sheetal@iisertvm.ac.in \; School of Mathematics, Indian Institute of Science Education and Research, Thiruvananthapuram, Maruthamala PO, Vithura, Thiruvananthapuram, Kerala, 695551, INDIA.}  \;  and  \;  GREESHMA K \thanks{ greeshmak21@iisertvm.ac.in \; School of Mathematics, Indian Institute of Science Education and Research, Thiruvananthapuram, Maruthamala PO, Vithura, Thiruvananthapuram, Kerala, 695551, INDIA. }
} 
\date{}

\numberwithin{equation}{section}
\thanksmarkseries{arabic}

\begin{document}

\maketitle

\begin{abstract}
    We study a pointwise tracking optimal control problem for the two-dimensional local Cahn–Hilliard Navier–Stokes system, which models the evolution of two immiscible, incompressible fluids. The source term in the Cahn–Hilliard equation acts as a control, and the cost functional measures the deviation of the phase variable from desired values at a finite set of spatial points over time. This setting reflects realistic applications where only a limited number of sensors are available. We also study a variant of the above pointwise tracking control problem where the cost is incorporated with a terminal time pointwise tracking term. The main mathematical difficulty arises from the low regularity of the cost functional due to the pointwise evaluation of the state variables. We prove the existence of strong solutions, establish the existence of an optimal control, and the differentiability of the control-to-state mapping. We define the adjoint system using a transposition method to characterise optimal control. Moreover, a first-order necessary optimality condition is derived in terms of the adjoint for both problems. Furthermore, we prove that our analysis can be extended to the case of singular potentials.
\end{abstract}
\keywords{ Cahn-Hilliard Navier-Stokes system, Optimal control, Pointwise tracking, Regular potential, Singular potential, Transposition method}
\subjclass{49J20, 49K20}

\setcounter{section}{1}
\section*{Introduction}

In many practical problems, it is difficult to apply control or collect data in the entire domain. Instead, a control is often applied at a finite number of points, and/or observations are taken only at specific points. This motivates studying the optimal control problems where the control is applied at, or the data is observed at, a finite number of spatial points.
These problems are particularly relevant in applications such as fluid flow control and temperature regulation, where the use of pointwise sensors is more realistic and cost-effective \cite{ZLJ, ZLJ3}. However, the mathematical analysis is complex, since pointwise data evaluation requires higher regularity of solutions. In this work, we consider two pointwise tracking optimal control problems for the coupled Cahn-Hilliard Navier-Stokes system. One for pointwise tracking at almost all times, and the second, where, along with pointwise tracking at almost all times, pointwise tracking at the terminal time is also included.

To describe the problem at hand, let $\Omega \subseteq \mathbb{R}^2$ be an open bounded domain with a smooth boundary $\partial \Omega$. Consider a finite subset of $\Omega$, say, $\mathcal{D} = \{ x_1, x_2,..., x_k\}$. Define $\Phi^i(t): [0,T] \rightarrow \mathbb{R}$, the desired trajectory at point $x_i$, for $1 \leq i\leq k$ and $\textbf{u}_d$ denote the desired velocity field on $\Omega\times [0,T]$. We first study the following optimal control problem: minimise a tracking type cost functional given by,
\begin{align}\label{eqJ0}
    \mathcal{J}_1(\varphi,\textbf{u}, U) &= \frac{1}{2}\sum\limits_{i=1}^{k}\int\limits_{0}^{T}(\varphi(x_i,t)-\Phi^i(t))^2 dt
     +\frac{1}{2}\int\limits_{0}^{T}\int_\Omega(\textbf{u}-\textbf{u}_d)^2 dxdt + \frac{1}{2}\int_\Omega(\textbf{u}(x,T)-\textbf{u}_d(x,T))^2 dx
    + \frac{1}{2}\int\limits_{0}^{T}\int_\Omega|U|^2 dxdt
\end{align}
over an admissible set of controls, where $(\varphi, \textbf{u})$ solve the following CHNS system  derived in \cite{AMW, GPV, LKW}.
\begin{align}
\varphi'+\textbf{u}\cdot\nabla\varphi &=\ \nabla\cdot( m(\varphi)\nabla\mu) + U,     & \text{in}\;\; \Omega\times [0,T],\label{eq8}
\\\mu &=\ -\Delta\varphi + F '\left(\varphi\right),    &\text{in}\;\; \Omega\times [0,T],\label{eq9}
\\\textbf{u}'-2\nabla\cdot\left(\nu\left(\varphi\right)D\textbf{u} \right) + (\textbf{u}\cdot\nabla) \textbf{u} + \nabla \pi &=\ \mu\nabla\varphi ,   & \text{in}\;\; \Omega\times [0,T],\label{eq10}
\\ \nabla\cdot \textbf{u}  & =  0, & \text{in}  \;\; \Omega\times [0,T].\label{eq11}
\end{align}
with initial and boundary conditions,
\begin{align}
\frac{\partial \varphi}{\partial n} = \frac{\partial \mu}{\partial n} &= 0 , \text{ on } \partial\Omega\times [0,T], \label{eq12}
\\\textbf{u} &=\textbf{0},  \text{ on }    \partial\Omega\times [0,T],\label{eq13}
\\ \varphi\left(x , 0 \right) =\varphi_{0}, \textbf{u}(x,0) &= \textbf{u}_0 \text{ in }  \Omega.\label{eq14}
\end{align}
Note that $D\textbf{u}$ denotes the symmetric gradient $(\nabla\textbf{u}+ \nabla^{T}\textbf{u})/2$. Here, $\mu$ denotes the chemical potential, and the above system is local. We get the nonlocal system by replacing \eqref{eq10} by $\mu = - J \ast\varphi + F '\left(\varphi\right)$. In this work, we consider a regular potential of polynomially controlled growth and a constant mobility. For example, consider the double-well potential given by,
 \begin{equation}
     F(s) = \frac{(1-s^2)^2}{4}, \hspace{.5cm} s\in [-1,1].
 \end{equation}
 Here, the control is applied as an external mass source term, $U$. 
 In the first problem, we consider the cost functional ${\mathcal{J}_1}$ in which the desired state at pre-decided points $x_i$ should be close to  $\Phi^i(t) = \varphi^i$ for $t\in [0, T] \; a. e. $, where $\varphi^i \in \mathbb{R}$ is a constant and hence $\Phi^i(t)$ is  a  constant trajectory.

Our second (more involved) problem is a variant of the first problem, in which we retain the same admissible control set and the state system, while the objective functional is modified to include pointwise tracking at the terminal time.  
We consider the cost functional,
\begin{align}\label{J1}
    \mathcal{J}_2(\varphi,\textbf{u}, U) &= \frac{1}{2}\sum\limits_{i=1}^{k}\int\limits_{0}^{T}(\varphi(x_i,t)-\Phi^i(t))^2 dt
    + \frac{1}{2}\sum\limits_{i=1}^{k}(\varphi(x_i,T)-\Phi^i(T))^2 +\frac{1}{2}\int\limits_{0}^{T}\int_\Omega(\textbf{u}-\textbf{u}_d)^2 dxdt \nonumber\\ &\hspace{.5cm}+ \frac{1}{2}\int_\Omega(\textbf{u}(x,T)-\textbf{u}_d(x,T))^2 dx
    + \frac{1}{2}\int\limits_{0}^{T}\int_\Omega|U|^2 dxdt
\end{align}
Due to the additional term in $\mathcal{J}_2$, we need to study higher regularity of the solution as well as differentiability properties of the control to state operator in a strong space.

 The above cost functional are physically relevant and practically applicable, and have been studied in the literature for some stationary systems. Most of the papers which study the pointwise tracking optimal control are based on numerical approaches \cite{AFO, BND, CAC, LWN, FOE, FED, ZLJ4}. In a recent work by G. Peralta \cite{PRG3}, an optimal control problem for the stationary Stokes equation is studied where the objective functional involves a pointwise evaluation of velocity, stress and pressure. However, as far as our knowledge of evolutionary systems, the pointwise tracking type optimal control problems have not been studied yet. 

Nevertheless, the optimal control problem for evolution equations, where the control is applied only at a finite number of spatial points, has been studied in the literature for the Boussinesq type system \cite{JRP, PJP}.
Later, K. Kunisch and E. Casas studied a similar problem for parabolic equations and Navier-Stokes equations in a more general setting where the control applied is a measurable function \cite{CAK, CAK2, CKK}. Furthermore, G. Peralta \cite{PRG1, PRG2} extended a similar analysis to the viscous Cahn-Hilliard-Oberbeck-Boussinesq phase-field system, and in particular, these results apply to the CHNS system. Distributed optimal control problems for CHNS systems are studied in  \cite{BSM, TMD, FGS} and references therein and boundary control problems in  \cite{MTD}.



In this work, we consider the pointwise tracking optimal control problem of a local CHNS system with a regular potential. As mentioned earlier, the pointwise tracking type optimal control problem for evolutionary equations has not been studied in the literature. The novelty of our work lies in treating evolutionary systems with strong nonlinear coupling, which raises new analytical challenges and opportunities. The major difficulty arises from the low regularity of the cost functionals $\mathcal{J}_1$ and $\mathcal{J}_2$, which involve point evaluations of the phase variable $\varphi$. To justify the differentiability of the control to state operator and derive first-order necessary conditions, a strong solution of the system is required. Moreover, the associated adjoint system contains a measure valued source term arising from the Dirac evaluations at the observation points. To handle this, we adopt a transposition method, defining the solutions of  the adjoint state 
proving well-posedness using duality arguments.
The optimal tracking in the terminal time problem is even more challenging due to a low regular term in the terminal time in the cost. Increasing the regularity of parameters of the system enables us to deduce a higher regular solution and establish differentiability results for the control to state operator, which in turn are needed for obtaining the optimality conditions. We use the transposition method to define the solution of the adjoint system, and then we characterise the optimal control in terms of the adjoint state. 


 Later on, we study the problem for the local system with a singular potential. A primary example of a singular potential which is practically relevant is the logarithmic potential defined by,
\begin{equation}\label{eq15}
F_{\log}(s) = (1+s)\log(1+s) + (1-s)\log(1-s), \hspace{.5cm} s\in(-1,1),
\end{equation}
which is widely used in phase-field models. To analyse the optimal control problem, we require the existence of a unique strong solution. The main difficulty in establishing higher regularity arises from the blow-up of derivatives of the potential. To overcome this, we adopt the methods developed in \cite{AMT, JNG, JWH}. In particular, by exploiting the separation property established in \cite{JNG}, and by restricting $F$ to compact subsets of (-1, 1), we can avoid the blow up to obtain solutions with higher regularity. Once this higher regularity is established, the remaining proof can be done on similar lines of proof for a regular potential. 

In the following section, we discuss the mathematical setup, the notations used, and some preliminary results on the well-posedness of the system. In Section 3, we study the pointwise tracking optimal control problem for the cost functional given by \eqref{eqJ0}. After formally stating the problem, we prove the existence of an optimal control. To characterise the optimal control, we first investigate the well-posedness results of the linear system using standard techniques. We also establish higher regularity of the solution of the linearised variable, which is essential for defining the solution of the adjoint system by the transposition method. Furthermore, we derive a necessary condition for optimality in terms of the adjoint variable. In section 4, we study the pointwise tracking in the terminal time problem corresponding to \eqref{J1}. We study the existence of an optimal control and establish a differentiability result for the control to state operator in a strong space. Regularity estimates for the differentiability of the control to state operator are derived in the appendix. Moreover, we deduce the optimality conditions using the adjoint system. The analysis of the problem with a singular potential is presented in Section 5, and we end the paper with concluding remarks and outline some future directions.

\section{Notations and Preliminary results}

Let $\Omega \subseteq \mathbb{R}^2$ be an open, bounded, connected domain with a smooth boundary $\partial\Omega$. We denote $\Omega\times [0,T]$ and $\partial\Omega\times [0,T]$ by $Q$ and $\Sigma$ respectively. Let $H$ denote the set of square integrable functions on $\Omega$ and $\|.\|$ , $(.,.)$ be the norm and inner product in the space $H$ respectively. We denote the collections of functions in $H $, whose first order weak derivative is also in $H$ by $V$. The norm and inner product in this space is denoted by $\|.\|_{V}$ and $(.,.)$ respectively. 
The inner product in $V$ is defined by,
\begin{align*}
    (f,g) = (\nabla f, \nabla g), \hspace{.5cm} \text{ for } f,g\in V.
\end{align*}
We denote by bold font $\textbf{H}$ and $\textbf{V}$ the spaces $H\times H$ and $V \times V$.
Let the $\mathcal{D}(\Omega)$ represent the set of $C^\infty$ functions on $\Omega$ that are divergence free and with a compact support. The closure of $\mathcal{D}(\Omega)$ in $\textbf{H}$ and $\textbf{V}$ is denoted by $\mathbb{G}_{div}$ and $\mathbb{V}_{div}$ respectively. From \cite{TEM}, a characterisation of above divergence free spaces follows. 
\begin{align*}
    \mathbb{G}_{div} &= \{ \textbf{u}\in L^2(\Omega;\mathbb{R}^2): \text{div}(\textbf{u})=0, \textbf{u} \cdot \textbf{n}|_{\partial \Omega}=0\},\\
 \mathbb{V}_{div} &= \{ \textbf{u}\in H^1_0(\Omega;\mathbb{R}^2): \text{div}(\textbf{u})=0\}.
\end{align*}
Here $\|.\|$, $\|.\|_{\mathbb{V}_{div}}$ denotes the norm in $\mathbb{G}_{div}$ and $\mathbb{V}_{div}$ respectively. Further, $C^{0,\gamma}(\Omega)$ denotes the holder continuous function on $\Omega$. Furthermore, define,
\begin{equation*}
H^2_N = \{ f\in H^2(\Omega) : \frac{\partial f}{\partial \textbf{n}}  = 0 \text{ on } \partial\Omega\}.    
\end{equation*}
Further, we define the Stokes operator $\mathbfcal{S} : \textbf{D}(\mathbfcal{S})= \mathbb{V}_{div}\cap \textbf{H}^2$ onto the space $\mathbb{G}_{div}$ by,
\begin{equation*}
    (\mathbfcal{S}\textbf{u}, \textbf{w}) = (\nabla\textbf{v}, \nabla\textbf{w}), \forall \textbf{w} \in \mathbb{V}_{div}.
\end{equation*}

Now we will introduce a few inequalities that will be used frequently in following analysis. 
\\\\\textit{Gagliardo-Nirenberg Inequality:} Let $1\leq q \leq +\infty$, $0\leq j< m$, $1\leq r\leq +\infty$, $p\geq 1$ and $\theta \in [0,1]$ satisfies, 
\begin{equation*}
    \frac{1}{p} = \frac{j}{n} + \theta \big( \frac{1}{r}- \frac{m}{n}\big) + \frac{(1-\theta)}{q},  \hspace{.5cm} \frac{j}{m} \leq \theta \leq 1, 
\end{equation*}
Then,
 \begin{equation}\label{eq1.1b}
    \|D^j u\|_{L^p(\Omega)} \leq C\|D^m u\|_{L^r(\Omega)} ^\theta\| u\|_{L^q(\Omega)}^{(1-\theta)} + \|u\|_{L^s(\Omega)},
\end{equation}
 holds true for any $u\in L^q(\mathbb{R}^n)$ with $D^mu\in L^r(\mathbb{R}^n)$ where $s$ is arbitrary with the following exception \cite{BRZ}.
 \begin{itemize}
     \item If $r>1$, $m-j-\frac{n}{r} \geq 0$, then we need the additional condition $\frac{j}{m} \leq \theta < 1$.\label{c2}
\end{itemize}
 The constant $C$ depends only on $n, m, j, q, r$ and $\theta$. 
Equivalently, if we choose $s = \min\{p, q\}$ then \eqref{eq1.1b} is equivalent to the following \cite{BRZ}. 
\begin{equation}\label{eq1.1c}
    \|D^j u\|_{L^p(\Omega)} \leq C\|u\|_{W^{m,r}(\Omega)} ^\theta\| u\|_{L^q(\Omega)}^{(1-\theta)}.
\end{equation}
\\\\\textit{Agmon's Inequality:}
Let $u\in H^2(\Omega)$ then the following inequality holds \cite{TEM}.
\begin{equation}
    \|u\|_{L^\infty(\Omega)} \leq \|u\|_{H^2(\Omega)}^{1/2}\|u\|^{1/2}.
\end{equation}
\\We set the following assumptions on the mobility, the viscosity, external forcing term and the potential.
\begin{enumerate}[font={\bfseries},label={A\arabic*.}]
    \item[{[A1]}] The mobility, $m$ and viscosity, $\eta$ are continuous and satisfy,
    \begin{align*}
     m_0\leq m(s)\leq m_1,\hspace{.5cm} \eta_0\leq \eta(s)\leq \eta_1, \hspace{.5cm}\forall s\in [-1,1], 
    \end{align*}
    where $m_0$, $m_1$, $\eta_0$, $\eta_1$ are positive constants.
    \item[{[A2]}] The potential, $F\in C^2(\mathbb{R})$ is non-negative and satisfies the following,
    \begin{align}\label{F1}
        |F''(s)|\leq C_0(1+|s|^r), \hspace{.5cm}|F'(s)|\leq C_1F(s)+ C_2, \hspace{.5cm} F(s) \geq C_3|s|^2-C_4,\,\, \forall s\in \mathbb{R}
    \end{align}
    for $r\in [0,\infty)$ with positive constants $C_0$, $C_1$, $C_2$, $C_3$ and $C_4$ that are independent of $s$. 
\end{enumerate}

To investigate the optimal control problem (OCP), it is essential first to establish the well-posedness of the system \eqref{eq8}–\eqref{eq14}. Moreover, we need to examine the existence of strong solutions to ensure that the cost functional, which involves pointwise information, is well-defined. The existence results and the continuous dependence of the solution on the source term follow directly from the results established in \cite{LKW} for the CHNS system with chemotaxis and mass source term. Though we will be studying the control problem for dimension 2 only,  the existence results stated below holds true for  dimensions two and three.

\begin{prop}[\textit{Existence of a weak solution to the local CHNS system}]\label{prop1}
    Let $d\in \{ 2,3 \}$, $\varphi_0 \in V$ and $\textbf{u}_0 \in \mathbb{G}_{div}$. The external source term, $U\in L^2(0,T;H)$. Assume \textbf{[A1]},\textbf{[A2]} holds. Then there exist a weak solution $(\varphi, \textbf{u})$ to the system \eqref{eq8}-\eqref{eq14} that satisfies,
    \begin{align*}
    \varphi &\in L^\infty(0,T; V)\cap L^2(0,T;H^3)\cap H^1(0,T;V'),\\
    \mu &\in L^2(0,T;V),\\
    \textbf{u}&\in L^\infty(0,T;\mathbb{G}_{div})\cap L^2(0,T;\mathbb{V}_{div}) \cap H^1(0,T;\mathbb{V}_{div}').
    \end{align*}
And a weak formulation,    
    \begin{align}
      \langle\varphi', \psi\rangle+ (\textbf{u}\cdot\nabla\varphi, \psi)+ ( m(\varphi)\nabla(-\Delta\varphi+F'(\varphi)), \nabla\psi) &= (U, \psi), \label{eqw1}\\    
      \langle\textbf{u}', \textbf{v}\rangle+ 2(\left(\nu\left(\varphi\right)D\textbf{u}, D\textbf{v} \right) + ((\textbf{u}\cdot\nabla) \textbf{u}, \textbf{v}) &= (\mu\nabla\varphi, \textbf{v}) ,\label{eqw3}
    \end{align}
for all $\psi \in V$ and $\textbf{v}\in \mathbb{V}_{div}$. Furthermore,
    \begin{align*}
    \varphi\left(x , 0 \right) &=\varphi_{0}(x),\hspace{.5cm} \textbf{u}(x,0)= \textbf{u}_0(x), \text{ a.e in} \; \; \Omega.\end{align*} 
\end{prop}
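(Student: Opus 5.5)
We will prove Proposition \ref{prop1} by a Faedo--Galerkin scheme combined with the energy identity of the system, and then pass to the limit by compactness. Concretely, let $\{w_j\}$ be the eigenfunctions of the Neumann operator $-\Delta+I$ (orthogonal in $H$, with $w_j\in H^2_N$ smooth) and $\{\textbf{v}_j\}$ the eigenfunctions of the Stokes operator $\mathbfcal{S}$. We look for $\varphi_n=\sum a_j(t)w_j$, $\mu_n=P_n(-\Delta\varphi_n+F'(\varphi_n))$ and $\textbf{u}_n=\sum b_j(t)\textbf{v}_j$ solving the projections of \eqref{eqw1}--\eqref{eqw3}. Local solvability of this ODE system follows from Carath\'eodory's theorem, since $U\in L^2(0,T;H)$ and $F\in C^2$. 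The a priori bounds below then extend the solution to $[0,T]$.

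\textbf{Energy estimate.} We test the $\varphi$-equation with $\mu_n$ and the $\textbf{u}$-equation with $\textbf{u}_n$. The convective term $(\textbf{u}_n\cdot\nabla\varphi_n,\mu_n)$ cancels with $(\mu_n\nabla\varphi_n,\textbf{u}_n)$, and $((\textbf{u}_n\cdot\nabla)\textbf{u}_n,\textbf{u}_n)=0$. This gives
\begin{equation*}
\frac{d}{dt}\Big(\tfrac12\|\nabla\varphi_n\|^2+\int_\Omega F(\varphi_n)+\tfrac12\|\textbf{u}_n\|^2\Big)+m_0\|\nabla\mu_n\|^2+2\eta_0\|D\textbf{u}_n\|^2\le (U,\mu_n).
\end{equation*}
The source term is the nonstandard part, because the mean of $\mu_n$ is not controlled by $\|\nabla\mu_n\|$ and mass is not conserved. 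To handle it, we first test with $1$ to get $\frac{d}{dt}\overline{\varphi_n}=\overline{U}$, so the mean of $\varphi_n$ is bounded. Next we bound the mean of $\mu_n$: its integral equals $\int F'(\varphi_n)$, and by [A2] $|F'|\le C_1F+C_2$, so $|\overline{\mu_n}|\le C(\int F(\varphi_n)+1)$. Then
\begin{equation*}
(U,\mu_n)\le \|U\|\big(C\|\nabla\mu_n\|+C|\overline{\mu_n}|\big)\le \tfrac{m_0}{2}\|\nabla\mu_n\|^2+C\|U\|^2+C\|U\|\Big(\int F(\varphi_n)+1\Big).
\end{equation*}
Gronwall's lemma applies because $\|U\|\in L^1(0,T)$. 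Together with $F\ge C_3s^2-C_4$, this yields uniform bounds for $\varphi_n\in L^\infty(V)$, $\textbf{u}_n\in L^\infty(\mathbb{G}_{div})\cap L^2(\mathbb{V}_{div})$, $\nabla\mu_n\in L^2(H)$, and hence $\mu_n\in L^2(V)$.

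\textbf{Higher regularity of $\varphi$ and time derivatives.} Elliptic regularity for $-\Delta\varphi_n=\mu_n-F'(\varphi_n)$ with Neumann data gives the $H^3$ bound: $F'(\varphi_n)\in L^2(V)$ because $F''$ has polynomial growth, $\varphi_n\in L^\infty(V)\subset L^\infty(L^p)$, and Sobolev embedding (with $d=3$ handled via $H^2$ interpolation) applies. We therefore obtain $\varphi_n\in L^2(H^3)$. The time-derivative bounds come by duality. The term $\textbf{u}_n\cdot\nabla\varphi_n$ lies in $L^2(V')$ by H\"older and Ladyzhenskaya/Sobolev, and $\mu_n\nabla\varphi_n$ lies in $L^2(\mathbb{V}_{div}')$ using $\mu_n\in L^2(V)$ and $\varphi_n\in L^\infty(V)$. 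The convective term $(\textbf{u}_n\cdot\nabla)\textbf{u}_n$ is in $L^2(\mathbb{V}_{div}')$ for $d=2$, and in $L^{4/3}$ for $d=3$; in the latter case the paper's regularity class must be read as in \cite{LKW}.

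\textbf{Passage to the limit.} Aubin--Lions gives strong convergence $\varphi_n\to\varphi$ in $L^2(H^2)\cap C([0,T];H)$ and $\textbf{u}_n\to\textbf{u}$ in $L^2(\mathbb{G}_{div})$. This suffices to pass to the limit in $m(\varphi_n)$, $\nu(\varphi_n)$ (by continuity and dominated convergence), in $F'(\varphi_n)$, and in all bilinear terms. The initial data are recovered from the weak continuity of the limit.

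\textbf{Main obstacle.} The main obstacle is the energy estimate with a non-conservative source, namely controlling $(U,\mu)$ through the mean of $\mu$. The argument above relies on $|F'|\le C_1F+C_2$ to do this; without it the estimate would fail.
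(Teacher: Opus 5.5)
Your argument is correct for $d=2$ and follows essentially the route the paper relies on. The paper gives no proof of its own and instead invokes \cite[Theorem 1]{LKW}; your Faedo--Galerkin/energy argument is the standard proof behind that result. Your treatment of $(U,\mu_n)$ is exactly what justifies the paper's remark that $U\in L^2(0,T;H)$ may replace $U\in L^2(0,T;L^\infty(\Omega))$: Poincar\'e--Wirtinger, the bound $|\overline{\mu_n}|\le C(\int_\Omega F(\varphi_n)+1)$ obtained from $|F'|\le C_1F+C_2$, and Gronwall with $\|U\|\in L^1(0,T)$. One small correction: at the discrete level the elliptic relation is $-\Delta\varphi_n=\mu_n-P_nF'(\varphi_n)$, not $\mu_n-F'(\varphi_n)$. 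This is harmless, because $P_n$ is a contraction on both $H$ and $V$ for the Neumann eigenbasis.

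Two points need to be made explicit.

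First, you use $m(\varphi_n)\ge m_0$ and $\eta(\varphi_n)\ge\eta_0$ for coercivity, and the corresponding upper bounds in your duality estimates. These require \textbf{[A1]} on all of $\mathbb{R}$, but it is stated only on $[-1,1]$. With a regular potential there is no maximum principle, so $\varphi_n$ and $\varphi$ do leave $[-1,1]$. You should say that you read \textbf{[A1]} as holding globally.

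Second, your claim that $d=3$ is ``handled via $H^2$ interpolation'' holds only under a growth restriction that \textbf{[A2]} (which allows any $r\ge 0$) does not supply.
\begin{itemize}
\item If $F$ grows like $|s|^{r+2}$ with $r>4$, the initial energy $\int_\Omega F(\varphi_0)$ can be infinite for $\varphi_0\in V\subset L^6(\Omega)$, and your Gronwall argument rests on it being finite.
\item Absorbing $\|\varphi\|_{L^{2r+2}}^{r+1}$ into $\|\Delta\varphi\|$ via Gagliardo--Nirenberg requires $r<4$.
\end{itemize}
So in three dimensions the proposition, and your proof, need such a restriction on $r$. This is in addition to the $L^{4/3}$-in-time regularity of $\textbf{u}'$ that you correctly flagged. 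None of this affects $d=2$, which is the only case used later in the paper.
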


\begin{remark}
    Note that in \cite[Theorem 1]{LKW} the assumption is that the source term $U \in L^2(0,T; L^\infty(\Omega))$. We can replace it with a comparatively weaker assumption,  $U \in L^2(0,T; H)$ and still  obtain the same result. 
\end{remark}

We derive strong solution results for the system under consideration, before deriving the continuous dependence estimates.  We set the following additional assumptions to achieve the goal.

\begin{enumerate}[font={\bfseries},label={A\arabic*.}]
    \item[{[A3]}] The mobility, $m \in C_b^2(\mathbb{R})$ and viscosity, $\eta \in C_b^1(\mathbb{R})$ with bounded derivatives. 
    \item[{[A4]}] The potential, $F\in C^3(\mathbb{R})$ and for some $r \in [1, \infty)$, satisfies
    \begin{align}\label{eqF4}
        |F'''(s)| &\leq C_0(1+ |s|^{r-1}),
    \end{align}
   for all $s\in \mathbb{R}$ and $C_0$ is a constant independent of $s$.
    \item[{[A5]}] The source term, $U \in L^2(0,T;V\cap L^\infty(\Omega))$.
\end{enumerate}
\medskip
\begin{prop}[\textit{Existence of a Strong Solution to the Local CHNS system}]\label{prop2}
Let $d =2$, assumptions \textbf{[A1]}- \textbf{[A5]}
   be satisfied, $\varphi_0 \in H^3(\Omega)$ and $\textbf{u}_0 \in \mathbb{V}_{div}$. Then there exist a strong solution $(\varphi, \textbf{u})$ of the system \eqref{eq8}-\eqref{eq14} which satisfies,
    \begin{align*}
    \varphi &\in L^\infty(0,T; H^3)\cap L^2(0,T;H^4)\cap H^1(0,T;V) \cap C([0,T];C^{0,\gamma}(\Bar{\Omega})),\\
    \mu &\in L^\infty(0,T;V)\cap L^2(0,T;H^3),\\
    \textbf{u}&\in L^\infty(0,T;\mathbb{V}_{div})\cap L^2(0,T;\textbf{H}^2) \cap H^1(0,T;\mathbb{G}_{div}).
    \end{align*}
Moreover,    
   \begin{align*}
    \varphi\left(x , 0 \right) &=\varphi_{0}(x),\hspace{.5cm} \textbf{u}(x,0)= \textbf{u}_0(x), \text{ a.e in} \; \; \Omega.\end{align*} 
\end{prop}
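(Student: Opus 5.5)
The plan is to obtain the strong solution as the limit of Faedo--Galerkin approximations. At each order we derive a chain of a priori estimates, uniform in the approximation parameter, and then pass to the limit with Aubin--Lions compactness. The starting point is the weak solution of Proposition \ref{prop1}, which supplies the baseline bounds $\varphi\in L^\infty(0,T;V)\cap L^2(0,T;H^3)$, $\mu\in L^2(0,T;V)$ and $\textbf{u}\in L^\infty(0,T;\mathbb{G}_{div})\cap L^2(0,T;\mathbb{V}_{div})$, together with the energy identity. Because $d=2$, we may use Ladyzhenskaya's inequality $\|f\|_{L^4}\le C\|f\|^{1/2}\|f\|_V^{1/2}$, Agmon's inequality and the embedding $V\hookrightarrow L^p$ for every $p<\infty$. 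Together with [A2] and [A4], these control $F'(\varphi)$, $F''(\varphi)$ and $F'''(\varphi)$ in every $L^p$ by $\|\varphi\|_V$.

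\textbf{Step 1 (velocity and chemical potential in $V$).} For the velocity, test \eqref{eq10} with $\mathbfcal{S}\textbf{u}$. Since $\nu$ is nonconstant, we rewrite $-2\nabla\cdot(\nu(\varphi)D\textbf{u})$ as $\nu(\varphi)\mathbfcal{S}\textbf{u}$ plus a term involving $\nu'(\varphi)\nabla\varphi\, D\textbf{u}$, and control that term using $\nabla\varphi\in L^\infty(0,T;L^4)$ together with Gagliardo--Nirenberg. The Korteweg term $\mu\nabla\varphi$ is equivalent to $-\varphi\nabla\mu$ modulo a gradient, so it is bounded by $\|\varphi\|_{L^\infty}\|\nabla\mu\|$. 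For the chemical potential, differentiate \eqref{eq8}--\eqref{eq9} in time, or equivalently test \eqref{eq8} with $\partial_t\mu$. This yields $\frac{d}{dt}\|\sqrt{m(\varphi)}\nabla\mu\|^2$ plus $\|\nabla\varphi'\|^2$ on the left-hand side, using that $\mu'=-\Delta\varphi'+F''(\varphi)\varphi'$. The convective term $\textbf{u}\cdot\nabla\varphi$ and the control $U\in L^2(0,T;V)$ enter on the right. Here we need $\varphi_0\in H^3$ so that $\mu(0)\in V$. A Gronwall argument on the sum of these two estimates gives $\textbf{u}\in L^\infty(0,T;\mathbb{V}_{div})\cap L^2(0,T;\textbf{H}^2)\cap H^1(0,T;\mathbb{G}_{div})$, $\mu\in L^\infty(0,T;V)$ and $\varphi\in H^1(0,T;V)$.

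\textbf{Step 2 (elliptic bootstrapping).} From $-\Delta\varphi=\mu-F'(\varphi)$ with Neumann data, elliptic regularity gives $\varphi\in L^\infty(0,T;H^3)$ once $\nabla F'(\varphi)=F''(\varphi)\nabla\varphi\in L^\infty(0,T;H)$, which holds by [A2]. Next, view \eqref{eq8} at each time as the elliptic problem $\nabla\cdot(m(\varphi)\nabla\mu)=\varphi'+\textbf{u}\cdot\nabla\varphi-U$, whose right-hand side lies in $L^2(0,T;V)$. Expanding the divergence, using $m\in C^2_b$ from [A3] and $\varphi\in L^\infty(0,T;H^3)\subset L^\infty(0,T;W^{1,\infty})$, we obtain $\mu\in L^2(0,T;H^3)$. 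Feeding this back into \eqref{eq9} and bounding $D^2F'(\varphi)$ with [A4] yields $\varphi\in L^2(0,T;H^4)$.

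\textbf{Step 3 (continuity in time and passage to the limit).} Since $\varphi\in L^\infty(0,T;H^3)\cap H^1(0,T;V)$, interpolation gives $\varphi\in C([0,T];H^{2})$, or Aubin--Lions gives continuity into $H^{3-\epsilon}$. In two dimensions $H^2\hookrightarrow C^{0,\gamma}(\bar\Omega)$, so $\varphi\in C([0,T];C^{0,\gamma}(\bar\Omega))$. The uniform bounds and compactness allow us to pass to the limit in the nonlinear terms, and the initial conditions are recovered from the time continuity.

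The main obstacle is Step 1, and specifically the term involving $\varphi'$ tested against $F''(\varphi)\varphi'$. Together with the variable mobility terms $m'(\varphi)\nabla\varphi\cdot\nabla\mu$ that appear when $\mu$ is differentiated in time, this must be absorbed into the dissipation. To handle it, I would estimate $\|F''(\varphi)\|_{L^4}\|\varphi'\|_{L^4}\|\varphi'\|$ and then apply Ladyzhenskaya's inequality together with Young's inequality. I would also exploit the conservation-type identity $\overline{\varphi'}=\overline{U}$, with $\overline{f}$ denoting the spatial mean, to control $\|\varphi'\|$ by $\|\nabla\varphi'\|+|\overline{U}|$, so that the resulting differential inequality closes by Gronwall.
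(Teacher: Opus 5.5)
The paper gives no self-contained proof here. It defers to \cite[Theorem 2]{LKW} and only adds an Aubin--Lions remark for the H\"older continuity. Your architecture is the standard one: Galerkin, a coupled estimate for $\|\nabla\textbf{u}\|^2+\|\sqrt{m(\varphi)}\nabla\mu\|^2$, elliptic bootstrapping, then compactness. Steps 2--3 are fine, and the Lions--Magenes interpolation $L^2(0,T;H^3)\cap H^1(0,T;V)\subset C([0,T];H^2)$ is a clean replacement for the paper's Aubin--Lions step; it even gives continuity into $H^2$. However, Step 1, which carries all the work, does not close as written, for two concrete reasons.

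The first is the variable viscosity. Since $\nabla\cdot\textbf{u}=0$, you have $-2\nabla\cdot(\nu(\varphi)D\textbf{u})=-\nu(\varphi)\Delta\textbf{u}-2\nu'(\varphi)D\textbf{u}\,\nabla\varphi$. On a bounded domain, however, $-\Delta\textbf{u}=\mathbfcal{S}\textbf{u}+\nabla\pi_{\textbf{u}}$ with a nontrivial Stokes pressure $\pi_{\textbf{u}}$. Testing with $\mathbfcal{S}\textbf{u}$ therefore leaves $(\nu(\varphi)\nabla\pi_{\textbf{u}},\mathbfcal{S}\textbf{u})=-(\nu'(\varphi)\pi_{\textbf{u}}\nabla\varphi,\mathbfcal{S}\textbf{u})$, which your decomposition drops.
\begin{itemize}
\item Using only $\|\pi_{\textbf{u}}\|_V\le C\|\mathbfcal{S}\textbf{u}\|$ gives $C\|\nabla\varphi\|_{L^4}\|\mathbfcal{S}\textbf{u}\|^2$, which cannot be absorbed.
\item You also need the interpolation bound $\|\pi_{\textbf{u}}\|\le C\|\nabla\textbf{u}\|^{1/2}\|\mathbfcal{S}\textbf{u}\|^{1/2}$ (mean-zero normalisation). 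This yields $\epsilon\|\mathbfcal{S}\textbf{u}\|^2+C\|\nabla\varphi\|_{L^4}^8\|\nabla\textbf{u}\|^2$.
\item Alternatively, test with $\textbf{u}'$ and invoke $\textbf{H}^2$-regularity for the Stokes problem with variable viscosity.
\end{itemize}
A related point: $\nabla\varphi\in L^\infty(0,T;L^4)$ is not yet available at this stage. What you do have is $\varphi\in L^\infty(0,T;V)\cap L^2(0,T;H^3)\subset L^4(0,T;H^2)$. That makes $\|\nabla\varphi\|_{L^4}^8$ integrable in time, which is exactly what Gronwall needs.

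The second is the chemical-potential estimate. Differentiating $\|\sqrt{m(\varphi)}\nabla\mu\|^2$ produces $\frac12\int_\Omega m'(\varphi)\varphi'|\nabla\mu|^2$. It is not a term $m'(\varphi)\nabla\varphi\cdot\nabla\mu$; that one only appears in the elliptic problem for $\mu$. This term is cubic, bounded by $C\|\varphi'\|\|\nabla\mu\|\|\mu\|_{H^2}$. So already inside Step 1 you must use the $H^2$ elliptic estimate for $\mu$ from $\nabla\cdot(m(\varphi)\nabla\mu)=\varphi'+\textbf{u}\cdot\nabla\varphi-U$, and this produces $\|\varphi'\|^2\|\nabla\mu\|$.

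Your proposed remedy absorbs neither this term nor the term $C\|\varphi'\|^2$ coming from $(F''(\varphi)\varphi',\varphi')$. Poincar\'e--Wirtinger with $\overline{\varphi'}=\overline{U}$ only turns $C\|\varphi'\|^2$ into $C'\|\nabla\varphi'\|^2$ plus a harmless term. The constant $C'$ is not small, and $\|\nabla\varphi'\|^2$ is the dissipation, not the Gronwall quantity.

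The missing ingredient is the interpolation $\|\varphi'\|^2\le\|\varphi'\|_{V'}\|\varphi'\|_V$, combined with $\varphi'\in L^2(0,T;V')$ from Proposition~\ref{prop1}. It gives $|(F''(\varphi)\varphi',\varphi')|\le\epsilon\|\varphi'\|_V^2+C\|\varphi'\|_{V'}^2$ and $\|\varphi'\|^2\|\nabla\mu\|\le\epsilon\|\varphi'\|_V^2+C\|\varphi'\|_{V'}^2\|\nabla\mu\|^2$. Moreover $\|\varphi'\|_V^2\le 2\|\nabla\varphi'\|^2+\|\varphi'\|_{V'}^2$, so the $\epsilon$-terms are absorbed. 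The result is a linear Gronwall inequality with a coefficient in $L^1(0,T)$. The remaining lower-order pieces are handled the same way, using the time-integrability of the weak solution. With these two repairs your outline does prove the proposition.
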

The proof of the above proposition follows by similar arguments as in  \cite[Theorem 2]{LKW}. To prove the H\"older continuity of $\varphi$ in variable $x$ is obtained by an application of Aubin-Lions lemma, $L^\infty(0,T;H^2)\cap H'(0,T;H) \hookrightarrow \hookrightarrow C([0,T];C^{0,\gamma}(\Bar{\Omega}))$.
\medskip

\begin{prop}[Continuous dependence of the  strong solution on initial data and mass source term]\label{prop3}
Let $(\varphi_i, \textbf{u}_i)$, $i=1,2$ be two strong solutions of the system \eqref{eq8}-\eqref{eq14} corresponding to initial data $(\varphi_{0i}, \textbf{u}_{0i})$ and source term $U_i$. Assume $m$ and $\eta$ are Lipschitz continuous. Additionally, $F$ satisfy, 
\begin{align}\label{F5}
    |F'(s_1)-F'(s_2)| &\leq C_5(1+|s_1|^r+|s_2|^r)|s_1-s_2|,\hspace{.5cm} \forall s_1, s_2\in \mathbb{R}.
\end{align}
for some $r>0$ and a constant $C_5>0$. Then there exists a constant $C>0$ such that the following estimate holds true.
\begin{align}\label{cd1}
    \|\varphi_1-\varphi_2\|_{L^\infty(0,T;H)\cap L^2(0,T;H^2)}^2 &+\|\textbf{u}_1-\textbf{u}_2\|_{L^\infty(0,T;\mathbb{G}_{div})\cap L^2(0,T;\mathbb{V}_{div})}^2  + \|\mu_1-\mu_2\|_{L^2(0,T;H)}^2 \nonumber\\&\leq C\big( \|\textbf{u}_{01}-\textbf{u}_{02}\|^2 +\|\varphi_{01}-\varphi_{02}\|^2 +\|U_1-U_2\|_{L^2(0,T;H)}^2 \big).
\end{align}
Additionally, we have the following continuous dependence estimates.
\begin{align}\label{cd2}
    \|\varphi_1-\varphi_2\|_{L^\infty(0,T;V)\cap L^2(0,T;H^3)}^2 + \|\varphi_1'-\varphi_2'\|_{L^2(0,T;V')}^2 \leq C\big( \|\textbf{u}_{01}-\textbf{u}_{02}\|^2 +\|\varphi_{01}-\varphi_{02}\|_{V}^2 +\|U_1-U_2\|_{L^2(0,T;H)}^2 \big).
\end{align}
Furthermore, if we assume $F\in C^4(\mathbb{R})$ then,
\begin{align}\label{cd3}
     \|\varphi_1-\varphi_2\|_{L^\infty(0,T;H^2)\cap L^2(0,T; H^4)}^2  \leq C\big( \|\textbf{u}_{01}-\textbf{u}_{02}\|^2 +\|\varphi_{01}-\varphi_{02}\|_{H^2}^2 +\|U_1-U_2\|_{L^2(0,T;H)}^2 \big).
\end{align}
\end{prop}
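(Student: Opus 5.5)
We argue by energy estimates on the difference system. Set $\varphi=\varphi_1-\varphi_2$, $\textbf{u}=\textbf{u}_1-\textbf{u}_2$, $\mu=\mu_1-\mu_2$, $U=U_1-U_2$. Subtracting the equations for the two strong solutions gives
\begin{align*}
\varphi' + \textbf{u}\cdot\nabla\varphi_1 + \textbf{u}_2\cdot\nabla\varphi &= \nabla\cdot\big(m(\varphi_1)\nabla\mu + (m(\varphi_1)-m(\varphi_2))\nabla\mu_2\big) + U,\\
\mu &= -\Delta\varphi + F'(\varphi_1)-F'(\varphi_2),
\end{align*}
together with the analogous velocity equation. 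Its viscous term is $2\nabla\cdot(\nu(\varphi_1)D\textbf{u}) + 2\nabla\cdot((\nu(\varphi_1)-\nu(\varphi_2))D\textbf{u}_2)$. Its convective term is $(\textbf{u}\cdot\nabla)\textbf{u}_1+(\textbf{u}_2\cdot\nabla)\textbf{u}$. Its Korteweg force is $\mu\nabla\varphi_1+\mu_2\nabla\varphi$. Throughout we use that, by Proposition \ref{prop2}, $\varphi_i\in L^\infty(0,T;H^3)$, which gives uniform $L^\infty$ bounds on $\varphi_i,\nabla\varphi_i$. We also use $\mu_i\in L^\infty(0,T;V)\cap L^2(0,T;H^3)$ and $\textbf{u}_i\in L^\infty(0,T;\mathbb{V}_{div})\cap L^2(0,T;\textbf{H}^2)$. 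With these bounds, $F'(\varphi_1)-F'(\varphi_2)$ and $m(\varphi_1)-m(\varphi_2)$ are controlled by $C|\varphi|$, thanks to \eqref{F5} and the Lipschitz continuity of $m,\nu$.

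\textbf{Step 1: estimate \eqref{cd1}.} Test the $\varphi$-equation with $\varphi$ and with $-\Delta\varphi$ (or equivalently with $\mu$), and test the velocity equation with $\textbf{u}$. The mobility term produces $m_0\|\nabla\mu\|^2$, and testing $\mu=-\Delta\varphi+\dots$ against $\Delta^2\varphi$-type quantities yields $\|\Delta\varphi\|$ control. A cleaner route is to test with $\varphi$ and use $\mu$ to obtain $(m\nabla\mu,\nabla\varphi)\approx m\|\Delta\varphi\|^2$ plus terms involving $\nabla m(\varphi_1)$; this requires $m\in C^2_b$ and $\nabla\varphi_1\in L^\infty$. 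All the cross terms are handled with H\"older, Ladyzhenskaya and Agmon inequalities, using the $L^\infty$ bounds above, and Young's inequality absorbs the dissipative parts. Examples are $(\textbf{u}\cdot\nabla\varphi_1,\varphi)$, $((\nu(\varphi_1)-\nu(\varphi_2))D\textbf{u}_2,D\textbf{u})\le C\|\varphi\|_{L^\infty}\|D\textbf{u}_2\|\|\nabla\textbf{u}\|$, and $(\mu_2\nabla\varphi,\textbf{u})$. In particular $\|\varphi\|_{L^\infty}\le C\|\varphi\|^{1/2}\|\varphi\|_{H^2}^{1/2}$, which gives absorbable powers. Gronwall's lemma then yields \eqref{cd1}, with the coefficients integrable in time because $\textbf{u}_2\in L^2(0,T;\textbf{H}^2)$ and $\mu_2\in L^2(0,T;H^3)$. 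The bound on $\|\mu\|_{L^2(0,T;H)}$ is read off from the equation for $\mu$.

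\textbf{Step 2: estimate \eqref{cd2}.} Test the $\varphi$-equation with $-\Delta\varphi$, or equivalently work with $\nabla\varphi$. The dissipation now gives $\|\nabla\Delta\varphi\|^2$ in $L^2_t$, hence $\varphi\in L^2(0,T;H^3)$. The terms that require care involve $\nabla(F'(\varphi_1)-F'(\varphi_2))$. Here $F''$ bounded on the range of $\varphi_i$, together with the $H^3$ bounds on $\varphi_i$, gives $\|\nabla(F'(\varphi_1)-F'(\varphi_2))\|\le C\|\varphi\|_V$. The velocity quantities already controlled in Step 1 enter as known $L^2_t$ data. The bound on $\varphi'$ in $L^2(0,T;V')$ follows by comparison in the equation.

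\textbf{Step 3: estimate \eqref{cd3}.} Test with $\Delta^2\varphi$ to obtain $L^\infty_tH^2\cap L^2_tH^4$. At this order $\Delta(F'(\varphi_1)-F'(\varphi_2))$ appears. It contains $F'''(\varphi_i)|\nabla\varphi_i|^2\varphi$-type terms and $F''$ times second derivatives of $\varphi$, and is bounded via $F\in C^4$ and the strong regularity of $\varphi_i$. The convective term $\Delta(\textbf{u}\cdot\nabla\varphi_1)$ would require $\textbf{u}\in H^2$. To avoid this, we move one derivative onto the test function, estimating $(\nabla(\textbf{u}\cdot\nabla\varphi_1),\nabla\Delta\varphi)$ with only $\textbf{u}\in L^2_t\mathbb{V}_{div}$ and $\varphi_1\in L^\infty_tH^3$. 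We expect this top-order step to be the main obstacle: we must bound the nonlinear mobility and potential differences at fourth order using only $L^2_tH$ control of $U$ and $H^1$ control of $\textbf{u}$. Gronwall's lemma, combined with the bounds from Steps 1 and 2, then closes the estimate.
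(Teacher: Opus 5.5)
Your proposal is correct and follows essentially the same route as the paper: energy estimates on the difference system tested with $-\Delta\varphi$ and then $\Delta^2\varphi$, closed by Gronwall's lemma using the uniform strong-solution bounds, with $\varphi'\in L^2(0,T;V')$ obtained by comparison. The differences are minor. The paper takes \eqref{cd1} directly from \cite[Theorem 3]{LKW} instead of re-deriving it by testing with $\varphi$. In Step 3, no integration by parts is needed for the transport terms, since $|(\textbf{u}\cdot\nabla\varphi_2,\Delta^2\varphi)|\le\|\textbf{u}\|_{L^4}\|\nabla\varphi_2\|_{L^4}\|\Delta^2\varphi\|$ is absorbed directly into $m_0\|\Delta^2\varphi\|^2$ using only $\textbf{u}\in L^2(0,T;\mathbb{V}_{div})$.
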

\begin{proof}
    The first part of the proposition follows by similar arguments as in \cite[Theorem 3]{LKW}. We will prove estimates \eqref{cd2} and \eqref{cd3}. Let us  denote $\varphi = \varphi_1-\varphi_2$, $\textbf{u} = \textbf{u}_1-\textbf{u}_2$, $\varphi_{0}=\varphi_{01}-\varphi_{02}$ and $\textbf{u}_0 = \textbf{u}_{01}-\textbf{u}_{02}$. Then $(\varphi, \textbf{u})$ solves the difference system,
\begin{align}
    \varphi'+\textbf{u}_1\cdot\nabla\varphi +\textbf{u}\cdot\nabla\varphi_2 &= \nabla\cdot\big(m(\varphi_1)\nabla\Tilde{\mu}\big) + \nabla\cdot\big((m(\varphi_1)-m(\varphi_2))\nabla\mu_2\big)+ U_1-U_2,\label{eq1.7}\\
    \Tilde{\mu} &= -\Delta\varphi+ F'(\varphi_1)-F'(\varphi_2),\label{eq1.8}\\
    \textbf{u}'+(\textbf{u}_1\cdot\nabla)\textbf{u}+(\textbf{u}\cdot\nabla)\textbf{u}_2 &+ \nabla\tilde{q} -2\nabla\cdot(\eta(\varphi_1)D\textbf{u})-2\nabla\cdot((\eta(\varphi_1)-\eta(\varphi_2))D\textbf{u}_2) = \mu_1\nabla\varphi + \Tilde{\mu}\nabla\varphi_2,\label{eq1.9}
\end{align}
\\where $\mu_i =(-\Delta\varphi_i+ F'(\varphi_i))$, $i=1,2$. First we test \eqref{eq1.7} with $-\Delta\varphi$, which yields,
\begin{align}
  \frac{1}{2}\frac{d}{dt}\|\nabla\varphi\|^2 + (\textbf{u}\cdot\nabla\varphi_2, -\Delta\varphi) &= -\big(m(\varphi_1)\nabla\Tilde{\mu}, \nabla(-\Delta\varphi)\big) - \big((m(\varphi_1)-m(\varphi_2))\nabla\mu_2, \nabla(-\Delta\varphi)\big)\nonumber \\&\hspace{.5cm}+ (U_1-U_2,-\Delta\varphi)  
\end{align}
Estimating the terms one by one, using H\"olders, Gagliardo-Nirenberg, Ladyzhenskaya, Poincar\'e, Young's inequalities, and assumption [A1], we obtain,
\begin{align*}
   |(\textbf{u}\cdot\nabla\varphi_2, -\Delta\varphi)| &\leq \|\textbf{u}\|_{L^4}\|\nabla\varphi_2\|_{L^4}\|\Delta\varphi\| 
   \leq \|\nabla\textbf{u}\|\|\varphi_2\|_{H^2}\|\Delta\varphi\| \\
   &\leq \|\varphi_2\|_{H^2}^2\|\nabla\textbf{u}\|^2 + C\|\Delta\varphi\|^2.\\
    \big(m(\varphi_1)\nabla(-\Delta\varphi), \nabla(-\Delta\varphi)\big) &\geq m_0\|\nabla^3\varphi\|^2.\\
    |\big(m(\varphi_1)\nabla(F'(\varphi_1)-F'(\varphi_2)), \nabla(-\Delta\varphi)\big)| &=  |\big(m(\varphi_1)((F''(\varphi_1)-F''(\varphi_2))\nabla\varphi_1 + F''(\varphi_2)\nabla\varphi), \nabla(-\Delta\varphi)\big)| \\
    &\leq m_1\|F'''(\Hat{\varphi})\|_{L^\infty}\|\varphi\|_{L^4}\|\nabla\varphi_1\|_{L^4}\|\nabla^3\varphi\| + m_1\|F''(\varphi_2)\|_{L^\infty}\|\nabla\varphi\|\|\nabla^3\varphi\|,  \\
    &\leq \frac{m_0}{4}\|\nabla^3\varphi\|^2 + C\|\varphi_1\|_{H^2}^2\|\varphi\|\|\varphi\|_{V} +  C\|\nabla\varphi\|^2 \\
    &\leq \frac{m_0}{4}\|\nabla^3\varphi\|^2 + C( 1+ \|\varphi_1\|_{H^2}^2)\|\varphi\|_{V}^2
\end{align*}
The last inequality is obtained by applying assumptions on the potential \eqref{F1}, \eqref{eqF4} and the uniform global bound for the strong solution. Furthermore, we use Agmon's inequality and Sobolev embeddings to estimate the following. 
\begin{align*}
    |\big((m(\varphi_1)-m(\varphi_2))\nabla\mu_2, \nabla(-\Delta\varphi)\big)| & \leq C\|\varphi\|_{L^\infty}\|\nabla\mu_2\|\|\nabla^3\varphi\|\\
    &\leq C\|\varphi\|_{H^2}^{1/2}\|\varphi\|^{1/2}\|\nabla\mu_2\|\|\nabla^3\varphi\|\\
    &\leq \frac{m_0}{4}\|\nabla^3\varphi\|^2 + C\|\nabla\mu_2\|^2\|\varphi\|_{H^2}\|\varphi\| \\&\leq \frac{m_0}{4}\|\nabla^3\varphi\|^2 + C\|\nabla\mu_2\|^2\|\varphi\|_{H^2}^2 ,\\
     |(U_1-U_2,-\Delta\varphi)| &\leq \|U_1-U_2\| \|\Delta\varphi\| \leq \|U_1-U_2\|^2 + C\|\Delta\varphi\|^2.
\end{align*}
By combining all above estimates, we deduce that,
\begin{align*}
  \frac{1}{2}\frac{d}{dt}\|\nabla\varphi\|^2 +  \frac{m_0}{2}\|\nabla^3\varphi\|^2 &\leq \|\|\varphi_2\|_{H^2}^2\|\nabla\textbf{u}\|^2 + C(1+\|\varphi_1\|_{H^2}^2 )\|\varphi\|_{V}^2  + C( 1+\|\nabla\mu_2\|^2)\|\varphi\|_{H^2}^2 + \|U_1-U_2\|^2 
\end{align*}
By exploiting Gronwall's lemma and the estimate \eqref{cd1} we get,
\begin{align*}
     \sup_{t\in[0,T]}\|\varphi\|_{V}^2 +\int\limits_{0}^{T}\|\varphi\|_{H^3}^2 \leq C\Big( \|\textbf{u}_{01}-\textbf{u}_{02}\|^2 +\|\varphi_{01}-\varphi_{02}\|_{V}^2 +\int\limits_{0}^{T}\|U_1-U_2\|^2 \Big).
\end{align*}
Note that, the $H^3$ estimate of $\varphi$ in the above inequality is obtained by the following observation: for the elliptic estimate $\|\varphi\|_{H^2} \leq C(\|\Delta\varphi\| + \|\varphi\|)$ and by Poincar\'e $\|\Delta\varphi\| \leq C\|\nabla(\Delta\varphi)\|$. By comparison we have, $\|\nabla^3\varphi\| \leq \|\nabla(\Delta\varphi)\|$ for $d=2$, consequently, $\|\varphi\|_{H^3} \leq C(\|\nabla(\Delta\varphi)\| + \|\varphi\|)$.
\\Moreover, consider the following weak formulation obtained by testing \eqref{eq1.7} with $\chi \in V$.
\begin{align*}
    \langle\varphi', \chi\rangle + (\textbf{u}_1\cdot\nabla\varphi, \chi) +(\textbf{u}\cdot\nabla\varphi_2, \chi) &= -\big(m(\varphi_1)\nabla\Tilde{\mu}, \nabla\chi \big) - \big((m(\varphi_1)-m(\varphi_2))\nabla\mu_2, \nabla\chi \big)+ (U_1-U_2, \chi)
\end{align*}
\begin{align*}
    |\langle\varphi', \chi\rangle| &\leq C\Big[\|\textbf{u}_1\|\|\varphi\| + \|\textbf{u}\|\|\varphi_2\| +  \|\nabla\Tilde{\mu}\| + \|\varphi\|\|\nabla\mu_2\| \Big]\|\nabla\chi\| + \|U_1-U_2\|\|\chi\|
\end{align*}
Note that $\Bar{\mu} = -\Delta \varphi + F'(\varphi^h) -F'(\Bar{\varphi})$ and $\|\Bar{\mu}\| \leq C( \|\nabla^3\varphi\| + \|\varphi\|_{V} + \|\varphi_1\|_{H^2}\|\varphi\|_{V}$ by similar estimates. Further, Integrating  above inequality over $[0,T]$ and applying the uniform estimates for the strong solution, we get, 
\begin{equation*}
    \|\varphi\|_{H^1(0,T;V')} \leq C\big( \|\textbf{u}_{01}-\textbf{u}_{02}\|^2 +\|\varphi_{01}-\varphi_{02}\|_{V}^2 +\|U_1-U_2\|_{L^2(0,T;H)}^2 \big).
\end{equation*}
For further regularity, we assume $F \in C^4(\mathbb{R})$. Test \eqref{eq1.7} with $\Delta^2\varphi$, we get,
\begin{align}\label{eq1.11}
  \frac{1}{2}\frac{d}{dt}\|\Delta\varphi\|^2 + (\textbf{u}_1\cdot\nabla\varphi, \Delta^2\varphi)+ (\textbf{u}\cdot\nabla\varphi_2, \Delta^2\varphi) &= \big(\nabla\cdot(m(\varphi_1)\nabla(-\Delta\varphi+ F'(\varphi_1)-F'(\varphi_2)),  \Delta^2\varphi\big) \nonumber\\&+ \big(\nabla\cdot((m(\varphi_1)-m(\varphi_2))\nabla\mu_2),  \Delta^2\varphi\big)+ (U_1-U_2, \Delta^2\varphi)  
\end{align}
By estimating each term individually through H\"older's, Gagliardo-Nirenberg, Young's and Poincar\'e inequalities, we obtain,
\begin{align*}
     |(\textbf{u}\cdot\nabla\varphi_2, \Delta^2\varphi)| &\leq \|\textbf{u}\|_{L^4}\|\nabla\varphi_2\|_{L^4}\|\Delta^2\varphi\| \leq \|\nabla\textbf{u}\|\|\varphi_2\|_{H^2}\|\Delta^2\varphi\|\\
     &\leq \epsilon\|\Delta^2\varphi\|^2 + C\|\varphi_2\|_{H^2}^2\|\nabla\textbf{u}\|^2
\end{align*}
Observe that,
\begin{align*}
    \big(\nabla\cdot(m(\varphi_1)\nabla(-\Delta\varphi+ F'(\varphi_1)-F'(\varphi_2))), \Delta^2\varphi\big) &= \big(m'(\varphi_1)\nabla\varphi_1\nabla(-\Delta\varphi+ F'(\varphi_1)-F'(\varphi_2)), \Delta^2\varphi\big) \\&\hspace{.5cm}+ \big(m(\varphi_1)\Delta(-\Delta\varphi+ F'(\varphi_1)-F'(\varphi_2)), \Delta^2\varphi\big),
\end{align*}
Each term on the right-hand side is estimated using H\"older's, generalised Gagliardo-Nirenberg, Sobolev and Young's inequalities, as detailed below. 
\begin{align*}
    |\big(m'(\varphi_1)\nabla\varphi_1\nabla(-\Delta\varphi), \Delta^2\varphi\big)| &\leq C\|\nabla\varphi_1\|_{L^\infty}\|\nabla^3\varphi\|\|\Delta^2\varphi\| \leq C\|\varphi_1\|_{H^3}\|\nabla^3\varphi\|\|\Delta^2\varphi\|\\& \leq \epsilon\|\Delta^2\varphi\|^2 + C\|\varphi_1\|_{H^3}^2\|\nabla^3\varphi\|^2 \\
    |\big(m'(\varphi_1)\nabla\varphi_1\nabla(F'(\varphi_1)-F'(\varphi_2)), \Delta^2\varphi\big)| &=|\big(m'(\varphi_1)\nabla\varphi_1(F'''(\Hat{\varphi})\varphi\nabla\varphi_1 + F''(\varphi_2)\nabla\varphi), \Delta^2\varphi\big)| 
    \\&\leq C\|\nabla\varphi_1\|_{L^8}^2\|\varphi\|_{L^4}\|\Delta^2\varphi\| + C\|\nabla\varphi_1\|_{L^4}\|\nabla\varphi\|_{L^4}\|\Delta^2\varphi\|\\
    &\leq C\|\varphi_1\|_{H^2}^2\|\varphi\|_{V}\|\Delta^2\varphi\| + \|\varphi_1\|_{H^2}\|\varphi\|_{H^2}\|\Delta^2\varphi\|\\
     &\leq 2\epsilon\|\Delta^2\varphi\|^2 + C\|\varphi_1\|_{H^2}^4\|\varphi\|_{V}^2 + \|\varphi_1\|_{H^2}^2\|\varphi\|_{H^2}^2 
\end{align*}
The second last inequality is obtained by applying Sobolev embeddings. 
Note that,  
\begin{equation*}
|\big(m(\varphi_1)\Delta^2\varphi, \Delta^2\varphi\big)| \geq m_0\|\Delta^2\varphi\|^2
\end{equation*}
and, 
\begin{align*}
\big(m(\varphi_1)\Delta(F'(\varphi_1)&-F'(\varphi_2)), \Delta^2\varphi\big)| \leq |\big(m(\varphi_1)F^{(4)}(\Hat{\varphi})\varphi(\nabla\varphi_1)^2, \Delta^2\varphi\big)|+  |\big(m(\varphi_1)F'''(\varphi_2)\nabla\varphi_1\nabla\varphi, \Delta^2\varphi\big)| \\&\hspace{.5cm}+ |\big(m(\varphi_1)F'''(\Tilde{\varphi})\varphi\Delta\varphi_1, \Delta^2\varphi\big)|+ |\big(m(\varphi_1)F'''(\varphi_2)\nabla\varphi_2\nabla\varphi, \Delta^2\varphi\big)| + |\big(m(\varphi_1)F''(\varphi_2)\Delta\varphi, \Delta^2\varphi\big)|
\end{align*}
Estimating terms on right hand side,
\begin{align*} 
    |\big(m(\varphi_1)F^{(4)}(\Hat{\varphi})(\nabla\varphi_1)^2\varphi, \Delta^2\varphi\big)| &\leq C\|\nabla\varphi_1\|_{L^8}^2\|\varphi\|_{L^4}\|\Delta^2\varphi\| 
    \leq C\|\varphi_1\|_{H^2}^{2}\|\varphi\|_{V}\|\Delta^2\varphi\| \\&\leq  \epsilon\|\Delta^2\varphi\|^2 + C\|\varphi_1\|_{H^2}^{4}\|\varphi\|_{V}^2 \\
    |\big(m(\varphi_1)F'''(\varphi_2)\nabla\varphi_1\nabla\varphi, \Delta^2\varphi\big)| &\leq C\|\nabla\varphi_1\|_{L^4}\|\nabla\varphi\|_{L^4}\|\Delta^2\varphi\| \leq C\|\varphi_1\|_{H^2}\|\varphi\|_{H^2}\|\Delta^2\varphi\|\\
    &\leq \epsilon\|\Delta^2\varphi\|^2 + C\|\varphi_1\|_{H^2}^2\|\varphi\|_{H^2}^2 \\
    |\big(m(\varphi_1)F'''(\Tilde{\varphi})\varphi\Delta\varphi_1, \Delta^2\varphi\big)| &\leq \|\varphi\|_{L^\infty}\|\Delta\varphi_1\|\|\Delta^2\varphi\| \\
    &\leq \epsilon\|\Delta^2\varphi\|^2 + \|\Delta\varphi_1\|^2\|\varphi\|_{H^2}^2\\
    |\big(m(\varphi_1)F'''(\varphi_2)\nabla\varphi_2\nabla\varphi, \Delta^2\varphi\big)| &\leq C\|\nabla\varphi_2\|_{L^4}\|\nabla\varphi\|_{L^4}\|\Delta^2\varphi\| \leq C\|\varphi_2\|_{H^2}\|\varphi\|_{H^2}\|\Delta^2\varphi\|\\
    &\leq \epsilon\|\Delta^2\varphi\|^2 + C\|\varphi_2\|_{H^2}^2\|\varphi\|_{H^2}^2\\
    |\big(m(\varphi_1)F''(\varphi_2)\Delta\varphi, \Delta^2\varphi\big)| &\leq C\|\Delta\varphi\|\| \Delta^2\varphi\| \leq \epsilon\|\Delta^2\varphi\|^2+ C\|\varphi\|_{H^2}^2
\end{align*}
We have used H\"olders, Gagliardo-Nirenberg, Young's, sobolev inequalities and \eqref{F1}, \eqref{eqF4}, in above calculations.
Furthermore, the last term of \eqref{eq1.11} can be estimated as follows.
\begin{align*}
   |\big(\nabla\cdot((m(\varphi_1)-m(\varphi_2))\nabla\mu_2), \Delta^2\varphi\big)| 
   &= |\big(\nabla(m(\varphi_1)-m(\varphi_2))\nabla\mu_2 + (m(\varphi_1)-m(\varphi_2))\Delta\mu_2 , \Delta^2\varphi\big)| \\&\leq  |\big(m''(\Hat{\varphi})\varphi\nabla\varphi_1\nabla\mu_2, \Delta^2\varphi\big)|+ |\big(m'(\varphi_2)\nabla\varphi\nabla\mu_2, \Delta^2\varphi\big)|+ |\big(m'(\Hat{\varphi})\varphi\Delta\mu_2, \Delta^2\varphi\big)|
\end{align*}
\begin{align*}
    |\big(m''(\Hat{\varphi})\varphi\nabla\varphi_1\nabla\mu_2, \Delta^2\varphi\big)| &\leq \|\nabla\varphi_1\|_{L^8}\|\varphi\|_{L^8}\|\nabla\mu_2\|_{L^4}\|\Delta^2\varphi\| \leq C\|\varphi_1\|_{H^2}\|\varphi\|_{V}\|\mu_2\|_{H^2}\|\Delta^2\varphi\|\\
    &\leq \epsilon\|\Delta^2\varphi\|^2 + C\|\varphi_1\|_{H^2}^2\|\mu_2\|_{H^2}^2\|\varphi\|_{V}^2\\
    |\big(m'(\varphi_2)\nabla\varphi\nabla\mu_2, \Delta^2\varphi\big)| &\leq C\|\nabla\varphi\|_{L^4}\|\nabla\mu_2\|_{L^4}\|\Delta^2\varphi\| \leq C\|\varphi\|_{H^2}\|\mu_2\|_{H^2}\|\Delta^2\varphi\|\\
     &\leq \epsilon\|\Delta^2\varphi\|^2 + \|\mu_2\|_{H^2}^2\|\varphi\|_{H^2}^2
\end{align*}
\begin{align*}
     |\big(m'(\Hat{\varphi})\varphi\Delta\mu_2, \Delta^2\varphi\big)| &\leq \|\varphi\|_{L^\infty}\|\Delta\mu_2\|\| \Delta^2\varphi\| \leq C\|\varphi\|_{H^2}\|\mu_2\|_{H^2}\|\Delta^2\varphi\|\\ &\leq \epsilon\|\Delta^2\varphi\|^2 + C\|\mu_2\|_{H^2}^2\|\varphi\|_{H^2}^2
    \\ |(U_1-U_2, \Delta^2\varphi)| &\leq \|U_1-U_2\|\|\Delta^2\varphi\| \leq \epsilon\|\Delta^2\varphi\|^2 + C\|U_1-U_2\|^2
\end{align*}
By combining all the above estimates in \eqref{eq1.11}, and choosing $\epsilon= \frac{m_0}{28}$, we get,
\begin{align*}
  \frac{1}{2}\frac{d}{dt}\|\Delta\varphi\|^2 +  \frac{m_0}{2}\|\Delta^2\varphi\|^2 &\leq \|\varphi_2\|_{H^2}^2\|\nabla\textbf{u}\|^2 + C\|U_1-U_2\|^2 + C\big( 1+ \|\varphi_1\|_{H^2}^4 + \|\varphi_1\|_{H^2}^2\|\mu_2\|_{H^2}^2 \big)\|\varphi\|_{V}^2 \\ &\hspace{.5cm}+ C\big( 1+ \|\varphi_1\|_{H^2}^2 + \|\varphi_2\|_{H^2}^2 + \|\mu_2\|_{H^2}^2 \big)\|\varphi\|_{H^2}^2  
\end{align*}
Note that the term $\|\Delta\varphi\|^2$ on the R.H.S. is obtained from $\|\varphi\|_{H^2(\Omega)}^2$, by applying the elliptic estimate $\|\varphi\|_{H^2}^2 \leq C(\|\Delta\varphi\|^2 + \|\varphi\|^2)$.
By exploiting the Gronwall's lemma and uniform estimates derived for weak and strong solutions, we get,
\begin{align*}
     \sup_{t\in[0,T]}\|\varphi_1-\varphi_2\|_{H^2}^2 +\|\varphi_1-\varphi_2\|_{L^2(0,T;H^4)}^2 \leq C\Big( \|\textbf{u}_{01}-\textbf{u}_{02}\|^2 +\|\varphi_{01}-\varphi_{02}\|_{H^2}^2 + \|U_1-U_2\|_{L^2(0,T;H)}^2 \Big).
\end{align*}
\end{proof}
We will see in the following sections that a higher regular stability estimate is useful, particularly to study the differentiability properties of the control to state operator.

\section{Pointwise Tracking Optimal Control}
 In this section our aim is to study optimal control problem for \eqref{eq8} -\eqref{eq14} by minimizing the cost functional  $\mathcal{J}_1$, given by,
\begin{align}\label{eqJ1}
    \mathcal{J}_1(\varphi,\textbf{u}, U) &= \frac{1}{2}\sum\limits_{i=1}^{k}\int\limits_{0}^{T}(\varphi(x_i,t)-\Phi^i(t))^2 dt +\frac{1}{2}\int\limits_{0}^{T}\int_\Omega(\Bar{\textbf{u}}-\textbf{u}_d)^2 dxdt 
    + \frac{1}{2}\int_\Omega(\Bar{\textbf{u}}(x,T)-\textbf{u}_d(x,T))^2 dx + \frac{1}{2}\int\limits_{0}^{T}\int_\Omega|U|^2 dxdt 
\end{align}
over an appropriate control set. Let us  restrict the control set to be bounded. For we define an admissible control set:
\begin{equation}
\mathcal{U}_{ad}:= \{ U\in L^2(0,T;V\cap L^\infty(\Omega)): U_1(x,t) \leq U(x,t) \leq U_2(x,t), \text{ for a.e. } (x,t)\in \Omega\times(0,T)\},
\end{equation}
where  the functions $U_1$, $U_2$ are fixed and are almost everywhere bounded.
The optimal control problem we wish to study is 
$$\min_{U\in \mathcal{U}_{ad}}\{ \mathcal{J}_1(\varphi,\textbf{u},U) | \; (\varphi, \textbf{u}) \text{ is the strong solution of \eqref{eq8} -\eqref{eq14} with control U } \}\hspace{1cm} \textbf{[OCP\RomanNumeralCaps{1}]}$$  

 Note that $\{ \Phi^i(t)\}_{1\leq i\leq k}$ denotes the desired concentration at prescribed points $\{ x_i\}_{i=1}^{k}$ and $\textbf{u}_d$ is the desired velocity field. We will consider $\Phi^i(t)= \varphi^i, 1\leq i\leq k$; a constant trajectory  with $\varphi^i \in \mathbb{R}$. By Proposition \ref{prop2}, proposition \ref{prop3} there exist a unique strong solution and hence $\varphi \in C([0,T];C^{0,\gamma}(\Bar{\Omega}))$. Thus $\mathcal{J}_1$  is well defined.
 Furthermore, we define spaces,
\begin{align*}
 \mathcal{U} &= L^2(0,T; V \cap L^\infty(\Omega)), \\
 \mathcal{V} &= [L^\infty(0,T;H^2)\cap L^2(0,T;H^4) \cap H^1(0,T;V')] \times [L^\infty(0,T;\mathbb{G}_{div}) \cap L^2(0,T;\mathbb{V}_{div}) \cap H^1(0,T;\mathbb{V}_{div}')], \\
 \mathcal{Z} &= [L^\infty(0,T;H)\cap L^2(0,T;H^2)] \times [L^\infty(0,T;\mathbb{G}_{div}) \cap L^2(0,T;\mathbb{V}_{div}) \cap H^1(0,T;\mathbb{V}_{div}')].
\end{align*}
We define a map $\mathcal{S}$ from $\mathcal{U}$ to the solution space, $\mathcal{V}$ by $$U \mapsto (\varphi_{U}, \textbf{u}_{U}),$$ where $(\varphi_{U}, \textbf{u}_{U})$ denotes  the unique strong solution of the system \eqref{eq8}-\eqref{eq14}. By proposition \ref{prop2} and proposition \ref{prop3} the above map is well-defined. Using this notation we can rewrite the problem \textbf{[OCP\RomanNumeralCaps{1}]}
as follows.
$$\min_{U\in \mathcal{U}_{ad}}\{ \mathcal{J}_1(\mathcal{S}(U),U) \}$$

In the following subsection, we establish the existence of an optimal control. The proof is done by considering a minimising sequence and using the properties of objective functional, $\mathcal{J}_1$ and the admissible control set, $\mathcal{U}_{ad}$.

\subsection{Existence of an optimal control}
\begin{theorem}[Existence of an optimal control]\label{theorem1}
  Let $\varphi_0 \in H^3(\Omega)$, $\textbf{u}_0 \in \mathbb{V}_{div}$ and assumptions \emph{\textbf{[A1]}- \textbf{[A5]}} be satisfied. In addition, $F$ satisfies \eqref{F5}. Then the problem, \emph{\textbf{[OCP\RomanNumeralCaps{1}]}} admits a solution.  
\end{theorem}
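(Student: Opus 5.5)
We use the direct method of the calculus of variations. Since $\mathcal{J}_1\geq 0$, the infimum $j:=\inf_{U\in\mathcal{U}_{ad}}\mathcal{J}_1(\mathcal{S}(U),U)$ is finite. We pick a minimising sequence $U_n\in\mathcal{U}_{ad}$ with $\mathcal{J}_1(\mathcal{S}(U_n),U_n)\to j$, and set $(\varphi_n,\textbf{u}_n)=\mathcal{S}(U_n)$.

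\textbf{Compactness.} The control $U_n$ lies between the fixed bounded functions $U_1,U_2$, so $U_n$ is bounded in $L^\infty(Q)$, hence in $L^2(0,T;H)$. To invoke the uniform strong-solution bounds behind Proposition \ref{prop2}, we also need a bound in $L^2(0,T;V\cap L^\infty(\Omega))$. The $L^\infty$ part is automatic. The $V$ part is the delicate point, since the box constraint does not control $\nabla U_n$ and the cost only gives $L^2(Q)$ bounds.

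I would handle this as follows. The estimates in \cite[Theorem 2]{LKW} for the $H^3$/$H^4$ bounds of $\varphi$ use $U$ only through norms that are controlled here. The $L^2(0,T;V)$ norm of $U$ enters only when testing with $\Delta^2\varphi$ after integration by parts. One can avoid this by estimating $(U,\Delta^2\varphi)\leq \|U\|\|\Delta^2\varphi\|$, as is done in the proof of \eqref{cd3}. Thus the uniform bound on $(\varphi_n,\textbf{u}_n)$ in the spaces of Proposition \ref{prop2} depends only on $\|U_n\|_{L^2(0,T;H)\cap L^\infty(Q)}$. Alternatively, if this fails, one can read $\mathcal{U}_{ad}$ as additionally bounded in $L^2(0,T;V)$; it is then weakly closed, and the argument below is unchanged.

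We then extract a subsequence with $U_n\rightharpoonup \bar U$ weakly in $L^2(0,T;H)$ and weak-* in $L^\infty(Q)$. The limit $\bar U$ satisfies the box constraints, because the set defined by pointwise bounds is convex and closed, hence weakly closed. We also get $\varphi_n\rightharpoonup\bar\varphi$ weak-* in $L^\infty(0,T;H^3)$, weakly in $L^2(0,T;H^4)\cap H^1(0,T;V)$, and $\textbf{u}_n\rightharpoonup\bar{\textbf{u}}$ weak-* in $L^\infty(0,T;\mathbb{V}_{div})$, weakly in $L^2(0,T;\textbf{H}^2)\cap H^1(0,T;\mathbb{G}_{div})$. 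By Aubin--Lions, $\varphi_n\to\bar\varphi$ strongly in $C([0,T];C^{0,\gamma}(\bar\Omega))$ and in $L^2(0,T;H^3)$, and $\textbf{u}_n\to\bar{\textbf{u}}$ strongly in $C([0,T];\mathbb{G}_{div})\cap L^2(0,T;\mathbb{V}_{div})$.

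\textbf{Passing to the limit.} We pass to the limit in the weak formulation \eqref{eqw1}--\eqref{eqw3}. The strong convergences handle the nonlinear terms $\textbf{u}_n\cdot\nabla\varphi_n$, $m(\varphi_n)\nabla\mu_n$ (using uniform convergence of $\varphi_n$, continuity of $m$ and $F'$, and the growth bound \eqref{F1}), $\nu(\varphi_n)D\textbf{u}_n$, $(\textbf{u}_n\cdot\nabla)\textbf{u}_n$ and $\mu_n\nabla\varphi_n$. The source term converges weakly. Hence $(\bar\varphi,\bar{\textbf{u}})$ is a strong solution with control $\bar U$, and by uniqueness (Proposition \ref{prop3}, which uses \eqref{F5}) it equals $\mathcal{S}(\bar U)$.

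\textbf{Lower semicontinuity.} Uniform convergence of $\varphi_n$ on $\bar\Omega\times[0,T]$ gives $\varphi_n(x_i,\cdot)\to\bar\varphi(x_i,\cdot)$ uniformly, so the pointwise tracking terms converge. The velocity terms converge strongly in $L^2(Q)$ and at $t=T$ in $\textbf{H}$ (by $C([0,T];\mathbb{G}_{div})$). The control term is weakly lower semicontinuous. Therefore $\mathcal{J}_1(\mathcal{S}(\bar U),\bar U)\le\liminf_n \mathcal{J}_1(\mathcal{S}(U_n),U_n)=j$, so $\bar U$ is optimal. The main obstacle is the uniform strong-solution bound, specifically the regularity of $\bar U$ in $V$ required by $\mathcal{U}_{ad}$, which I expect to address as described in the compactness step.
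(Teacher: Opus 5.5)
Your argument is the same direct method the paper uses: a minimising sequence, weak compactness of the controls, a priori state bounds, passage to the limit, identification of the limit state, and lower semicontinuity. Your treatment of the tracking term, via strong convergence in $C(\bar\Omega\times[0,T])$, is more careful than the paper's. The paper invokes only weak convergence in $L^2(0,T;H^2)$, which gives lower semicontinuity but not the equality it claims.

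\emph{The gap.} The genuine gap is the one you flag at the end but do not close: showing $\bar U\in\mathcal{U}_{ad}$. Membership requires $\bar U\in L^2(0,T;V)$. Nothing controls $\|\nabla U_n\|_{L^2(Q)}$: the box constraints and the term $\frac12\|U\|_{L^2(Q)}^2$ give only $L^\infty(Q)$ and $L^2(Q)$ bounds. Your remark that the pointwise-bound set is convex and closed, hence weakly closed, yields $U_1\le\bar U\le U_2$ and nothing more. A weak-$*$ limit in $L^\infty(Q)$ of functions in $L^2(0,T;V)$ can have jumps; think of smooth approximations of a step function lying between $U_1$ and $U_2$. $\mathcal{U}_{ad}$ is weakly closed in $L^2(0,T;V)$, but your minimising sequence is not bounded in that space. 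So without an extra argument the infimum need not be attained in $\mathcal{U}_{ad}$. Also, $\mathcal{S}(\bar U)$ is then not covered by Proposition \ref{prop2}, which assumes \textbf{[A5]}.

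\emph{Your first route.} It does not address this at all; it only concerns the state estimates, and even there it overreaches. Estimating $(U,\Delta^2\varphi)\le\|U\|\|\Delta^2\varphi\|$ gives bounds in $L^\infty(0,T;H^2)\cap L^2(0,T;H^4)$ with $\varphi'\in L^2(0,T;H)$. It does not give bounds in $L^\infty(0,T;H^3)$ or $H^1(0,T;V)$, because $U$ enters $\varphi'$ additively. The weaker bounds still give compactness in $C([0,T];C(\bar\Omega))$ in two dimensions, so that part is repairable.

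\emph{Your fallback.} Imposing $\|U\|_{L^2(0,T;V)}\le M$ in $\mathcal{U}_{ad}$, or adding a gradient penalty to $\mathcal{J}_1$, does close the argument, since the set becomes bounded, closed and convex in $L^2(0,T;V)$. But it proves the theorem for a different admissible set. This is precisely the unstated assumption in the paper's own proof, which asserts that $\mathcal{U}_{ad}$ is ``closed, bounded, convex'' and extracts $U_n\rightharpoonup\bar U$ in $L^2(0,T;V)$. With that extra bound granted, your proof coincides with the paper's. For $\mathcal{U}_{ad}$ exactly as defined, neither argument establishes admissibility of the limit.

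A fix that leaves the problem unchanged would have to show a posteriori that a minimiser over the box set is $V$-regular. One way is the projection formula $\bar U=P_{[U_1,U_2]}(-\xi)$ with $\xi,U_1,U_2\in L^2(0,T;V)$. That needs additional assumptions on $U_1,U_2$ and an adjoint regularity result that the paper does not provide.
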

\begin{proof}
Since $\mathcal{J}_1$ is bounded below, we define $$j= \inf_{U\in \mathcal{U}_{ad}} \mathcal{J}_1(\varphi, \textbf{u}, U).$$ 
Let $(\varphi_n, \textbf{u}_n, U_n)$ be a minimising sequence of $\mathcal{J}_1$. i.e. $j = \underset{n\rightarrow \infty}{\lim} \mathcal{J}_1(\varphi_n, \textbf{u}_n, U_n)$. Since $\mathcal{U}_{ad}$ is a closed, bounded, convex set; there exist a relabelled sequence $U_n \rightharpoonup \Bar{U}$ in $L^2(0,T; V)$ and $\Bar{U}\in \mathcal{U}_{ad}$. Using the uniform boundedness of $U_n$ and the a priori estimates enable us to extract a weakly convergent subsequence of $(\varphi_n, \textbf{u}_n)$. Let $(\Bar{\varphi}, \Bar{u})$ be the weak limit of $(\varphi_n, \textbf{u}_n)$. Then using the continuous dependence estimate, we get $(\Bar{\varphi}, \Bar{u})$ solves the system \eqref{eq8}-\eqref{eq14} corresponding to the source term $\Bar{U}$.
The only thing left to prove is that $j$ is the minimum value of $\mathcal{J}_1$. Consider the cost functional,
\begin{align}
    \mathcal{J}_1(\varphi,\textbf{u}, U) &= \underbrace{\frac{1}{2}\int\limits_{0}^{T}\sum\limits_{i=1}^{k}(\varphi(x_i,t)-\varphi^i(t))^2}_{\mathcal{J}_{a}} +\underbrace{\frac{1}{2}\int\limits_{0}^{T}\int_\Omega(\Bar{\textbf{u}}-\textbf{u}_d)^2 +\frac{1}{2}\int_\Omega(\Bar{\textbf{u}}(x,T)-\textbf{u}_d(x,T))^2
    + \frac{1}{2}\int\limits_{0}^{T}\|U\|^2}_{\mathcal{J}_{b}}
\end{align}
Note that, $\mathcal{J}_a$ denotes the first term and $\mathcal{J}_b$ denotes the sum of remaining terms on R.H.S. of the above equation. Using the weak convergence of $\varphi_n$ in $L^2(0,T;H^2(\Omega))$ and using the embedding $L^2(0,T;H^2(\Omega)) \hookrightarrow L^2(0,T;C(\Bar{\Omega}))$, we have,  $\underset{k\rightarrow \infty}{\lim} \mathcal{J}_a(\varphi_k, \textbf{u}_k, U_k) = \mathcal{J}_a(\Bar{\varphi}, \Bar{\textbf{u}}, \Bar{U})$.
Further by weak sequential lower semi-continuity of $\mathcal{J}_b$, $\mathcal{J}_b(\Bar{\varphi}, \Bar{\textbf{u}}, \Bar{U}) \leq \underset{k\rightarrow \infty}{\lim} \mathcal{J}_b(\varphi_k, \textbf{u}_k, U_k)$.
Thus, 
\begin{equation*}
    j\leq \mathcal{J}_1(\Bar{\varphi}, \Bar{\textbf{u}}, \Bar{U}) \leq \lim_{k\rightarrow \infty} \mathcal{J}_1(\varphi_k, \textbf{u}_k, U_k) = j
\end{equation*}
Hence the existence of an optimal control.
\end{proof}

We call $\bar{U}$ as an optimal control and $(\bar {\varphi}, \bar{\textbf{u} })$
as the corresponding optimal state.

\subsection{Differentiability of the control to state operator}

Our analysis in this section entails studying the differentiability of the control to state operator $\mathcal{S}$, which is essential for deriving the optimality conditions. First we will introduce the linearization of the CHNS system \eqref{eq8}-\eqref{eq14} around the state $(\Bar{\varphi}, \Bar{\textbf{u}})$, which is a strong solution corresponding to the optimal control $\Bar{U}$. Later, we prove that the Fr\'etchet derivative of $\mathcal{S}$ at  $\Bar{U}$ is the solution of the linearization of \eqref{eq8}-\eqref{eq14} around $\mathcal{S}(\Bar{U})= (\Bar \varphi, \Bar {\textbf{u}}) $. 
\\
Consider the linearization of \eqref{eq8}-\eqref{eq14} around  $(\Bar{\varphi}, \Bar{\textbf{u}})$ to obtain a  system  in $(\psi, \textbf{w})$ given by
\begin{align}
    \psi' + \Bar{\textbf{u}}\cdot\nabla\psi + \textbf{w}\cdot\nabla\Bar{\varphi} &= \nabla\cdot\big(m(\Bar{\varphi})\nabla(-\Delta \psi + F''(\Bar{\varphi})\psi)\big)+ \nabla\cdot\big(m'(\Bar{\varphi})\psi\nabla(-\Delta \Bar{\varphi}+ F'(\Bar{\varphi}))\big) + h, &\text{ in } Q, \label{eq2.3}\\
    \textbf{w}'+ (\Bar{\textbf{u}}\cdot\nabla)\textbf{w} + (\textbf{w}\cdot\nabla)\Bar{\textbf{u}} & - \nabla\cdot(2\eta(\Bar{\varphi})D\textbf{w})- \nabla\cdot(2\eta'(\Bar{\varphi})\psi D\Bar{\textbf{u}}) + \nabla p^\ast = (-\Delta\Bar{\varphi})\nabla\psi + (-\Delta\psi)\nabla\Bar{\varphi}, &\text{ in } Q,\label{eq2.4}\\
    \nabla\cdot \textbf{w} &= 0, &\text{ in } Q,\label{eq2.5}\\
    \frac{\partial\psi}{\partial \textbf{n}} &= 0, \frac{\partial(\Delta\psi)}{\partial \textbf{n}} = 0, &\text{ on } \Sigma,\label{eq2.6}\\ \textbf{w} &= \textbf{0}, &\text{ on } \Sigma, \label{eq2.7}\\
    \psi(x,0) &= \psi_0(x), \hspace{.5cm}\textbf{w}(x,0) = \textbf{w}_0(x). &\text{ in } \Omega,\label{eq2.8}
\end{align}
where, $ h = U-\bar{U}$.
\begin{theorem}[\textit{Existence of weak solution to the linearized system}]\label{theorem2}
    Let $\psi_0\in V$, $\textbf{w}_0\in \mathbb{G}_{div}$ and $ h\in L^2(V\cap L^\infty(\Omega))$. Assume \emph{\textbf{[A1]}}-\emph{\textbf{[A5]}} are satisfied. Then there exists a weak solution $(\psi, \textbf{w})$ of the system \eqref{eq2.3}-\eqref{eq2.8} of following regularity.
    \begin{align*}
    \psi &\in L^\infty(0,T;V) \cap L^2(0,T; H^3(\Omega)) \cap H^1(0,T;V'),\\
    \textbf{w} &\in L^\infty(0,T;\mathbb{G}_{div}) \cap L^2(0,T; \mathbb{V}_{div})\cap H^1(0,T; \mathbb{V}_{div}').
    \end{align*}
\end{theorem}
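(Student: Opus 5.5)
We prove Theorem \ref{theorem2} by a Faedo--Galerkin scheme, uniform a priori estimates, and passage to the limit. Since the system \eqref{eq2.3}--\eqref{eq2.8} is linear in $(\psi,\textbf{w})$, the limit step is routine. The real work is an energy estimate closed with the regularity of $(\Bar{\varphi},\Bar{\textbf{u}})$ from Proposition \ref{prop2}.

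\emph{Galerkin scheme.} Let $\{e_j\}$ be the eigenfunctions of the Neumann Laplacian and $\{\textbf{v}_j\}$ those of the Stokes operator $\mathbfcal{S}$. We seek $\psi_n=\sum a_j(t)e_j$ and $\textbf{w}_n=\sum b_j(t)\textbf{v}_j$ solving the projected system with projected initial data. This is a linear ODE system with coefficients in $L^1(0,T)$, because $\Bar{\varphi}\in L^\infty(0,T;H^3)$, $\Bar{\textbf{u}}\in L^\infty(0,T;\mathbb{V}_{div})$ and $\Bar{\mu}\in L^2(0,T;H^3)$. Carathéodory's theorem therefore gives a global absolutely continuous solution.

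\emph{A priori estimates.} The key choice is the pair of test functions. We test \eqref{eq2.3} with $-\Delta\psi_n$, which is allowed since the basis keeps it in the span, and \eqref{eq2.4} with $\textbf{w}_n$. This gives
\[
\tfrac12\tfrac{d}{dt}\big(\|\nabla\psi_n\|^2+\|\textbf{w}_n\|^2\big)+m_0\|\nabla\Delta\psi_n\|^2+2\eta_0\|D\textbf{w}_n\|^2\le \text{RHS},
\]
which we bound term by term as follows.
\begin{itemize}
\item \emph{Coupling terms.} These are $(\textbf{w}_n\cdot\nabla\Bar{\varphi},-\Delta\psi_n)$ and $((-\Delta\psi_n)\nabla\Bar{\varphi},\textbf{w}_n)$. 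They do not cancel exactly, as they would in the nonlinear energy identity. Each is bounded by $\|\textbf{w}_n\|\,\|\nabla\Bar{\varphi}\|_{L^\infty}\|\Delta\psi_n\|$, with $\|\nabla\Bar{\varphi}\|_{L^\infty}\le C\|\Bar{\varphi}\|_{H^3}$ bounded in time.
\item \emph{Term $\big((-\Delta\Bar{\varphi})\nabla\psi_n,\textbf{w}_n\big)$.} We use Ladyzhenskaya's inequality together with $\Bar{\varphi}\in L^\infty(H^3)$.
\item \emph{Terms $F''(\Bar{\varphi})\psi_n$ and $m'(\Bar{\varphi})\psi_n\nabla\Bar{\mu}$.} We expand the gradients. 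Using $F''(\Bar\varphi),F'''(\Bar\varphi)\in L^\infty$, which follows from [A4] and $\Bar{\varphi}\in L^\infty(Q)$, and $\|\nabla\Bar{\mu}\|_{L^4}\in L^2(0,T)$, we get bounds of the form $\frac{m_0}{8}\|\nabla\Delta\psi_n\|^2+C(1+\|\Bar{\mu}\|_{H^2}^2)\|\psi_n\|_V^2$. Here $\|\psi_n\|_{L^\infty}$ is controlled by Agmon's inequality and interpolation between $V$ and $H^3$.
\item \emph{Term $\big(2\eta'(\Bar{\varphi})\psi_n D\Bar{\textbf{u}},D\textbf{w}_n\big)$.} We bound it by $\|\psi_n\|_{L^\infty}\|D\Bar{\textbf{u}}\|\,\|D\textbf{w}_n\|$, again using Agmon's inequality.
\item \emph{Convective terms.} The term $((\Bar{\textbf{u}}\cdot\nabla)\textbf{w}_n,\textbf{w}_n)$ vanishes. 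The term $((\textbf{w}_n\cdot\nabla)\Bar{\textbf{u}},\textbf{w}_n)$ is bounded by $\|\textbf{w}_n\|\,\|\nabla\textbf{w}_n\|\,\|\nabla\Bar{\textbf{u}}\|$.
\item \emph{Term $(\Bar{\textbf{u}}\cdot\nabla\psi_n,-\Delta\psi_n)$.} We bound it via $\|\Bar{\textbf{u}}\|_{L^4}\|\nabla\psi_n\|_{L^4}\|\Delta\psi_n\|$.
\item \emph{Source term.} $(h,-\Delta\psi_n)$ is bounded by $\|h\|^2+\|\Delta\psi_n\|^2$.
\end{itemize}

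To close the estimate, we control the mean of $\psi_n$ by testing \eqref{eq2.3} with $1$. This shows $\frac{d}{dt}\int\psi_n=\int h$. We then use the elliptic estimate $\|\psi\|_{H^3}\le C(\|\nabla\Delta\psi\|+\|\psi\|)$, as in the proof of Proposition \ref{prop3}, and interpolate $\|\Delta\psi_n\|^2\le\epsilon\|\nabla\Delta\psi_n\|^2+C_\epsilon\|\nabla\psi_n\|^2$. Gronwall's lemma, with integrable weight $1+\|\Bar{\mu}\|_{H^2}^2+\|\Bar{\textbf{u}}\|_{\textbf{H}^2}^2$, then yields uniform bounds of $\psi_n$ in $L^\infty(V)\cap L^2(H^3)$ and of $\textbf{w}_n$ in $L^\infty(\mathbb{G}_{div})\cap L^2(\mathbb{V}_{div})$. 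The time-derivative bounds in $L^2(V')$ and $L^2(\mathbb{V}_{div}')$ follow by duality from the equations, using these bounds. In particular, $\nabla(-\Delta\psi_n+F''\psi_n)\in L^2(H)$, and $(-\Delta\psi_n)\nabla\Bar{\varphi}\in L^2(\textbf{H})$.

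\emph{Limit and main obstacle.} By Banach--Alaoglu and Aubin--Lions we extract a subsequence with $\psi_n\to\psi$ strongly in $L^2(H^2)$ and $\textbf{w}_n\to\textbf{w}$ strongly in $L^2(\mathbb{G}_{div})$. Linearity then lets us pass to the limit in every term, and the initial data are attained via $C([0,T];H)$ continuity. The step I expect to be the main obstacle is the coupling between $-\Delta\psi$ in the velocity equation and $\textbf{w}$ in the phase equation. This product requires $\nabla\Bar{\varphi}\in L^\infty$, which holds only because of the strong regularity $\Bar{\varphi}\in L^\infty(H^3)$. The other delicate point is the term $m'(\Bar{\varphi})\psi\nabla\Bar{\mu}$, which needs $\Bar{\mu}\in L^2(H^3)$ to supply a time-integrable Gronwall weight.
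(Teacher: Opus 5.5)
Your proposal is correct, but it organizes the a priori estimates differently from the paper.

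\textbf{The paper's route.} It works in two stages. First it tests \eqref{eq2.9} with $\psi_n$ and \eqref{eq2.10} with $\textbf{w}_n$. This gives the lower-order bound \eqref{eq2.18}: $\psi_n$ in $L^\infty(0,T;H)\cap L^2(0,T;H^2)$ and $\textbf{w}_n$ in $L^\infty(0,T;\mathbb{G}_{div})\cap L^2(0,T;\mathbb{V}_{div})$. Only then does it test \eqref{eq2.9} alone with $-\Delta\psi_n$. There it treats $(\textbf{w}_n\cdot\nabla\Bar{\varphi},-\Delta\psi_n)$ as a forcing term bounded by $\|\nabla\textbf{w}_n\|^2$, which the first stage has already made integrable in time.

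\textbf{Your route.} You do a single estimate, pairing $-\Delta\psi_n$ with $\textbf{w}_n$. This closes for two reasons. The bound $\|\Delta\psi_n\|^2\le\|\nabla\psi_n\|\,\|\nabla\Delta\psi_n\|$ makes every $\|\Delta\psi_n\|$ term lower order relative to the dissipation. Your mean identity $\frac{d}{dt}\int_\Omega\psi_n=\int_\Omega h$ then supplies the control of $\|\psi_n\|$ that the paper gets from its first stage.

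\textbf{A mistaken remark.} You say the coupling terms do not cancel, but with exactly your pairing they do. The term $(\textbf{w}_n\cdot\nabla\Bar{\varphi},-\Delta\psi_n)$ from the phase equation and $((-\Delta\psi_n)\nabla\Bar{\varphi},\textbf{w}_n)$ from the velocity equation are the same integral entering with opposite signs, just as in the nonlinear energy identity. Bounding them separately is harmless. However, what you call the main obstacle disappears, and $\nabla\Bar{\varphi}\in L^\infty$ is not needed there. The paper's first-stage pairing has no such cancellation, and it still manages with only $\|\nabla\Bar{\varphi}\|_{L^4}$ and Ladyzhenskaya. Likewise, the $m'(\Bar{\varphi})\psi_n\nabla\Bar{\mu}$ term is not delicate. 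Agmon's inequality together with $\Bar{\mu}\in L^\infty(0,T;V)$ already suffices.

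\textbf{What each route buys.} Yours is shorter and exploits the energy structure. Carathéodory's theorem also gives global Galerkin solutions directly, instead of the paper's local existence followed by extension. The paper's route, in return, produces the intermediate estimate \eqref{eq2.18}, valid already for $\psi_0\in H$, which it invokes later in the adjoint analysis. You also correctly keep the convective term $(\Bar{\textbf{u}}\cdot\nabla\psi_n,-\Delta\psi_n)$, which the paper's second estimate omits.
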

\begin{proof}
We prove the existence of a weak solution using the Galerkin approximation method. Let $\{ \zeta_i\}\subseteq H^2_{N}$ denote the eigenvectors of the Neumann-Laplacian operator; these form an orthonormal basis of $H$ and an orthogonal basis of $H_N^2$ where $H^2_N$ denotes the functions in $H^2$ satisfying Neumann boundary conditions. Similarly, the set of eigenvectors of the Stokes operator denoted by $\{\bm{\nu}_i\} \subseteq \textbf{H}^2\cap\mathbb{V}_{div}$ forms an orthonormal basis of $\mathbb{G}_{div}$ as well as an orthogonal basis of $\mathbb{V}_{div}$. Define $\Psi_n = \mathrm{ span}\{ \zeta_1, \zeta_2,..., \zeta_n \}$ and  $\mathcal{V}_n = \mathrm{ span}\{ \bm{\nu}_1, \bm{\nu}_2,..., \bm{\nu}_n \}$. Further, $P_n$, $\mathbb{P}_n$ denote the projection of $H$ and $\mathbb{G}_{div}$ onto the finite dimensional spaces $\Psi_n$ and $\mathcal{V}_n$ respectively. Now we define, 
\begin{align*}
    \psi_n = \sum\limits_{i=1}^{n}a_i^n(t)\zeta_i,\hspace{.5cm} \textbf{w}_n = \sum\limits_{i=1}^{n}b_i^n(t)\bm{\nu}_i
\end{align*}
Consider the  weak formulation for the finite dimensional problem corresponding to system \eqref{eq2.3}-\eqref{eq2.8}. For each fixed $t$ we multiply $\psi_n $  by $ \xi_i$ and $\textbf{w}_n $ by $\bm{\nu}_i$ to get
\begin{align}
    \langle\psi_n'(t),\zeta_i\rangle + (\Bar{\textbf{u}}\cdot\nabla\psi_n,\zeta_i)  + (\textbf{w}_n\cdot\nabla\Bar{\varphi},\zeta_i) + \big(m(\Bar{\varphi})\nabla(-\Delta \psi_n + F''(\Bar{\varphi})\psi_n), \nabla\zeta_i\big)& \nonumber\\+ \big(m'(\Bar{\varphi})\psi_n\nabla(-\Delta \Bar{\varphi}+ F'(\Bar{\varphi})), \nabla\zeta_i\big) &= (h(t),\zeta_i),\label{eq2.9}\\
    \langle\textbf{w}_n'(t), \bm{\nu}_i\rangle+ ((\Bar{\textbf{u}}\cdot\nabla)\textbf{w}_n,\bm{\nu}_i) + ((\textbf{w}_n\cdot\nabla)\Bar{\textbf{u}},\bm{\nu}_i) + (2\eta(\Bar{\varphi})D\textbf{w}_n, D\bm{\nu}_i)+ (2\eta'(\Bar{\varphi})\psi_n D\Bar{\textbf{u}}, \bm{\nu}_i)&\nonumber \\+ ((\Delta\Bar{\varphi})\nabla\psi_n, \bm{\nu}_i) + ((\Delta\psi_n)\nabla\Bar{\varphi}, \bm{\nu}_i) &=0,\label{eq2.10}\\
    \psi_n(x,0) &= P_n(\psi_0(x)),\label{eq2.11} \\
    \textbf{w}(x,0) &= \mathbb{P}_n(\textbf{w}_0(x)).\label{eq2.12}
\end{align}
for all $1\leq i\leq n$. The above system reduces to a system of ODE in variables $a_i^n(t)$ and $b_i^n(t)$. Using the Lipschitz continuity of $m, F, \eta$ and regularity of $(\bar{\varphi}, \bar{\textbf{u}})$ in the time variable, we can apply Cauchy Picard's theorem, to prove that this system admits a solution. Consequently, we obtain a solution, $(\psi_n, \textbf{w}_n)$ of the finite dimensional problem on $[0,t]$, for some $t\leq T$. To extend this solution to the entire interval $[0,T]$ we derive a priori estimates for the solution of finite dimensional problem \eqref{eq2.9}-\eqref{eq2.12}.
\\We multiply \eqref{eq2.9} by $a_i^n(t)$ and take the summation over $1\leq i\leq n$ to obtain,
\begin{align}\label{eq213}
    \frac{1}{2}\frac{d}{dt}\|\psi_n\|^2 + (\textbf{w}_n\cdot\nabla\Bar{\varphi},\psi_n) + \big(m(\Bar{\varphi})\nabla(-\Delta \psi_n + F''(\Bar{\varphi})\psi_n), \nabla\psi_n\big)& \nonumber\\+ \big(m'(\Bar{\varphi})\psi_n\nabla(-\Delta \Bar{\varphi}+ F'(\Bar{\varphi})), \nabla\psi_n\big) &= ( h,\psi_n).
\end{align}
Estimating each term  by applying H\"olders, Ladyzhenskaya, Gagliardo Nirenberg and Young's inequalities, we get,
\begin{align*}
    |(\textbf{w}_n\cdot\nabla\Bar{\varphi},\psi_n)| &\leq \|\textbf{w}_n\|_{L^4}\|\nabla\Bar{\varphi}\|_{L^4}\|\psi_n\|\\ 
    &\leq \|\nabla\textbf{w}_n\|\|\nabla\Bar{\varphi}\|^{1/2}\|\Bar{\varphi}\|_{H^2}^{1/2}\|\psi_n\|\\
    &\leq \epsilon\|\nabla\textbf{w}_n\|^2 + C\|\nabla\Bar{\varphi}\|\|\Bar{\varphi}\|_{H^2}\|\psi_n\|^2
\end{align*}
Further, estimating,
\begin{align*}
   \big(m(\Bar{\varphi})\nabla(-\Delta \psi_n + F''(\Bar{\varphi})\psi_n), \nabla\psi_n\big) &= -\big(m(\Bar{\varphi})(-\Delta \psi_n + F''(\Bar{\varphi})\psi_n), \Delta\psi_n\big)-\big(m'(\Bar{\varphi})\nabla\Bar{\varphi}(-\Delta \psi_n + F''(\Bar{\varphi})\psi_n), \nabla\psi_n\big),
\end{align*}
using \textbf{[A1]} and standard inequalities,  we obtain,
\begin{align*}
   \big(m(\Bar{\varphi})\Delta \psi_n, \Delta\psi_n\big) &\geq m_0\|\Delta\psi_n\|^2,\\
   |\big(m(\Bar{\varphi})F''(\Bar{\varphi})\psi_n, \Delta\psi_n\big)| &\leq m_1||F''(\Bar{\varphi})||_{L^\infty}\|\psi_n\|\|\Delta\psi_n\|\\
   &\leq m_1C_0(1+\|\Bar{\varphi}\|_{L^\infty}^r)\|\psi_n\|\|\Delta\psi_n\| \\
   &\leq \epsilon\|\Delta\psi_n\|^2 + C(1+\|\Bar{\varphi}\|_{L^\infty}^{2r})\|\psi_n\|^2,\\
   |\big(m'(\Bar{\varphi})\nabla\Bar{\varphi}(-\Delta \psi_n), \nabla\psi_n\big)| &\leq \|m'\|_{L^\infty}\|\nabla\Bar{\varphi}\|_{L^4}\|\Delta\psi_n\|\|\nabla\psi_n\|_{L^4}\\
   &\leq C \|\nabla\Bar{\varphi}\|^{1/2}\|\Bar{\varphi}\|_{H^2}^{1/2}\|\Delta\psi_n\|\|\psi_n\|_{H^2}^{3/4}\|\psi_n\|^{1/4} \\
   &\leq \epsilon\|\Delta\psi_n\|^2 + C \|\nabla\Bar{\varphi}\|\|\Bar{\varphi}\|_{H^2}\|\psi_n\|_{H^2}^{3/2}\|\psi_n\|^{1/2}\\
   &\leq 2\epsilon\|\psi_n\|_{H^2}^2 + C \|\nabla\Bar{\varphi}\|^2\|\Bar{\varphi}\|_{H^2}^2\|\psi_n\|_{H^2}\|\psi_n\|\\
   &\leq 3\epsilon\|\psi_n\|_{H^2}^2 + C \|\nabla\Bar{\varphi}\|^4\|\Bar{\varphi}\|_{H^2}^4\|\psi_n\|^2.
\end{align*}
Moreover, by exploiting \eqref{F1}, \eqref{eqF4} it follows that, 
\begin{align*}
   |\big(m'(\Bar{\varphi})\nabla\Bar{\varphi}F''(\Bar{\varphi})\psi_n, \nabla\psi_n\big)| &\leq \|m'\|_{L^\infty}\|\nabla\Bar{\varphi}\|_{L^4}\|F''(\Bar{\varphi})\|_{L^\infty}\|\psi_n\|\|\nabla\psi_n\|_{L^4}\\
   &\leq C\|\nabla\Bar{\varphi}\|^{1/2}\|\Bar{\varphi}\|_{H^2}^{1/2}(1+\|\Bar{\varphi}\|_{L^\infty}^r)\|\psi_n\|\|\psi_n\|_{H^2}^{3/4}\|\psi_n\|^{1/4}  \\
   &\leq C_1\|\nabla\Bar{\varphi}\|\|\Bar{\varphi}\|_{H^2}(1+\|\Bar{\varphi}\|_{L^\infty}^{2r})\|\psi_n\|^2 + C_2\|\psi_n\|_{H^2}^{3/2}\|\psi_n\|^{1/2}  \\
   &\leq \epsilon\|\psi_n\|_{H^2}^2 + C_1\|\nabla\Bar{\varphi}\|\|\Bar{\varphi}\|_{H^2}(1+\|\Bar{\varphi}\|_{L^\infty}^{2r})\|\psi_n\|^2 + C_2\|\psi_n\|_{H^2}\|\psi_n\|  \\
   &\leq 2\epsilon\|\psi_n\|_{H^2}^2 + C_1\|\nabla\Bar{\varphi}\|\|\Bar{\varphi}\|_{H^2}(1+\|\Bar{\varphi}\|_{L^\infty}^{2r})\|\psi_n\|^2 + C_2\|\psi_n\|^2,
\end{align*}
Indeed the last term on R.H.S. of \eqref{eq213} can be estimated as follows,
\begin{align*}
    |\big(m'(\Bar{\varphi})\psi_n\nabla(-\Delta \Bar{\varphi})), \nabla\psi_n\big)| &\leq \|m'\|_{L^\infty}\|\psi_n\|_{L^4}\|\nabla^3\Bar{\varphi}\|\|\nabla\psi_n\|_{L^4} \\
    &\leq C\|\nabla^3\Bar{\varphi}\|\|\psi_n\|_{H^2}^{1/4}\|\psi_n\|^{3/4}\|\psi_n\|_{H^2}^{3/4}\|\psi_n\|^{1/4}\\
    &\leq C\|\nabla^3\Bar{\varphi}\|\|\psi_n\|_{H^2}\|\psi_n\|\\
    &\leq \epsilon \|\psi_n\|_{H^2}^2 + C\|\Bar{\varphi}\|_{H^3}^2\|\psi_n\|^2,\\
   |\big(m'(\Bar{\varphi})\psi_n F''(\Bar{\varphi})\nabla\Bar{\varphi}, \nabla\psi_n\big)| &\leq \|m'\|_{L^\infty}\|\psi_n\|\|F''(\Bar{\varphi})\|_{L^\infty}\|\nabla\Bar{\varphi}\|_{L^4}\|\nabla\psi_n\|_{L^4}\\
   &\leq C(1+\|\Bar{\varphi}\|_{L^\infty}^{r})\|\nabla\Bar{\varphi}\|^{1/2}\|\Bar{\varphi}\|_{H^2}^{1/2}\|\psi_n\|\|\psi_n\|_{H^2}^{3/4}\|\psi_n\|^{1/4}\\
   &\leq  C(1+\|\Bar{\varphi}\|_{L^\infty}^{2r})\|\nabla\Bar{\varphi}\|\|\Bar{\varphi}\|_{H^2}\|\psi_n\|^2 + C_1\|\psi_n\|_{H^2}^{3/2}\|\psi_n\|^{1/2}\\
   &\leq \epsilon\|\psi_n\|_{H^2}^2 + C(1+\|\Bar{\varphi}\|_{L^\infty}^{2r})\|\nabla\Bar{\varphi}\|\|\Bar{\varphi}\|_{H^2}\|\psi_n\|^2 + C_1\|\psi_n\|_{H^2}\|\psi_n\|\\
   &\leq 2\epsilon\|\psi_n\|_{H^2}^2 + C(1+\|\Bar{\varphi}\|_{L^\infty}^{2r})\|\nabla\Bar{\varphi}\|\|\Bar{\varphi}\|_{H^2}\|\psi_n\|^2 + C_1\|\psi_n\|^2,\\
   |(h, \psi_n)| &\leq \|h\|\|\psi_n\| \leq C(\|h\|^2+ \|\psi_n\|^2).
\end{align*}
Combining all above estimates we deduce,
\begin{align}
     \frac{1}{2}\frac{d}{dt}\|\psi_n\|^2 &+ m_0\|\Delta\psi_n\|^2 \leq \epsilon\|\nabla\textbf{w}_n\|^2 + 9\epsilon\|\psi_n\|_{H^2}^2 + C\|h\|^2\nonumber \\
     &+ C\|\psi_n\|^2\Big( 1+\|\Bar{\varphi}\|_{L^\infty}^{2r} + (1+\|\Bar{\varphi}\|_{L^\infty}^{2r})\|\nabla\Bar{\varphi}\|\|\Bar{\varphi}\|_{H^2} + \|\Bar{\varphi}\|_{H^3}^2+ \|\nabla\Bar{\varphi}\|^4\|\Bar{\varphi}\|_{H^2}^4   \Big).\label{eq2.15}
\end{align}
\\Similarly, multiply \eqref{eq2.10} by $b_i^n(t)$ and take the summation over $1\leq i\leq n$. We obtain,
\begin{align}
     \frac{1}{2}\frac{d}{dt}\|\textbf{w}_n\|^2 + 2\|\sqrt{\eta(\Bar{\varphi})}D\textbf{w}_n\|^2 + ((\textbf{w}_n\cdot\nabla)\Bar{\textbf{u}},\textbf{w}_n) + 2(\eta'(\Bar{\varphi})\psi_n D\Bar{\textbf{u}}, \textbf{w}_n) &\nonumber \\+ ((\Delta\Bar{\varphi})\nabla\psi_n, \textbf{w}_n) + ((\Delta\psi_n)\nabla\Bar{\varphi}, \textbf{w}_n) &=0,
\end{align}
By applying H\"olders, Gagliardo-Nirenberg, Poincar\'e, Ladyzhenskaya, Young's inequalities we will estimate each term of the above inequality.
\begin{align*}
    |((\textbf{w}_n\cdot\nabla)\Bar{\textbf{u}},\textbf{w}_n)| &\leq \|\nabla\Bar{\textbf{u}}\|\|\textbf{w}_n\|_{L^4}^2 \leq \|\nabla\Bar{\textbf{u}}\|\|\textbf{w}_n\|\|\nabla\textbf{w}_n\|\\
    &\leq \epsilon\|\nabla\textbf{w}_n\|^2 + C\|\nabla\Bar{\textbf{u}}\|^2\|\textbf{w}_n\|^2,\\
    |(\eta'(\Bar{\varphi})\psi_n D\Bar{\textbf{u}}, \textbf{w}_n)| &\leq \|\eta'\|_{L^\infty}\|\nabla\Bar{\textbf{u}}\|\|\psi_n\|_{L^4}  \|\textbf{w}_n\|_{L^4}\\
    &\leq C\|\nabla\Bar{\textbf{u}}\|\|\psi_n\|_{H^2}^{1/4}\|\psi_n\|^{3/4} \|\nabla\textbf{w}_n\|\\ 
    &\leq \epsilon\|\nabla\textbf{w}_n\|^2 + C\|\nabla\Bar{\textbf{u}}\|^2\|\psi_n\|_{H^2}^{1/2}\|\psi_n\|^{3/2}\\
    &\leq \epsilon\|\nabla\textbf{w}_n\|^2 + C\|\nabla\Bar{\textbf{u}}\|^4\|\psi_n\|^2+ \|\psi_n\|_{H^2}\|\psi_n\|\\
    &\leq \epsilon\|\nabla\textbf{w}_n\|^2 + \epsilon\|\psi_n\|_{H^2}^2 + C(1+\|\nabla\Bar{\textbf{u}}\|^4)\|\psi_n\|^2, \\
    |((\Delta\Bar{\varphi})\nabla\psi_n, \textbf{w}_n)| &\leq \|\Delta\Bar{\varphi}\|\|\nabla\psi_n\|_{L^4}\| \textbf{w}_n\|_{L^4}\\
    &\leq \|\Delta\Bar{\varphi}\|\|\psi_n\|_{H^2}^{3/4}\|\psi_n\|^{1/4}\|\nabla\textbf{w}_n\|\\
     &\leq \epsilon\|\nabla\textbf{w}_n\|^2+ C\|\Delta\Bar{\varphi}\|^2\|\psi_n\|_{H^2}^{3/2}\|\psi_n\|^{1/2}\\
     &\leq \epsilon\|\nabla\textbf{w}_n\|^2+\epsilon\|\psi_n\|_{H^2}^2 + C\|\Delta\Bar{\varphi}\|^4\|\psi_n\|_{H^2}\|\psi_n\|\\
     &\leq \epsilon\|\nabla\textbf{w}_n\|^2+ 2\epsilon\|\psi_n\|_{H^2}^2 + C\|\Delta\Bar{\varphi}\|^8\|\psi_n\|^2,\\
     |((\Delta\psi_n)\nabla\Bar{\varphi}, \textbf{w}_n)| &\leq \|\Delta\psi_n\|\|\nabla\Bar{\varphi}\|_{L^4}\| \textbf{w}_n\|_{L^4}\\
     &\leq \|\Delta\psi_n\|\|\nabla\Bar{\varphi}\|^{1/2}\|\Bar{\varphi}\|_{H^2}^{1/2}\|\textbf{w}_n\|^{1/2}\|\nabla\textbf{w}_n\|^{1/2}\\
     &\leq \epsilon\|\Delta\psi_n\|^2+ C\|\nabla\Bar{\varphi}\|\|\Bar{\varphi}\|_{H^2}\|\textbf{w}_n\|\|\nabla\textbf{w}_n\|\\
     &\leq \epsilon\|\nabla\textbf{w}_n\|^2+ \epsilon\|\psi_n\|_{H^2}^2+ C\|\nabla\Bar{\varphi}\|^2\|\Bar{\varphi}\|_{H^2}^2\|\textbf{w}_n\|^2.\\
\end{align*}
From \textbf{[A1]} and the preceding estimates, it follows that
\begin{align}
     \frac{1}{2}\frac{d}{dt}\|\textbf{w}_n\|^2 + 2\eta_0\|\nabla\textbf{w}_n\|^2 &\leq 4\epsilon\|\nabla\textbf{w}_n\|^2 + 4\epsilon\|\psi_n\|_{H^2}^2
     + C\|\textbf{w}_n\|^2\Big(\|\nabla\Bar{\textbf{u}}\|^2 + \|\nabla\Bar{\varphi}\|^2\|\Bar{\varphi}\|_{H^2}^2\Big)\nonumber \\&\hspace{.5cm}+ C_1\|\psi_n\|^2\Big(1+\|\nabla\Bar{\textbf{u}}\|^4+ \|\Delta\Bar{\varphi}\|^8 \Big).\label{eq2.16}
\end{align}
Now add \eqref{eq2.15}, \eqref{eq2.16} and apply the elliptic estimate for $\varphi$. A choice of $\epsilon= \min\{\frac{\eta_0}{5}, \frac{m_0}{26}\}$ results in the following inequality.
\begin{align}
     \frac{1}{2}\frac{d}{dt}(\|\psi_n\|^2 &+ \|\textbf{w}_n\|^2 ) + \frac{m_0}{2}\|\Delta\psi_n\|^2 + \eta_0\|\nabla \textbf{w}_n\|^2 \leq C\|h\|^2 + C\Lambda_1(\|\psi_n\|^2 + \|\textbf{w}_n\|^2 ), 
\end{align}
where $\Lambda_1 = ( 1+\|\Bar{\varphi}\|_{L^\infty}^{2r} + (1+\|\Bar{\varphi}\|_{L^\infty}^{2r})\|\nabla\Bar{\varphi}\|\|\Bar{\varphi}\|_{H^2} + \|\Bar{\varphi}\|_{H^3}^2+ \|\nabla\Bar{\varphi}\|^4\|\Bar{\varphi}\|_{H^2}^4 +\|\nabla\Bar{\textbf{u}}\|^4+ \|\Delta\Bar{\varphi}\|^8 +\|\nabla\Bar{\textbf{u}}\|^2 + \|\nabla\Bar{\varphi}\|^2\|\Bar{\varphi}\|_{H^2}^2)$. 
Since the above relation holds true for each $t\in [0,T]$, we can apply Gronwall's lemma,  and obtain,
\begin{align}\label{eq2.18}
     \sup_{t\in[0,T]}(\|\psi_n\|^2 + \|\textbf{w}_n\|^2 ) + \|\Delta\psi_n\|_{L^2(0,T;H)}^2 + \|\textbf{w}_n\|_{L^2(0,T;\mathbb{V}_{div})}^2 \leq C\big(\|h\|_{L^2(0,T:H)}^2 + \|\psi_n(0)\|^2 + \|\textbf{w}_n(0)\|^2 \big).
\end{align}
 Since the R.H.S. of \eqref{eq2.18} is independent of $t$, we have a uniform bound for the solutions and hence we can extend the solution of the finite dimensional problem to the entire interval $[0,T]$. 
Using the elliptic estimate for $\psi_n$, we have from \eqref{eq2.18},
\begin{equation}
    \|\psi_n\|_{L^2(0,T;H^2)}^2\leq C.
\end{equation}
\\To achieve higher regularity of $\psi_n$, test \eqref{eq2.9} with the test function $-\Delta\psi_n \in H$, we obtain,
\begin{align}
    \frac{1}{2}\frac{d}{dt}\|\nabla\psi_n\|^2 + (\textbf{w}_n\cdot\nabla\Bar{\varphi},-\Delta\psi_n) + \big(m(\Bar{\varphi})\nabla(-\Delta \psi_n + F''(\Bar{\varphi})\psi_n), \nabla(-\Delta\psi_n)\big)& \nonumber\\+ \big(m'(\Bar{\varphi})\psi_n\nabla(-\Delta \Bar{\varphi}+ F'(\Bar{\varphi})), \nabla(-\Delta\psi_n)\big) &= (h,-\Delta\psi_n),
\end{align}
Estimating each term by term by applying similar techniques we get,
\begin{align*}
    |(\textbf{w}_n\cdot\nabla\Bar{\varphi},-\Delta\psi_n)| &\leq \|\textbf{w}_n\|_{L^4}\|\nabla\Bar{\varphi}\|_{L^4}\|\Delta\psi_n\|\\
    &\leq \|\nabla\textbf{w}_n\|\|\nabla\Bar{\varphi}\|^{1/2}\|\Bar{\varphi}\|_{H^2}^{1/2}\|\Delta\psi_n\|\\
    &\leq  \|\nabla\textbf{w}_n\|^2 + \|\nabla\Bar{\varphi}\|\|\Bar{\varphi}\|_{H^2}\|\Delta\psi_n\|^2,\\
    |\big(m(\Bar{\varphi})\nabla(-\Delta \psi_n), \nabla(-\Delta\psi_n)\big)|&\geq m_0\|\nabla^3\psi_n\|^2,\\
    |\big(m(\Bar{\varphi})\nabla(F''(\Bar{\varphi})\psi_n), \nabla(-\Delta\psi_n)\big)| &\leq |\big(m(\Bar{\varphi})(F'''(\Bar{\varphi})\nabla\Bar{\varphi}\psi_n + F''(\Bar{\varphi})\nabla\psi_n ), \nabla(-\Delta\psi_n)\big)|\\
    &\leq m_1\|F'''(\Bar{\varphi})\|_{L^\infty}\|\nabla\Bar{\varphi}\|_{L^4}\|\psi_n\|_{L^4}\|\nabla^3\psi_n\| \\&\hspace{.5cm} + m_1\|F''(\Bar{\varphi})\|_{L^\infty}\|\nabla\psi_n\|\| \nabla^3\psi_n\|\\ 
    &\leq C(1+\|\Bar{\varphi}\|_{L^\infty}^{r-1})\|\nabla\Bar{\varphi}\|^{1/2}\|\Bar{\varphi}\|_{H^2}^{1/2}\|\psi_n\|^{1/2}\|\psi_n\|_{V}^{1/2}\|\nabla^3\psi_n\|\\&\hspace{.5cm}+ C(1+\|\Bar{\varphi}\|_{L^\infty}^{r})\|\nabla\psi_n\|\| \nabla^3\psi_n\|\\
    &\leq \frac{m_0}{4} \|\nabla^3\psi_n\|^2+ C(1+\|\Bar{\varphi}\|_{L^\infty}^{2r-2})\|\nabla\Bar{\varphi}\|\|\Bar{\varphi}\|_{H^2}\|\psi_n\|\|\psi_n\|_{V} \\&\hspace{.5cm}+ C(1+\|\Bar{\varphi}\|_{L^\infty}^{2r})\|\nabla\psi_n\|^2\\
    &\leq \frac{m_0}{4} \|\nabla^3\psi_n\|^2+ C(1+\|\Bar{\varphi}\|_{L^\infty}^{4r-4})\|\nabla\Bar{\varphi}\|^2\|\Bar{\varphi}\|_{H^2}^2\|\psi_n\|^2 \\&\hspace{.5cm}+ C(1+\|\Bar{\varphi}\|_{L^\infty}^{2r})\|\psi_n\|_{V}^2.
\end{align*}
Observe that,
\begin{align*}
     |\big(m'(\Bar{\varphi})\psi_n\nabla(-\Delta \Bar{\varphi}+ F'(\Bar{\varphi})), \nabla(-\Delta\psi_n)\big)| &\leq  \|m'\|_{L^\infty}\|\psi_n\|_{L^\infty}\|\nabla^3\Bar{\varphi}\|\|\nabla^3\psi_n\| \\&\hspace{.5cm}+  \|m'\|_{L^\infty}\|\psi_n\|_{L^4}\|F''(\Bar{\varphi})\|_{L^\infty}\|\nabla\Bar{\varphi}\|_{L^4}\|\nabla^3\psi_n\|\\
     &\leq  C\|\nabla^3\Bar{\varphi}\|\|\psi_n\|^{1/2}\|\psi_n\|_{H^2}^{1/2}\|\nabla^3\psi_n\| \\&\hspace{.5cm}+  C(1+ \|\Bar{\varphi}\|_{L^\infty}^r)\|\nabla\Bar{\varphi}\|^{1/2}\|\Bar{\varphi}\|_{H^2}^{1/2}\|\psi_n\|^{1/2}\|\psi_n\|_{V}^{1/2}\|\nabla^3\psi_n\|\\
      &\leq \frac{m_0}{4}\|\nabla^3\psi_n\|^2+ C\|\nabla^3\Bar{\varphi}\|^2\|\psi_n\|\|\psi_n\|_{H^2} \\&\hspace{.5cm}+  C(1+ \|\Bar{\varphi}\|_{L^\infty}^{2r})\|\nabla\Bar{\varphi}\|\|\Bar{\varphi}\|_{H^2}\|\psi_n\|\|\psi_n\|_{V}\\
      &\leq \frac{m_0}{4}\|\nabla^3\psi_n\|^2+ C\|\nabla^3\Bar{\varphi}\|^2\|\psi_n\|_{H^2}^2 \\&\hspace{.5cm}+  C(1+ \|\Bar{\varphi}\|_{L^\infty}^{2r})\|\nabla\Bar{\varphi}\|\|\Bar{\varphi}\|_{H^2}\|\psi_n\|\|\psi_n\|_{V},\\
      |(h,(-\Delta\psi_n))| &\leq \|h\|\|\Delta\psi_n\|\leq C(\|h\|^2+ \|\Delta\psi_n\|^2).
\end{align*}
Therefore by combining all above estimates, we get for fixed t,
\begin{align}
    \frac{1}{2}\frac{d}{dt}\|\nabla\psi_n\|^2 + \frac{m_0}{2}\|\nabla^3\psi_n\|^2 &\leq C\|h\|^2+ \|\nabla\textbf{w}_n\|^2 + C \big( 1+\|\nabla\Bar{\varphi}\|\|\Bar{\varphi}\|_{H^2} + \|\nabla^3\Bar{\varphi}\|^2 \big)\|\Delta\psi_n\|^2 + C(1+\|\Bar{\varphi}\|_{L^\infty}^{2r})\|\nabla\psi_n\|^2 \nonumber \\&\hspace{.5cm} + C\big(1+\|\Bar{\varphi}\|_{L^\infty}^{4r-4} \|\nabla\Bar{\varphi}\|^2\|\Bar{\varphi}\|_{H^2}^2+ \|\Bar{\varphi}\|_{L^\infty}^{4r} \|\nabla\Bar{\varphi}\|^2\|\Bar{\varphi}\|_{H^2}^2 + \|\nabla^3\Bar{\varphi}\|^2 \big)\|\psi_n\|^2. 
\end{align}
holds for each $t\in [0,T]$. Hence by exploiting Gronwall's lemma and applying \eqref{eq2.18} and uniform estimates for the strong solution, we get,
\begin{align}\label{eq2.22}    
\sup_{t\in[0,T]}\|\psi_n(t)\|_{V}^2 + \|\psi_n\|_{L^2(0,T:H^3)}^2 &\leq C\big(\|h\|_{L^2(0,T:H)}^2 +\|\psi_n(0)\|_{V}^2 + \|\textbf{w}_n(0)\|^2 \big).  
\end{align}
Moreover, for $\zeta \in V$,
\begin{align}
\langle\psi_n',\zeta\rangle + (\Bar{\textbf{u}}\cdot\nabla\psi_n,\zeta)  + (\textbf{w}_n\cdot\nabla\Bar{\varphi},\zeta) + \big(m(\Bar{\varphi})\nabla(-\Delta \psi_n + F''(\Bar{\varphi})\psi_n), \nabla\zeta\big)& \nonumber\\+ \big(m'(\Bar{\varphi})\psi_n\nabla(-\Delta \Bar{\varphi}+ F'(\Bar{\varphi})), \nabla\zeta\big) &= (h,\zeta),    
\end{align}
Observe that,
\begin{align*}
|\langle\psi_n',\zeta\rangle| &\leq \big(\|\Bar{\textbf{u}}\|_{L^4}\|\psi_n\|_{L^4}  + \|\textbf{w}_n\|\|\Bar{\varphi}\|_{L^\infty} + m_1(\|\nabla^3\psi_n\| + \|F'''(\Bar{\varphi})\|_{L^\infty}\|\nabla\Bar{\varphi}\|_{L^4}\|\psi_n\|_{L^4}+ \|F''(\Bar{\varphi})\|_{L^\infty}\|\nabla\psi_n\|) \\&\hspace{.5cm}+ |m'\|_{L^\infty}(\|\psi_n\|_{L^\infty}\|\nabla^3\Bar{\varphi}\|+ \|F''(\Bar{\varphi})\|_{L^\infty}\|\nabla\Bar{\varphi}\|_{L^4}\|\psi_n\|_{L^4})+ \|h\|\big)\|\zeta\|_{V}\\
&\leq \big(\|\nabla\Bar{\textbf{u}}\|\|\psi_n\|^{1/2}\|\psi_n\|_{V}^{1/2}  + \|\textbf{w}_n\|\|\Bar{\varphi}\|_{L^\infty} \\&\hspace{.5cm}+ C(\|\psi_n\|_{H^3} + (1+ \|\Bar{\varphi}\|_{L^\infty}^{r-1})\|\nabla\Bar{\varphi}\|^{1/2}\|\Bar{\varphi}\|_{H^2}^{1/2}\|\psi_n\|^{1/2}\|\psi_n\|_{V}^{1/2}+ (1+ \|\Bar{\varphi}\|_{L^\infty}^{r})\|\nabla\psi_n\|) \\&\hspace{.5cm}+ |m'\|_{L^\infty}(\|\psi_n\|^{1/2}\|\psi_n\|_{H^2}^{1/2}\|\nabla^3\Bar{\varphi}\|+ \|F''(\Bar{\varphi})\|_{L^\infty}\|\nabla\Bar{\varphi}\|^{1/2}\|\Bar{\varphi}\|_{H^2}^{1/2}\|\psi_n\|^{1/2}\|\psi_n\|_{V}^{1/2})+ \|h\|\big)\|\zeta\|_{V}\\
&\leq \Big(\|\nabla\Bar{\textbf{u}}\|^2\|\psi_n\|+ \|\psi_n\|_{V}  + \|\textbf{w}_n\|\|\Bar{\varphi}\|_{L^\infty} \\&\hspace{.5cm}+ C(\|\psi_n\|_{H^3} + (1+ \|\Bar{\varphi}\|_{L^\infty}^{2r-2})\|\nabla\Bar{\varphi}\|\|\Bar{\varphi}\|_{H^2}\|\psi_n\|+  (1+ \|\Bar{\varphi}\|_{L^\infty}^{r})\|\psi_n\|_{V}) \\&\hspace{.5cm}+ C(\|\Bar{\varphi}\|_{H^3}^2\|\psi_n\|+ \|\psi_n\|_{H^2}+ (1+ \|\Bar{\varphi}\|_{L^\infty}^{2r})\|\nabla\Bar{\varphi}\|\|\Bar{\varphi}\|_{H^2}\|\psi_n\|+ \|h\|)\Big)\|\zeta\|_{V}.
\end{align*}
Using uniform estimates \eqref{eq2.18}, \eqref{eq2.22} we can conclude that,
\begin{equation}
    \|\psi_n'\|_{L^2(0,T:V')} \leq C.\label{eq3.24}
\end{equation}
From $\eqref{eq2.10}$,
\begin{align*}
    |\langle\textbf{w}_n', \bm{\nu}\rangle| &\leq  |((\Bar{\textbf{u}}\cdot\nabla)\textbf{w}_n,\bm{\nu})| + |((\textbf{w}_n\cdot\nabla)\Bar{\textbf{u}},\bm{\nu})|+ 2|(\eta(\Bar{\varphi})D\textbf{w}_n, D\bm{\nu})| + 2|(\eta'(\Bar{\varphi})\psi_n D\Bar{\textbf{u}}, \bm{\nu})|  \\&\hspace{.5cm}+ |((\Delta\Bar{\varphi})\nabla\psi_n, \bm{\nu})| +| ((\Delta\psi_n)\nabla\Bar{\varphi}, \bm{\nu})| \\ 
    &\leq \|\Bar{\textbf{u}}\|_{L^4}\|\nabla\textbf{w}_n\|\|\bm{\nu}\|_{L^4} + \|\textbf{w}_n\|_{L^4}\|\nabla\Bar{\textbf{u}}\|\|\bm{\nu}\|_{L^4}+ 2\eta_1\|\nabla\textbf{w}_n\|\|\nabla\bm{\nu}\| + 2\|\eta'\|_{L^\infty}\|\psi_n\|_{L^4}\|\nabla\Bar{\textbf{u}}\|\|\bm{\nu}\|_{L^4}  \\&\hspace{.5cm}+ \|\Delta\Bar{\varphi}\|_{L^4}\|\nabla\psi_n\|\|\bm{\nu}\|_{L^4} + \|\Delta\psi_n\|\|\nabla\Bar{\varphi}\|_{L^4} \|\bm{\nu}\|_{L^4}\\
    &\leq \|\nabla\Bar{\textbf{u}}\|\|\nabla\textbf{w}_n\|\|\nabla\bm{\nu}\| + \|\nabla\textbf{w}_n\|\|\nabla\Bar{\textbf{u}}\|\|\nabla\bm{\nu}\|+ 2\eta_1\|\nabla\textbf{w}_n\|\|\nabla\bm{\nu}\| + C\|\psi_n\|^{1/2}\|\psi_n\|_{V}^{1/2}\|\nabla\Bar{\textbf{u}}\|\|\nabla\bm{\nu}\|  \\&\hspace{.5cm}+ \|\nabla^3\Bar{\varphi}\|^{3/4}\|\Bar{\varphi}\|_{L^\infty}^{1/4}\|\nabla\psi_n\|\|\nabla\bm{\nu}\| + \|\Delta\psi_n\|\|\nabla\Bar{\varphi}\|^{1/2}\|\Bar{\varphi}\|_{H^2}^{1/2} \|\nabla\bm{\nu}\|\\
    &\leq C\Big[\|\nabla\Bar{\textbf{u}}\|\|\nabla\textbf{w}_n\| + \|\nabla\textbf{w}_n\| + \|\nabla\Bar{\textbf{u}}\|\|\psi_n\|_{V} + \|\Bar{\varphi}\|_{H^3}\|\nabla\psi_n\| + \|\Bar{\varphi}\|_{H^2}\|\Delta\psi_n\|\Big] \|\nabla\bm{\nu}\|.
\end{align*}
This holds for all $\bm{\nu}\in \mathbb{V}_{div}$. Consequently,
\begin{equation}  \|{\textbf{w}_n}'\|_{L^2(0,T;\mathbb{V}_{div}')} \leq C.\label{eq3.25}
\end{equation}
\\\textbf{Passing to the limit $n\rightarrow \infty$}: Thanks to the derived uniform estimates, \eqref{eq2.18}, \eqref{eq2.22}, \eqref{eq3.24} and \eqref{eq3.25}, we obtain a weak limit of $(\psi_n, \textbf{w}_n)$, say $(\psi, \textbf{w})$ which solves the linearized system. In particular, we have the following convergence results,
\begin{align*}
    \psi_n &\overset{\ast}{\rightharpoonup} \psi, \text{ in } L^\infty(0,T;V),
    \\\psi_n &\rightharpoonup \psi, \text{ in } L^2(0,T;H^3),
    \\\psi'_n &\rightharpoonup \psi', \text{ in } L^2(0,T;V'),
    \\\textbf{w}_n &\overset{\ast}{\rightharpoonup} \textbf{w}, \text{ in } L^\infty(0,T;\mathbb{G}_{div}),
    \\\textbf{w}_n &\rightharpoonup \textbf{w}, \text{ in } L^2(0,T;\mathbb{V}_{div}),
    \\\textbf{w}_n' &\rightharpoonup \textbf{w}', \text{ in } L^2(0,T;\mathbb{V}_{div}').
\end{align*}
Above convergence results enable us to pass to the limit as $n\rightarrow \infty$ in the weak formulation of finite dimensional problem, \eqref{eq2.9}-\eqref{eq2.12}, and thereby obtain a weak solution to the linearised problem \eqref{eq2.3}-\eqref{eq2.8}. Since the system is linear, uniqueness of the solution can be proved using the uniform a priori estimates derived. 
\end{proof}

\begin{remark}
 By Gagliardo - Nirenberg inequality, the solution of the linearised system, $\psi \in L^2(0,T;H^2(\Omega)) \hookrightarrow L^2(0,T; C(\Bar{\Omega}))$. This regularity of $\psi$ is necessary to derive the first order optimality conditions. Similarly for the velocity, by exploiting the Aubin-Lions lemma, we deduce the regularity, $\textbf{w} \in L^\infty(0,T;\mathbb{G}_{div})\cap H^1(0,T;\mathbb{V}_{div}') \hookrightarrow C([0,T]; \mathbb{G}_{div})$.
\end{remark}

\subsubsection{Differentiability of the control to state operator}

\begin{theorem}\label{theorem3.4}
    Let all the hypotheses of theorem \ref{theorem1} be satisfied. In addition, assume $F \in C^4(\mathbb{R})$. The control to state operator $\mathcal{S}$ as a map from $\mathcal{U}$ to $\mathcal{Z}$ is Fr\'echet differentiable and the derivative of $\mathcal{S}$ at $\Bar{U}$ in the direction of $U$ is given by,
    \begin{equation*}
        \mathcal{S}'(\Bar{U})(U) = (\psi_U, \textbf{w}_U),
    \end{equation*}
 where $(\psi_{U}, \textbf{w}_{U})$ solves the linearized system \eqref{eq2.3}- \eqref{eq2.8} with forcing term $U$.  
\end{theorem}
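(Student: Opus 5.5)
We argue in the standard way: fix $\Bar{U}$ and a perturbation $h\in\mathcal{U}$, set $(\varphi^h,\textbf{u}^h)=\mathcal{S}(\Bar{U}+h)$, $(\Bar{\varphi},\Bar{\textbf{u}})=\mathcal{S}(\Bar{U})$, and let $(\psi_h,\textbf{w}_h)$ be the solution of the linearized system \eqref{eq2.3}--\eqref{eq2.8} with zero initial data and forcing $h$, given by Theorem \ref{theorem2}. Linearity and boundedness of $h\mapsto(\psi_h,\textbf{w}_h)$ from $\mathcal{U}$ into $\mathcal{Z}$ follow from the a priori estimates \eqref{eq2.18}, \eqref{eq2.22}. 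It therefore suffices to show that the remainders $\theta=\varphi^h-\Bar{\varphi}-\psi_h$, $\bm{\rho}=\textbf{u}^h-\Bar{\textbf{u}}-\textbf{w}_h$ satisfy
\begin{equation*}
\|(\theta,\bm{\rho})\|_{\mathcal{Z}}\leq C\|h\|_{L^2(0,T;H)}^{2}.
\end{equation*}
This gives $\|(\theta,\bm{\rho})\|_{\mathcal{Z}}/\|h\|_{\mathcal{U}}\to0$, since $\|h\|_{L^2(0,T;H)}\leq C\|h\|_{\mathcal{U}}$.

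First we write the system satisfied by $(\theta,\bm{\rho})$. Subtract the state equations for $\Bar{U}+h$ and $\Bar{U}$, then subtract \eqref{eq2.3}--\eqref{eq2.4}, writing $\varphi=\varphi^h-\Bar{\varphi}$ and $\textbf{u}=\textbf{u}^h-\Bar{\textbf{u}}$. The forcing $h$ cancels, and $\theta,\bm{\rho}$ satisfy a linear system of the same structure as \eqref{eq2.3}--\eqref{eq2.4}, with zero initial data, plus quadratic source terms. We use Taylor's formula with integral remainder:
\begin{align*}
F'(\varphi^h)-F'(\Bar{\varphi})-F''(\Bar{\varphi})\varphi &= \varphi^2\int_0^1(1-s)F'''(\Bar{\varphi}+s\varphi)\,ds,\\
m(\varphi^h)-m(\Bar{\varphi})-m'(\Bar{\varphi})\varphi &= \varphi^2\int_0^1(1-s)m''(\Bar{\varphi}+s\varphi)\,ds,
\end{align*}
and similarly for $\eta$. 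The resulting sources are:
\begin{itemize}
\item convective products $\textbf{u}\cdot\nabla\varphi$ and $(\textbf{u}\cdot\nabla)\textbf{u}$;
\item $\nabla\cdot\big(m(\varphi^h)\nabla(\varphi^2 R_F)\big)$, where $R_F$ denotes the integral remainder of $F'$;
\item $\nabla\cdot\big((m(\varphi^h)-m(\Bar{\varphi}))\nabla(-\Delta\varphi+F''(\Bar{\varphi})\varphi)\big)$;
\item $\nabla\cdot\big(\varphi^2R_m\nabla\Bar{\mu}\big)$;
\item $\nabla\cdot\big((\eta(\varphi^h)-\eta(\Bar{\varphi}))D\textbf{u}\big)$ and $\nabla\cdot(\varphi^2R_\eta D\Bar{\textbf{u}})$;
\item the capillary terms $(-\Delta\varphi)\nabla\varphi$ and the remainder of $\mu\nabla\varphi$.
\end{itemize}
Every source is at least quadratic in $(\varphi,\textbf{u})$.

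Next we repeat the energy argument of Theorem \ref{theorem2}: test the $\theta$-equation with $\theta$ and the $\bm{\rho}$-equation with $\bm{\rho}$. The linear part is handled exactly as in \eqref{eq2.15}--\eqref{eq2.16}, yielding $\frac{m_0}{2}\|\Delta\theta\|^2+\eta_0\|\nabla\bm{\rho}\|^2$ on the left and $\Lambda_1(\|\theta\|^2+\|\bm{\rho}\|^2)$ on the right. The quadratic sources are moved onto $\nabla\theta$ or $\Delta\theta$ by integration by parts, using the Neumann conditions, and bounded by H\"older, Agmon and Gagliardo--Nirenberg. For example,
\begin{equation*}
\big|\big((m(\varphi^h)-m(\Bar{\varphi}))\nabla\Delta\varphi,\nabla\theta\big)\big|\leq C\|\varphi\|_{L^\infty}\|\varphi\|_{H^3}\|\nabla\theta\|,
\end{equation*}
and
\begin{equation*}
\big|(\varphi^2R_F,\Delta(m\,\cdot\,)\ldots)\big| \lesssim \|\varphi\|_{L^\infty}\|\varphi\|_{H^2}\|\theta\|_{H^2}.
\end{equation*}
The constants in $R_F,R_m,R_\eta$ are controlled because $\Bar{\varphi}$ and $\varphi^h$ are uniformly bounded in $L^\infty(Q)$ by Proposition \ref{prop2}, together with $F\in C^4$ and [A3].

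After Young's inequality, each source contributes terms such as
\begin{equation*}
\|\varphi\|_{L^\infty(0,T;V)}^2\|\varphi\|_{L^2(0,T;H^3)}^2,\qquad \|\textbf{u}\|_{L^\infty(0,T;\mathbb{G}_{div})}^2\|\textbf{u}\|_{L^2(0,T;\mathbb{V}_{div})}^2,
\end{equation*}
or $\|\varphi\|_{L^\infty(0,T;H^2)}^2\|\varphi\|_{L^2(0,T;H^2)}^2$. By the stability estimates \eqref{cd1}, \eqref{cd2}, \eqref{cd3} with identical initial data, each of these is $\leq C\|h\|_{L^2(0,T;H)}^4$. Gronwall's lemma then gives the bound on $\theta$ in $L^\infty(0,T;H)\cap L^2(0,T;H^2)$ and on $\bm{\rho}$ in $L^\infty(0,T;\mathbb{G}_{div})\cap L^2(0,T;\mathbb{V}_{div})$. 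The $H^1(0,T;\mathbb{V}_{div}')$ bound for $\bm{\rho}$ follows by comparison in the equation, as in \eqref{eq3.25}.

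The main obstacle is the highest-order quadratic terms, in particular $\nabla\cdot\big((m(\varphi^h)-m(\Bar{\varphi}))\nabla\Delta\varphi\big)$ and the capillary term $(\Delta\varphi)\nabla\varphi$. These involve third derivatives of $\varphi$, and third derivatives can be controlled only in $L^2(0,T;H^3)$. Two points need care:
\begin{itemize}
\item The derivatives must be distributed so that at most one factor carries the $L^2$-in-time norm while the other carries an $L^\infty$-in-time norm. This is where \eqref{cd3}, and hence $F\in C^4$, is needed: it supplies $\varphi\in L^\infty(0,T;H^2)$, so that $\|\varphi\|_{L^\infty(Q)}$ is bounded uniformly in time.
\item The factor $\|\theta\|_{H^2}$ must be absorbed into the dissipation $\frac{m_0}{2}\|\Delta\theta\|^2$, rather than into a quantity that Gronwall's lemma cannot close.
\end{itemize}
If the direct test with $\theta$ fails to close at that order, the fallback is to test additionally with $-\Delta\theta$, as in \eqref{eq2.22}, and to use the $L^2(0,T;H^3)$ dissipation to absorb these terms.
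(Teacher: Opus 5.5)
Your proposal is correct and follows essentially the same route as the paper. You write the system for the remainders $(\theta,\bm{\rho})$, expand $F'$, $m$, $\eta$ by Taylor's formula, run the $L^2$ energy estimate of Theorem \ref{theorem2} with Gronwall, and use \eqref{cd1}--\eqref{cd3} to make every quadratic source $O(\|h\|_{L^2(0,T;H)}^4)$, so that $\|(\theta,\bm{\rho})\|_{\mathcal{Z}}\le C\|h\|_{L^2(0,T;H)}^2$.

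Your bookkeeping is tidier than the paper's in three places. Your remainders are exact in $\varphi^h-\Bar{\varphi}$. You do not use positivity of $F''$, which the paper invokes and which fails for the double-well potential. You estimate $(\Delta\varphi)\nabla\varphi$ as a genuine source, whereas the paper asserts it vanishes against the velocity remainder. The one caveat, shared with the paper, is that your quadratic remainder $R_\eta$ needs $\eta''$, which \textbf{[A3]} does not provide. With $\eta\in C^1_b$ alone, uniform continuity of $\eta'$ on bounded sets still makes that term $o(\|h\|)$, which suffices for Fr\'echet differentiability.
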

\begin{proof}
Let  $h\in L^2(0,T; V\cap L^\infty(\Omega))$, we denote, $\mathcal{S}(\Bar{U}+h) = (\varphi_h,\textbf{u}_h)$, $\mathcal{S}(\Bar{U})= (\Bar{\varphi}, \Bar{\textbf{u}})$. We define 
$$ \rho = \varphi_h-\Bar{\varphi}-\psi_h, \hspace{.5cm} \textbf{v}= \textbf{u}_h-\Bar{\textbf{u}}-\textbf{w}_h,$$
where $(\psi_h,\textbf{w}_h),$ is the solution to the linearised system \eqref{eq2.3}- \eqref{eq2.8} with forcing term $h$.
For the computational purpose we denote $\xi= \varphi_h-\Bar{\varphi}$ and $\bm{\tau} = \textbf{u}_h-\Bar{\textbf{u}}$. Then $(\rho, \textbf{v})$ satisfies,

\begin{align}
    \rho' + \bm{\tau}\cdot\nabla\xi + \Bar{\textbf{u}}\cdot\nabla\rho + \textbf{v}\cdot\nabla\Bar{\varphi} &= \nabla\cdot(m(\Bar{\varphi})\nabla(-\Delta\rho))+ \nabla\cdot \big(m(\Bar{\varphi})\nabla(F'(\varphi_h)-F'(\Bar{\varphi})-F''(\Bar{\varphi})\psi) \big)\nonumber\\&\hspace{.5cm} \nabla\cdot(m'(\Bar{\varphi})\xi\nabla(-\Delta\xi))+ \nabla\cdot \big(m'(\Bar{\varphi})\xi\nabla(F'(\varphi_h)-F'(\Bar{\varphi})) \big)\nonumber\\&\hspace{.5cm}
    \nabla\cdot \big((m(\varphi_h)-m(\Bar{\varphi})-m'(\Bar{\varphi})\psi)\nabla(-\Delta\Bar{\varphi}+F'(\Bar{\varphi}))\big), \text{ in } Q, \label{eq2.30}\\
    \textbf{v}'+ (\bm{\tau}\cdot\nabla)\bm{\tau}+ (\Bar{\textbf{u}}\cdot\nabla)\textbf{v} + (\textbf{v}\cdot\nabla)\Bar{\textbf{u}}+ \nabla p^\ast &= \nabla\cdot(2\eta(\Bar{\varphi})D\textbf{v}) + \nabla\cdot(2\eta'(\Bar{\varphi})\xi D\bm{\tau}) + (-\Delta\Bar{\varphi})\nabla\rho+ (-\Delta\xi)\nabla\xi  \nonumber \\
    &\hspace{.5cm} +\nabla\cdot(2(\eta(\varphi_h)-\eta(\Bar{\varphi})-\eta'(\Bar{\varphi})\psi)D\Bar{\textbf{u}}) + (-\Delta\rho)\nabla\Bar{\varphi},  \text{ in } Q, \label{eq2.31}\\
    \nabla\cdot\textbf{v} &= 0, \text{ in } Q, \label{eq2.32}\\
    \textbf{v}&=\textbf{0},  \text{ on } \Sigma, \label{eq2.33}\\
    \frac{\partial\rho}{\partial \textbf{n}} =0,\hspace{.25cm} \frac{\partial(\Delta\rho)}{\partial \textbf{n}} &=0, \text{ on } \Sigma,\label{eq2.34}\\
    \rho(x,0) = \rho_0(x),\hspace{.25cm} \textbf{v}(x,0) &= \textbf{v}_0(x), \text{ in } \Omega.\label{eq2.35}
    \end{align}
It is worth noting that, 
\begin{align}\label{DS1}
\frac{\|(\mathcal{S}(\Bar{U}+ h )- \mathcal{S}(\Bar{U})- (\psi_{h}, \textbf{w}_{h}))\|_{\mathcal{Z}}}{\|h\|_{\mathcal{U}}} =  \frac{\|( (\varphi_h, \textbf{u}_h)- (\Bar{\varphi},\Bar{\textbf{u}})- (\psi_{h}, \textbf{w}_{h}))\|_{\mathcal{Z}}}{\|h\|_{\mathcal{U}}} =  \frac{\|(\rho, \textbf{v})\|_{\mathcal{Z}}}{\|h\|_{\mathcal{U}}}
\end{align}
In order to prove the differentiability of $\mathcal{S}$, it is enough to prove that the solution of the above system, $(\rho, \textbf{v})$ satisfies,
\begin{align}\label{DS}
    \frac{\|(\rho, \textbf{v})\|_{\mathcal{Z}}}{\|h\|_{\mathcal{U}}} \rightarrow 0, \hspace{.5cm}\text{ as }\|h\|_{\mathcal{U}}\rightarrow 0.
\end{align}
With this aim, we derive estimates for $\rho$ and $\bm{\tau}$ in $\mathcal{Z}$. First, we test \eqref{eq2.30} with $\rho$ which results in,
\begin{align}\label{eq234}
    \frac{1}{2}\frac{d}{dt}\|\rho\|^2 + (\bm{\tau}\cdot\nabla\xi,\rho) + (\Bar{\textbf{u}} \cdot\nabla\rho,\rho) + (\textbf{v}\cdot\nabla\Bar{\varphi}, \rho) &= -(m(\Bar{\varphi})\nabla(-\Delta\rho), \nabla\rho)- (m(\Bar{\varphi})\nabla(F'(\varphi_h)-F'(\Bar{\varphi})-F''(\Bar{\varphi})\psi), \nabla\rho )\nonumber\\&\hspace{.5cm} -(m'(\Bar{\varphi})\xi\nabla(-\Delta\xi), \nabla\rho)-  (m'(\Bar{\varphi})\xi\nabla(F'(\varphi_h)-F'(\Bar{\varphi})), \nabla\rho)\nonumber\\&\hspace{.5cm}
    -((m(\varphi_h)-m(\Bar{\varphi})-m'(\Bar{\varphi})\psi)\nabla(-\Delta\Bar{\varphi}+F'(\Bar{\varphi})), \nabla\rho).
\end{align}
Estimating each term by term by applying H\"olders, Gagliardo-Nirenberg, Ladyzhenskaya, Young's inequalities we get,
\begin{align*}
    |(\bm{\tau}\cdot\nabla\xi,\rho)| &\leq \|\bm{\tau}\|_{L^4}\|\nabla\xi\|_{L^4}\|\rho\|\\
    &\leq \frac{1}{2}\|\bm{\tau}\|\|\nabla\bm{\tau}\|\|\nabla\xi\|\|\xi\|_{H^2}+ \frac{1}{2}\|\rho\|^2,
    \\(\Bar{\textbf{u}}\cdot\nabla\rho,\rho) &= \int_\Omega\Bar{\textbf{u}}\cdot\nabla\big(\frac{\rho^2}{2}\big)= 0,\\    
    |(\textbf{v}\cdot\nabla\Bar{\varphi}, \rho)| &\leq \|\textbf{v}\|_{L^4}\|\nabla\Bar{\varphi}\|_{L^4}\|\rho\| \leq \|\nabla\textbf{v}\|\|\nabla\Bar{\varphi}\|^{1/2}\|\Bar{\varphi}\|_{H^2}^{1/2}\|\rho\|\\
    &\leq \epsilon\|\nabla\textbf{v}\|^2 + \|\nabla\Bar{\varphi}\|\|\Bar{\varphi}\|_{H^2}\|\rho\|^2,\\
    (m(\Bar{\varphi})\nabla(-\Delta\rho), \nabla\rho) &= (m(\Bar{\varphi})(\Delta\rho), \Delta\rho) - (m'(\Bar{\varphi})\nabla\Bar{\varphi}(-\Delta\rho), \nabla\rho).
\end{align*}
In the above expression, estimating each of its terms separately, we obtain,
\begin{align*}
   (m(\Bar{\varphi})\Delta\rho, \Delta\rho) &\geq m_0\|\Delta\rho\|^2,\\
   |(m'(\Bar{\varphi})\nabla\Bar{\varphi}(-\Delta\rho), \nabla\rho)| &\leq \|m'\|_{L^\infty}\|\nabla\Bar{\varphi}\|_{L^4}\|\nabla\rho\|_{L^4}\|\Delta\rho\|\\
   &\leq C\|\nabla\Bar{\varphi}\|^{1/2}\|\Bar{\varphi}\|_{H^2}^{1/2}\|\rho\|_{H^2}^{3/4}\|\rho\|^{1/4}\|\Delta\rho\|\\
   &\leq \epsilon\|\Delta\rho\|^2 + C\|\nabla\Bar{\varphi}\|\|\Bar{\varphi}\|_{H^2}\|\rho\|_{H^2}^{3/2}\|\rho\|^{1/2}\\
   &\leq 2\epsilon\|\rho\|_{H^2}^2 + C\|\nabla\Bar{\varphi}\|^2\|\Bar{\varphi}\|_{H^2}^2\|\rho\|_{H^2}\|\rho\|\\
   &\leq 3\epsilon\|\rho\|_{H^2}^2 + C\|\nabla\Bar{\varphi}\|^4\|\Bar{\varphi}\|_{H^2}^4\|\rho\|^2.
\end{align*}
Using Taylor's expansion we have,
\begin{align*}
    F'(\varphi_h) &= F'(\Bar{\varphi}) + F''(\Bar{\varphi})(\varphi_h-\Bar{\varphi}) + F'''(r\varphi_h + (1-r)\Bar{\varphi})(\varphi_h-\Bar{\varphi})^2,
\end{align*}
for some $r \in [0,1]$. Observe further that  $F'(\varphi_h)-F'(\Bar{\varphi})-F''(\Bar{\varphi})\psi$ can be written in the following way that enables us to derive an expression for its derivative. We follow similar methods as in \cite[Theorem 3.4]{FRS}.
\begin{align}\label{F11}
F'(\varphi_h)-F'(\Bar{\varphi})-F''(\Bar{\varphi})\psi &= \xi\int\limits_{0}^{1} [F''(r\varphi_h +(1-r)\Bar{\varphi})- F''(\Bar{\varphi})]dr + F''(\Bar{\varphi})\rho.
\end{align}
This leads to  the following expression for $\nabla\big( F'(\varphi_h)-F'(\Bar{\varphi})-F''(\Bar{\varphi})\psi \big)$,
\begin{align*}
  &\nabla\Big( F'(\varphi_h)-F'(\Bar{\varphi})-F''(\Bar{\varphi})\psi \Big) \\&= \nabla\xi\int\limits_{0}^{1} [F''(r\varphi_h +(1-r)\Bar{\varphi})- F''(\Bar{\varphi})]dr + \xi\int\limits_{0}^{1} [F'''(r\varphi_h +(1-r)\Bar{\varphi})(r\nabla\varphi_h +(1-r)\nabla\Bar{\varphi})- F'''(\Bar{\varphi})\nabla\Bar{\varphi}]dr \\&\hspace{.5cm}+ F'''(\Bar{\varphi})\nabla\Bar{\varphi}\rho + F''(\Bar{\varphi})\nabla\rho  \\
  &= \nabla\xi\int\limits_{0}^{1}\int\limits_{0}^{1} [F'''(s(r\varphi_h +(1-r)\Bar{\varphi}) + (1-s)\Bar{\varphi})(r\varphi_h+(1-r)\Bar{\varphi} - \Bar{\varphi})]dr + \xi\int\limits_{0}^{1} [F'''(r\varphi_h +(1-r)\Bar{\varphi})(r\nabla\varphi_h \\&\hspace{.5cm}+(1-r)\nabla\Bar{\varphi}-\nabla\Bar{\varphi})]dr  + \xi\int\limits_{0}^{1} [(F'''(r\varphi_h+ (1-r)\Bar{\varphi})- F'''(\Bar{\varphi}))\nabla\Bar{\varphi}]dr + F'''(\Bar{\varphi})\nabla\Bar{\varphi}\rho + F''(\Bar{\varphi})\nabla\rho \\
   &= \nabla\xi\int\limits_{0}^{1}\int\limits_{0}^{1} [ F'''(s(r\varphi_h +(1-r)\Bar{\varphi}) + (1-s)\Bar{\varphi})r\xi ]dr + \xi\int\limits_{0}^{1} [F'''(r\varphi_h +(1-r)\Bar{\varphi}) r\nabla\xi]dr \\
  &\hspace{.5cm}+ \xi\int\limits_{0}^{1} [F^{(4)}(s(r\varphi_h +(1-r)\Bar{\varphi}) + (1-s)\Bar{\varphi})r\xi\nabla\Bar{\varphi}]dr + F'''(\Bar{\varphi})\nabla\Bar{\varphi}\rho + F''(\Bar{\varphi})\nabla\rho 
\end{align*} 
 Hence, 
\begin{equation}\label{F12}
    \nabla\big( F'(\varphi_h)-F'(\Bar{\varphi})-F''(\Bar{\varphi})\psi \big) =  P_h\xi\nabla\xi + Q_h\xi^2\nabla\Bar{\varphi}+ F'''(\Bar{\varphi})\nabla\Bar{\varphi}\rho + F''(\Bar{\varphi})\nabla\rho 
\end{equation}
Where, 
\begin{align*}
P_h &= \int\limits_{0}^{1}\int\limits_{0}^{1} [ F'''(s(r\varphi_h +(1-r)\Bar{\varphi}) + (1-s)\Bar{\varphi})r + F'''(r\varphi_h +(1-r)\Bar{\varphi}) r] ds\ dr,\\
Q_h &= \int\limits_{0}^{1} [F^{(4)}(s(r\varphi_h +(1-r)\Bar{\varphi}) + (1-s)\Bar{\varphi})r]dr
\end{align*}
are bounded in $L^\infty(\Omega)$ using the uniform global bound for the strong solution $\Bar{\varphi}$. Therefore, 
\begin{equation*}
   (m(\Bar{\varphi})\nabla(F'(\varphi_h)-F'(\Bar{\varphi})-F''(\Bar{\varphi})\psi), \nabla\rho ) = (m(\Bar{\varphi})(P_h\xi\nabla\xi + Q_h\xi^2\nabla\Bar{\varphi} + F''(\Bar{\varphi})\nabla\rho + \rho F'''(\Bar{\varphi})\nabla\Bar{\varphi}), \nabla\rho )
\end{equation*}
Since $m$ and $F''$ are bounded below by a positive constant, there exists a constant $c_0 > 0$ such that,
\begin{equation*}
    (m(\Bar{\varphi})F''(\Bar{\varphi})\nabla\rho  , \nabla\rho) \geq c_0\|\nabla\rho\|^2,
\end{equation*}
The remaining terms can be estimated as follows: 
\begin{align*}
    |(m(\Bar{\varphi})(P_h\xi\nabla\xi + Q_h\xi^2\nabla\Bar{\varphi} + \rho F'''(\Bar{\varphi})\nabla\Bar{\varphi}), \nabla\rho )| 
    &\leq C\|\xi\|_{L^4}\|\nabla\xi\|\|\nabla\rho\|_{L^4} + \|\nabla\Bar{\varphi}\|_{L^4}\|\xi\|_{L^4}^2\|\nabla\rho\|_{L^4} + \|\nabla\Bar{\varphi}\|\|\rho\|_{L^4}\|\nabla\rho\|_{L^4} \\
    &\leq C\|\xi\|_{V}^{1/2}\|\xi\|^{1/2}\|\nabla\xi\|\|\rho\|_{H^2}^{3/4}\|\rho\|^{1/4} + \|\Bar{\varphi}\|_{H^2}^{1/2}\|\nabla\Bar{\varphi}\|^{1/2}\|\xi\|_{V}\|\xi\|\|\rho\|_{H^2}^{3/4}\|\rho\|^{1/4} \\
    &\hspace{.5cm}+ \|\nabla\Bar{\varphi}\|\|\rho\|_{H^2}^{1/4}\|\rho\|^{3/4}\|\rho\|_{H^2}^{3/4}\|\rho\|^{1/4}\\
    &\leq C\|\xi\|_{V}\|\xi\|\|\nabla\xi\|^2 + \|\rho\|_{H^2}^{3/2}\|\rho\|^{1/2} + \|\Bar{\varphi}\|_{H^2}\|\nabla\Bar{\varphi}\|\|\xi\|_{V}^2\|\xi\|^2  \\
    &\hspace{.5cm}+ \|\nabla\Bar{\varphi}\|\|\rho\|_{H^2}\|\rho\|\\
    &\leq \epsilon\|\rho\|_{H^2}^2 + C\|\xi\|_{V}^3\|\xi\| + \|\rho\|_{H^2}\|\rho\| + \|\Bar{\varphi}\|_{H^2}^2\|\xi\|_{V}^2\|\xi\|^2 + \|\nabla\Bar{\varphi}\|^2\|\rho\|^2\\
    &\leq 2\epsilon\|\rho\|_{H^2}^2 + \|\xi\|_{V}^3\|\xi\| + \|\Bar{\varphi}\|_{H^2}^2\|\xi\|_{V}^2\|\xi\|^2 + C(1 +\|\nabla\Bar{\varphi}\|^2 )\|\rho\|^2
\end{align*}

By similar techniques,
\begin{align*}
  |(m'(\Bar{\varphi})\xi\nabla(-\Delta\xi), \nabla\rho)| &\leq \|m'\|_{L^\infty}\|\xi\|_{L^4}\|\nabla^3\xi\|\|\nabla\rho\|_{L^4}\\
    &\leq C\|\xi\|^{1/2}\|\xi\|_{V}^{1/2}\|\nabla^3\xi\|\|\rho\|_{H^2}^{3/4}\|\rho\|^{1/4}\\
    &\leq C\|\xi\|\|\xi\|_{V}\|\nabla^3\xi\|^2 + C\|\rho\|_{H^2}^{3/2}\|\rho\|^{1/2}\\
    &\leq \epsilon\|\rho\|_{H^2}^2 + C\|\xi\|\|\xi\|_{V}\|\nabla^3\xi\|^2 + C\|\rho\|_{H^2}\|\rho\|\\&\leq 2\epsilon\|\rho\|_{H^2}^2 + C\|\xi\|\|\xi\|_{V}\|\xi\|_{H^3}^2 + C\|\rho\|^2.
\end{align*}

\begin{align*}
    |(m'(\Bar{\varphi})\xi\nabla(F'(\varphi_h)-F'(\Bar{\varphi})), \nabla\rho)| &= |(m'(\Bar{\varphi})\xi((F''(\varphi_h)-F''(\Bar{\varphi}))\nabla\varphi_h + F''(\Bar{\varphi})\nabla\xi), \nabla\rho)| \\
    &\leq \|m'\|_{L^\infty}\|F'''(\Hat{\varphi})\|_{L^\infty}\|\xi\|_{L^4}^2\|\nabla\varphi_h\|_{L^4}\|\nabla\rho\|_{L^4} + \|m'\|_{L^\infty}\|F''(\Bar{\varphi})\|_{L^\infty}\|\xi\|_{L^4}\|\nabla\xi\|\|\nabla\rho\|_{L^4}\\
    &\leq C\|\varphi_h\|_{H^2}^{1/2}\|\nabla\varphi_h\|^{1/2}\|\xi\|_{V}\|\xi\|\|\rho\|_{H^2}^{3/4}\|\rho\|^{1/4} + C\|\xi\|_{V}^{1/2}\|\xi\|^{1/2}\|\nabla\xi\|\|\rho\|_{H^2}^{3/4}\|\rho\|^{1/4}\\
    &\leq C\|\varphi_h\|_{H^2}\|\varphi_h\|\|\xi\|_{V}^2\|\xi\|^2 + \|\rho\|_{H^2}^{3/2}\|\rho\|^{1/2}+ C\|\xi\|_{V}\|\xi\|\|\nabla\xi\|^2\\
    &\leq \epsilon\|\rho\|_{H^2}^2 + C\|\varphi_h\|_{H^2}^{2}\|\xi\|_{V}^2\|\xi\|^2 + C\|\rho\|_{H^2}\|\rho\|+ C\|\xi\|_{V}^3\|\xi\|\\
    &\leq 2\epsilon\|\rho\|_{H^2}^2 + C\|\varphi_h\|_{H^2}^2\|\xi\|_{V}^2\|\xi\|^2 + C\|\rho\|^2+ C\|\xi\|_{V}^3\|\xi\|
\end{align*}
The second inequality is obtained by substituting for $(F''(\varphi_h)-F''(\Bar{\varphi}))$ from  Taylor's formula. Further, we used assumptions \eqref{F1}, \eqref{eqF4} and the regularity of $\varphi$ to get a $L^\infty$ bound for $F''(\bar{\varphi})$ and $F'''(\Bar{\varphi})$.
Similarly the last term of \eqref{eq234} can be bounded as follows,

\begin{align*}
   |((m(\varphi_h)-m(\Bar{\varphi})-m'(\Bar{\varphi})\psi)\nabla\Bar{\mu}, \nabla\rho)| &= |(m''(\Hat{\varphi})\xi^2 + m'(\Bar{\varphi})\rho)\nabla\Bar{\mu}, \nabla\rho)|
   \\&\leq \|m''\|_{L^\infty}\|\nabla\Bar{\mu}\|_{L^4}\|\xi\|_{L^4}^2\|\nabla\rho\|_{L^4} + \|m'\|_{L^\infty}\|\nabla\Bar{\mu}\|\|\rho\|_{L^4}\|\nabla\rho\|_{L^4} \\
   &\leq C\|\Bar{\mu}\|_{H^2}^{1/2}\|\nabla\Bar{\mu}\|^{1/2}\|\xi\|_{V}\|\xi\|\|\rho\|_{H^2}^{3/4}\|\rho\|^{1/4} \\&\hspace{.5cm}+ C\|\nabla\Bar{\mu}\|\|\rho\|_{H^2}^{1/4}\|\rho\|^{3/4}\|\rho\|_{H^2}^{3/4}\|\rho\|^{1/4}\\
   &\leq C\|\Bar{\mu}\|_{H^2}\|\nabla\Bar{\mu}\|\|\xi\|_{V}^2\|\xi\|^2 + \|\rho\|_{H^2}^{3/2}\|\rho\|^{1/2} + C\|\nabla\Bar{\mu}\|\|\rho\|_{H^2}\|\rho\|\\
   &\leq 2\epsilon\|\rho\|_{H^2}^{2} + C\|\Bar{\mu}\|_{H^2}\|\nabla\Bar{\mu}\|\|\xi\|_{V}^2\|\xi\|^2 + C\|\rho\|_{H^2}\|\rho\| + C\|\nabla\Bar{\mu}\|^2\|\rho\|^2\\
   &\leq 3\epsilon\|\rho\|_{H^2}^{2}  + C\|\Bar{\mu}\|_{H^2}\|\nabla\Bar{\mu}\|\|\xi\|_{V}^2\|\xi\|^2 + C\|\rho\|^2 + C\|\nabla\Bar{\mu}\|^2\|\rho\|^2
\end{align*}   

By combining all preceding estimates, we obtain,

\begin{align}\label{eq2.37}
    \frac{1}{2}\frac{d}{dt}\|\rho\|^2 + m_0\|\Delta\rho\|^2 + c_0\|\nabla\rho\|^2&\leq 12\epsilon\|\rho\|_{H^2}^2  + \epsilon\|\nabla\textbf{v}\|^2 + \|\bm{\tau}\|\|\nabla\bm{\tau}\|\|\nabla\xi\|\|\xi\|_{H^2} + C\|\xi\|_{V}^3\|\xi\| \nonumber \\&\hspace{.5cm} + C\Big[ 1+ \|\nabla\Bar{\varphi}\|\|\Bar{\varphi}\|_{H^2} + \|\nabla\Bar{\varphi}\|^4\|\Bar{\varphi}\|_{H^2}^4 + \|\nabla\Bar{\varphi}\|^2 + \|\nabla\Bar{\mu}\|^2\Big]\|\rho\|^2 \nonumber \\&\hspace{.5cm}+ C\Big[ \|\Bar{\varphi}\|_{H^2}^2 + \|\Bar{\mu}\|_{H^2}\|\nabla\Bar{\mu}\| + \|\varphi_h\|_{H^2}^2 \Big]\|\xi\|_{V}^2\|\xi\|^2  +  C\|\xi\|\|\xi\|_{V}\|\xi\|_{H^3}^2 
\end{align}

Now we test \eqref{eq2.31} with $\textbf{v}$. This yields,
\begin{align*}
    \frac{1}{2}\frac{d}{dt}\|\textbf{v}\|^2+ ((\bm{\tau}\cdot\nabla)\bm{\tau}, \textbf{v})+ ((\Bar{\textbf{u}}\cdot\nabla)\textbf{v},\textbf{v}) + ((\textbf{v}\cdot\nabla)\Bar{\textbf{u}}, \textbf{v}) + 2(\eta(\Bar{\varphi})D\textbf{v}, D\textbf{v})+ 2(\eta'(\Bar{\varphi})\xi D\bm{\tau}, D\textbf{v}) &\nonumber \\+ 2((\eta(\varphi_h)-\eta(\Bar{\varphi})-\eta'(\Bar{\varphi})\psi)D\Bar{\textbf{u}}, D\textbf{v})
    +((\Delta\Bar{\varphi})\nabla\rho, \textbf{v}) + ((\Delta\xi)\nabla\xi, \textbf{v}) + ((\Delta\rho)\nabla\Bar{\varphi}, \textbf{v}) &= 0.
\end{align*}
We estimate each term by  applying standard techniques/ inequalities.
\begin{align*}
    |((\bm{\tau}\cdot\nabla)\bm{\tau}, \textbf{v})|&=  |((\bm{\tau}\cdot\nabla)\textbf{v}, \bm{\tau})| \leq \|\nabla\textbf{v}\|\|\bm{\tau}\|_{L^4}^2 \leq \|\nabla\textbf{v}\|\|\bm{\tau}\|\|\nabla\bm{\tau}\|\\ &\leq \epsilon \|\nabla\textbf{v}\|^2 + C\|\bm{\tau}\|^2\|\nabla\bm{\tau}\|^2, \\
    |((\Bar{\textbf{u}}\cdot\nabla)\textbf{v},\textbf{v})| &\leq \|\Bar{\textbf{u}}\|_{L^4}\|\nabla\textbf{v}\|\|\textbf{v}\|_{L^4} \leq \|\Bar{\textbf{u}}\|^{1/2}\|\nabla\Bar{\textbf{u}}\|^{1/2}\|\nabla\textbf{v}\|\|\textbf{v}\|^{1/2}\|\nabla\textbf{v}\|^{1/2}\\
    &\leq \epsilon\|\nabla\textbf{v}\|^2 + C\|\Bar{\textbf{u}}\|\|\nabla\Bar{\textbf{u}}\|\|\textbf{v}\|\|\nabla\textbf{v}\|\\
    &\leq 2\epsilon\|\nabla\textbf{v}\|^2 + C\|\Bar{\textbf{u}}\|^2\|\nabla\Bar{\textbf{u}}\|^2\|\textbf{v}\|^2,\\
    |((\textbf{v}\cdot\nabla)\Bar{\textbf{u}}, \textbf{v})| &\leq \|\nabla\Bar{\textbf{u}}\|\|\textbf{v}\|_{L^4}^2 \leq \|\nabla\Bar{\textbf{u}}\|\|\textbf{v}\|\|\nabla\textbf{v}\|\\
    &\leq \epsilon \|\nabla\textbf{v}\|^2 + C\|\nabla\Bar{\textbf{u}}\|^2\|\textbf{v}\|^2,\\
    (\eta(\Bar{\varphi})D\textbf{v}, D\textbf{v}) &\geq \eta_0\|\nabla\textbf{v}\|^2,\\
  |(\eta'(\Bar{\varphi})\xi D\bm{\tau}, D\textbf{v})| &\leq \|\eta'\|_{L^\infty}\|\xi\|_{L^\infty}\|\nabla\bm{\tau}\|\|\nabla\textbf{v}\| \leq C\|\xi\|^{1/2}\|\xi\|_{H^2}^{1/2}\|\nabla\bm{\tau}\|\|\nabla\textbf{v}\|\\ &\leq \epsilon\|\nabla\textbf{v}\|^2 + C\|\xi\|\|\xi\|_{H^2}\|\nabla\bm{\tau}\|^2, 
\end{align*}
\begin{align*}
    |((\eta(\varphi_h)-\eta(\Bar{\varphi})-\eta'(\Hat{\varphi})\psi)D\Bar{\textbf{u}}, D\textbf{v})| &\leq |((\eta''(\Bar{\varphi})\xi^2+ \eta'(\Bar{\varphi})\rho) D\Bar{\textbf{u}}, D\textbf{v})|\\
    &\leq \|\eta''\|_{L^\infty}\|\xi\|_{L^8}^2\|\nabla\Bar{\textbf{u}}\|_{L^4}\|\nabla\textbf{v}\| + \|\eta'\|_{L^\infty}\|\rho\|_{L^4}\|\nabla\Bar{\textbf{u}}\|_{L^4}\|\nabla\textbf{v}\|\\
    &\leq C\|\xi\|_{V}^{3/2}\|\xi\|^{1/2}\|\nabla\Bar{\textbf{u}}\|^{1/2}\|\Bar{\textbf{u}}\|_{\textbf{H}^2}^{1/2}\|\nabla\textbf{v}\| + C\|\rho\|_{V}^{1/2}\|\rho\|^{1/2}\|\nabla\Bar{\textbf{u}}\|^{1/2}\|\Bar{\textbf{u}}\|_{\textbf{H}^2}^{1/2}\|\nabla\textbf{v}\|\\
    &\leq 2\epsilon \|\nabla\textbf{v}\|^2 + C\|\nabla\Bar{\textbf{u}}\|\|\Bar{\textbf{u}}\|_{\textbf{H}^2}\|\xi\|_{V}^3\|\xi\| + \|\rho\|_{V}\|\rho\|\|\nabla\Bar{\textbf{u}}\|\|\Bar{\textbf{u}}\|_{\textbf{H}^2}\\
    &\leq 2\epsilon \|\nabla\textbf{v}\|^2 + \epsilon\|\rho\|_{V}^2 + C\|\nabla\Bar{\textbf{u}}\|\|\Bar{\textbf{u}}\|_{\textbf{H}^2}\|\xi\|_{V}^3\|\xi\| + \|\rho\|^2\|\nabla\Bar{\textbf{u}}\|^2\|\Bar{\textbf{u}}\|_{\textbf{H}^2}^2,\\
   |(\nabla\Bar{\varphi}\Delta\rho, \textbf{v})| &\leq \|\nabla\Bar{\varphi}\|_{L^4}\|\Delta\rho\|\|\textbf{v}\|_{L^4} \\& \leq \|\nabla\Bar{\varphi}\|^{1/2}\|\Bar{\varphi}\|_{H^2}^{1/2}\|\Delta\rho\|\|\textbf{v}\|^{1/2}\|\nabla\textbf{v}\|^{1/2}
    \\& \leq \epsilon\|\Delta\rho\|^2 + C\|\nabla\Bar{\varphi}\|\|\Bar{\varphi}\|_{H^2}\|\textbf{v}\|\|\nabla\textbf{v}\|
    \\& \leq \epsilon\|\Delta\rho\|^2 + \epsilon \|\nabla\textbf{v}\|^2 + C\|\nabla\Bar{\varphi}\|^2\|\Bar{\varphi}\|_{H^2}^2\|\textbf{v}\|^2,\\|(\Delta\Bar{\varphi}\nabla\rho, \textbf{v})|= |\big(\nabla(\nabla\Bar{\varphi}\cdot\nabla\rho) - \nabla\Bar{\varphi}\Delta\rho , \textbf{v}\big)| &= |(\nabla\Bar{\varphi}\Delta\rho, \textbf{v})| \\&\leq \epsilon\|\Delta\rho\|^2 + \epsilon \|\nabla\textbf{v}\|^2 + C\|\nabla\Bar{\varphi}\|^2\|\nabla\Bar{\varphi}\|_{H^2}^2\|\textbf{v}\|^2.
\end{align*}
Observe that, $(\Delta\xi\nabla\xi, \textbf{v}) = 0$. Then by combining all above estimates we get,
\begin{align}\label{eq2.38}
   \frac{1}{2}\frac{d}{dt}\|\textbf{v}\|^2 +  \eta_0\|\nabla\textbf{v}\|^2 &\leq 10\epsilon \|\nabla\textbf{v}\|^2 + 2\epsilon\|\Delta\rho\|^2 + \epsilon\|\nabla\rho\|^2+ C\|\nabla\bm{\tau}\|^2\|\bm{\tau}\|^2 \nonumber\\&\hspace{.5cm}+ C\|\nabla\Bar{\textbf{u}}\|\|\Bar{\textbf{u}}\|_{\textbf{H}^2} \|\xi\|_{V}^3\|\xi\| + C\|\xi\|\|\xi\|_{H^2}\|\nabla\bm{\tau}\|^2 \nonumber\\
   & \hspace{.5cm}+ C\big( 1+ \|\nabla\Bar{\textbf{u}}\|^2+ \|\Bar{\textbf{u}}\|^2\|\nabla\Bar{\textbf{u}}\|^2 + \|\nabla\Bar{\varphi}\|^2\|\Bar{\varphi}\|_{H^2}^2 \big)\|\textbf{v}\|^2 
   \nonumber\\
   & \hspace{.5cm}+ C( 1+ \|\nabla\Bar{\textbf{u}}\|^2\|\Bar{\textbf{u}}\|_{\textbf{H}^2}^2)\|\rho\|^2.
\end{align}
Adding \eqref{eq2.37} and \eqref{eq2.38}, and for $\epsilon = \min\{ \frac{m_0}{28}, \frac{\eta_0}{24}, \frac{c_0}{2}\}$, we get,
\begin{align}
    \frac{1}{2}\frac{d}{dt}(\|\rho\|^2 + \|\textbf{v}\|^2) + \frac{m_0}{2}\|\Delta\rho\|^2 + \frac{\eta_0}{2}\|\nabla\textbf{v}\|^2  &\leq \Lambda_1 + \Lambda(\|\rho\|^2 + \|\textbf{v}\|^2),
\end{align}
where $\Lambda = C\big( 1+ \|\nabla\Bar{\varphi}\|\|\Bar{\varphi}\|_{H^2} +  \|\nabla\Bar{\varphi}\|^2\|\Bar{\varphi}\|_{H^2}^2+ \|\nabla\Bar{\varphi}\|^4\|\Bar{\varphi}\|_{H^2}^4 +\|\nabla\Bar{\textbf{u}}\|^2+ \|\Bar{\textbf{u}}\|^2\|\nabla\Bar{\textbf{u}}\|^2 + \|\nabla\Bar{\textbf{u}}\|^2\|\Bar{\textbf{u}}\|_{\textbf{H}^2}^2 + \|\nabla\Bar{\varphi}\|^2 +  \|\nabla\Bar{\mu}\|^2 \big)$ and 
\begin{align*}
\Lambda_1 &= C \big( \|\nabla\xi\|\|\xi\|_{H^2}\|\bm{\tau}\|\|\nabla\bm{\tau}\| + \|\xi\|\|\xi\|_{H^2}\|\nabla\bm{\tau}\|^2+ \|\xi\|\|\xi\|_{V}\|\xi\|_{H^3}^2 + \|\bm{\tau}\|^2\|\nabla\bm{\tau}\|^2 \big) \\
    &\hspace{.5cm}+ C(\|\Bar{\varphi}\|_{H^2}^2 + \|\Bar{\mu}\|_{H^2}\|\nabla\Bar{\mu}\| + \|\varphi_h\|_{H^2}^2)\|\xi\|^2\|\xi\|_{V}^2 + C(1+ \|\nabla\Bar{\textbf{u}}\|\|\Bar{\textbf{u}}\|_{\textbf{H}^2})\|\xi\|_{V}^{3}\|\xi\| 
\end{align*}
It is worth noting that using the estimates, \eqref{cd1}, \eqref{cd2}, and the regularity of linearized variables $(\psi, \textbf{w})$, it follows that $\int\limits_{0}^{T}\Lambda \leq C$ and $\int\limits_{0}^{T}\Lambda_1 \leq C\|h\|_{L^2(0,T;H)}^4$. By exploiting Gronwall's lemma and an elliptic estimate for $\rho$, we deduce,
\begin{align}\label{eq3.43}
    \sup_{t\in[0,T]}(\|\rho\|^2+ \|\textbf{v}\|^2) + \|\rho\|_{L^2(0,T;H^2)}^2 + \|\textbf{v}\|_{L^2(0,T;\mathbb{V}_{div})}^2 &\leq C\|h\|_{L^2(0,T;H)}^4 \leq C\|h\|_{\mathcal{U}}^4
\end{align}
Furthermore, for the regularity of velocity in time, test $\textbf{v}'$ with an arbitrary test function $\bm{\nu} \in V$, we obtain, 
\begin{align*}
    |\langle\textbf{v}', \bm{\nu}\rangle| &\leq |((\bm{\tau}\cdot\nabla)\bm{\tau}, \bm{\nu})|+ |((\Bar{\textbf{u}}\cdot\nabla)\textbf{v},\bm{\nu})|+ |((\textbf{v}\cdot\nabla)\Bar{\textbf{u}}, \bm{\nu})| + 2|(\eta(\Bar{\varphi})D\textbf{v}, D\bm{\nu})|+ 2|(\eta'(\Bar{\varphi})\xi D\bm{\tau}, D\bm{\nu})|  \\ &\hspace{.5cm}+ 2|((\eta(\varphi_h)-\eta(\Bar{\varphi})-\eta'(\Bar{\varphi})\psi)D\Bar{\textbf{u}}, D\bm{\nu})|
    + |((\Delta\Bar{\varphi})\nabla\rho, \bm{\nu})| + |((\Delta\xi)\nabla\xi, \bm{\nu})| + |((\Delta\rho)\nabla\Bar{\varphi}, \bm{\nu})|
    \\ &\leq \|\bm{\tau}\|_{L^4}^2\|\nabla\bm{\nu}\| + \|\Bar{\textbf{u}}\|_{L^4}\|\textbf{v}\|_{L^4}\|\nabla\bm{\nu}\| + \|\textbf{v}\|_{L^4}\|\Bar{\textbf{u}}\|_{L^4}\|\nabla\bm{\nu}\| + 2\eta_1\|\nabla\textbf{v}\|\|\nabla\bm{\nu}\| + 2\|\eta'(\Bar{\varphi})\|_{L^\infty}\|\xi\|_{L^\infty}\|\nabla\bm{\tau}\|\|\nabla\bm{\nu}\|  \\ &\hspace{.5cm}+ \|\eta''(\Hat{\varphi})\|_{L^\infty}\|\xi\|_{L^8}^2 \|\nabla\Bar{\textbf{u}}\|_{L^4}\|\nabla\bm{\nu}\| +  \|\eta'(\Bar{\varphi})\|_{L^\infty}\|\rho\|_{L^4}\|\nabla\Bar{\textbf{u}}\|_{L^4}\|\nabla\bm{\nu}\|
    + \|\Delta\Bar{\varphi}\|_{L^4}\|\nabla\rho\|\|\bm{\nu}\|_{L^4}  \\ &\hspace{.5cm}+ \|\Delta\rho\|\|\nabla\Bar{\varphi}\|_{L^4}\|\bm{\nu}\|_{L^4}
    \\ &\leq C\Big[\|\nabla\bm{\tau}\|\|\bm{\tau}\|\|\nabla\bm{\nu}\| + \|\nabla\textbf{v}\|\|\nabla\Bar{\textbf{u}}\|\|\nabla\bm{\nu}\| + \|\nabla\textbf{v}\|\|\nabla\bm{\nu}\| + \|\xi\|_{H^2}^{1/2}\|\xi\|^{1/2}\|\nabla\bm{\tau}\|\|\nabla\bm{\nu}\|  \\ &\hspace{.5cm}+ \|\xi\|_{V}^{3/2}\|\xi\|^{1/2} \|\Bar{\textbf{u}}\|_{H^2}^{1/2}\|\nabla\Bar{\textbf{u}}\|^{1/2}\|\nabla\bm{\nu}\| +  \|\rho\|_{V}^{1/2}\|\rho\|^{1/2}\|\Bar{\textbf{u}}\|_{H^2}^{1/2}\|\nabla\Bar{\textbf{u}}\|^{1/2}\|\nabla\bm{\nu}\|
    + \|\Bar{\varphi}\|_{H^3}^{3/4}\|\Bar{\varphi}\|_{L^\infty}^{1/4}\|\nabla\rho\|\|\nabla\bm{\nu}\|  \\ &\hspace{.5cm}+ \|\Delta\rho\|\|\Bar{\varphi}\|_{H^2}^{1/2}\|\nabla\Bar{\varphi}\|^{1/2}\|\nabla\bm{\nu}\|\Big]
\\ &\leq C\Big[\|\nabla\bm{\tau}\|\|\bm{\tau}\| + \|\nabla\Bar{\textbf{u}}\|\|\nabla\textbf{v}\|  + \|\nabla\textbf{v}\| + \|\xi\|_{H^2}\|\nabla\bm{\tau}\| + \|\xi\|_{V}^2 + \|\xi\|_{V}\|\xi\|\|\Bar{\textbf{u}}\|_{H^2}\|\nabla\Bar{\textbf{u}}\| \\ &\hspace{.5cm}+  \|\rho\|_{V} +\|\rho\|\|\Bar{\textbf{u}}\|_{H^2}\|\nabla\Bar{\textbf{u}}\| + \|\Bar{\varphi}\|_{H^3}^{3/4}\|\Bar{\varphi}\|_{L^\infty}^{1/4}\|\nabla\rho\|  + \|\Delta\rho\|\|\Bar{\varphi}\|_{H^2}^{1/2}\|\nabla\Bar{\varphi}\|^{1/2}\Big]\|\nabla\bm{\nu}\|
\end{align*}
Combining the regularity for the strong solution, the stability estimates \eqref{cd1}-\eqref{cd3} and \eqref{eq3.43} we can conclude that,
\begin{align*}
    \|\textbf{v}'\|_{L^2(0,T;H)} &\leq C\|h\|_{L^2(0,T;H)}^2.
\end{align*}
Consequently,
\begin{align}\label{eq2.38a}
    \frac{\|(\mathcal{S}(\Bar{U}+ h )- \mathcal{S}(\Bar{U})- (\psi_{h}, \textbf{w}_{h}))\|_{\mathcal{Z}}}{\|h\|_{\mathcal{U}}}  = \frac{\|(\rho, \textbf{v})\|_{\mathcal{Z}}}{\|h\|_{\mathcal{U}}} \leq \|h\|_{\mathcal{U}}\rightarrow 0, \hspace{.5cm}\text{ as }\|h\|_{\mathcal{U}}\rightarrow 0.
\end{align}
Hence the differentiability of the control to state operator follows. Moreover, from \eqref{eq2.38a}, we can conclude that the derivative of the control to state operator at $\Bar{U}$ in the direction of $h$, $\mathcal{S}'(\Bar{U})(h) = (\psi_h, \textbf{w}_h)$.
\end{proof}

\subsection{First order necessary optimality condition}

\begin{theorem} \label{theorem2.5} 
Let $\Bar{U}$ be an optimal control, and $(\Bar{\varphi}, \Bar{\textbf{u}})$ be the associated state. Then, the following inequality holds:  
\begin{align} \label{foc} 
\int\limits_{0}^{T} \sum\limits_{i=1}^{k} (\Bar{\varphi}(x_i,t) - \Phi^i(t)) \psi_{U-\Bar{U}}(x_i,t) + \int\limits_{0}^{T} \int_\Omega (\Bar{\textbf{u}} - \textbf{u}_d)\cdot \textbf{w}_{U-\Bar{U}} &+ \int_\Omega (\Bar{\textbf{u}}(x,T) - \textbf{u}_d(T))\cdot \textbf{w}_{U-\Bar{U}}(T) \nonumber\\ + \int\limits_{0}^{T} \int_\Omega (U - \Bar{U}) \Bar{U} &\geq 0, \hspace{.5cm} \forall U \in \mathcal{U}_{ad}.
\end{align}  
 Here, $(\psi_{U-\Bar{U}}, \textbf{w}_{U-\Bar{U}})$ is the solution of the linearized system \eqref{eq2.3}-\eqref{eq2.8} corresponding to the source term $U - \Bar{U}$.  
\end{theorem}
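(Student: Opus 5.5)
The plan is the standard variational argument: exploit the convexity of $\mathcal{U}_{ad}$ and the differentiability of the reduced cost $\hat{\mathcal{J}}_1(U) := \mathcal{J}_1(\mathcal{S}(U),U)$ at $\Bar{U}$. Fix $U\in\mathcal{U}_{ad}$ and, for $\lambda\in(0,1]$, set $U_\lambda = \Bar{U} + \lambda(U-\Bar{U})$. By convexity of the box constraints $U_1\le U\le U_2$, $U_\lambda\in\mathcal{U}_{ad}$. Optimality of $\Bar{U}$ then gives $\hat{\mathcal{J}}_1(U_\lambda)-\hat{\mathcal{J}}_1(\Bar{U})\ge 0$. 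Dividing by $\lambda$ and letting $\lambda\downarrow 0$ will give $\hat{\mathcal{J}}_1'(\Bar{U})(U-\Bar{U})\ge 0$, once we know the directional derivative exists and has the form on the left of \eqref{foc}.

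To compute the derivative, write $h=U-\Bar{U}\in L^2(0,T;V\cap L^\infty(\Omega))$ and $(\varphi_\lambda,\textbf{u}_\lambda)=\mathcal{S}(U_\lambda)$. By Theorem \ref{theorem3.4},
\[
(\varphi_\lambda-\Bar{\varphi})/\lambda \to \psi_h
\quad\text{and}\quad
(\textbf{u}_\lambda-\Bar{\textbf{u}})/\lambda \to \textbf{w}_h
\quad\text{in } \mathcal{Z},
\]
with remainder $O(\lambda)$ by \eqref{eq2.38a}. Each quadratic term of $\mathcal{J}_1$ is expanded as $a^2-b^2=(a-b)(a+b)$ and handled separately.

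\textbf{Distributed velocity and control terms.} Convergence of $\textbf{w}$-quotients in $L^2(0,T;\mathbb{V}_{div})$ and the strong convergence $\textbf{u}_\lambda\to\Bar{\textbf{u}}$ yield $\int_0^T\!\int_\Omega(\Bar{\textbf{u}}-\textbf{u}_d)\cdot\textbf{w}_h$. The control term is exact: $\frac{1}{2\lambda}(\|U_\lambda\|^2-\|\Bar U\|^2)\to\int_Q h\Bar{U}$.

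\textbf{Terminal velocity term.} Here convergence in $C([0,T];\mathbb{G}_{div})$ is needed. For this I would use that the $\mathcal{Z}$-norm contains $L^\infty(0,T;\mathbb{G}_{div})\cap H^1(0,T;\mathbb{V}_{div}')$, which embeds in $C([0,T];\mathbb{G}_{div})$ as in the Remark after Theorem \ref{theorem2}. This allows evaluation at $t=T$ and gives $\int_\Omega(\Bar{\textbf{u}}(T)-\textbf{u}_d(T))\cdot\textbf{w}_h(T)$.

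\textbf{Pointwise term (main obstacle).} This is the delicate step, since point evaluation is not continuous on $L^2(0,T;H)$. Here I would use that $\mathcal{Z}$ controls $\varphi$ in $L^2(0,T;H^2)$, together with the embedding $H^2(\Omega)\hookrightarrow C(\Bar\Omega)$ in two dimensions. This gives, for each $i$,
\[
\Big\|\tfrac{\varphi_\lambda(x_i,\cdot)-\Bar{\varphi}(x_i,\cdot)}{\lambda}-\psi_h(x_i,\cdot)\Big\|_{L^2(0,T)}\le C\,\Big\|\tfrac{\varphi_\lambda-\Bar\varphi}{\lambda}-\psi_h\Big\|_{L^2(0,T;H^2)}\to 0 .
\]
Likewise $\varphi_\lambda(x_i,\cdot)\to\Bar\varphi(x_i,\cdot)$ in $L^2(0,T)$ by \eqref{cd1}. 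Writing
\[
\frac{1}{2\lambda}\big[(\varphi_\lambda-\Phi^i)^2-(\Bar\varphi-\Phi^i)^2\big]
=\frac{\varphi_\lambda-\Bar\varphi}{\lambda}\cdot\frac{\varphi_\lambda+\Bar\varphi-2\Phi^i}{2},
\]
evaluated at $x_i$, we get a product of two $L^2(0,T)$-convergent factors. Hence its time integral tends to $\int_0^T(\Bar\varphi(x_i,t)-\Phi^i)\psi_h(x_i,t)\,dt$, using that $\Phi^i$ is constant and so in $L^2(0,T)$.

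Summing the four limits gives exactly the left side of \eqref{foc}, which is therefore $\ge 0$.
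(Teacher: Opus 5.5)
Your proposal is correct and follows essentially the same route as the paper. Both use the variational inequality $j'(\Bar{U})(U-\Bar{U})\ge 0$: the paper cites Tr\"oltzsch for it, and you derive it directly from convexity. Both then expand each quadratic term as a difference of squares and pass to the limit, using Theorem~\ref{theorem3.4} with $L^2(0,T;H^2)\hookrightarrow L^2(0,T;C(\Bar{\Omega}))$ for the pointwise term and continuity in time for the terminal velocity term.

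One small correction. $L^\infty(0,T;\mathbb{G}_{div})\cap H^1(0,T;\mathbb{V}_{div}')$ by itself only gives weak continuity into $\mathbb{G}_{div}$. The norm-continuous embedding into $C([0,T];\mathbb{G}_{div})$ comes from $L^2(0,T;\mathbb{V}_{div})\cap H^1(0,T;\mathbb{V}_{div}')$ (Lions--Magenes). That space is also part of the $\mathcal{Z}$-norm, so your terminal-velocity step stands.
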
 

\begin{proof}  
Define $j(U) = \mathcal{J}_1(\mathcal{S}(U), U)$. If $j$ attains a minimum value at $\Bar{U}$ in $\mathcal{U}_{ad}$, then the following optimality condition must hold for all $U \in \mathcal{U}_{ad}$:  
\begin{equation}\label{eq2.42}  
    j'(\Bar{U})(U - \Bar{U}) \geq 0.  
\end{equation}  
For example refer  \cite[Lemma 2.21]{TRO}. We denote $\mathcal{S}(\Bar{U}) = (\Bar{\varphi}, \Bar{\textbf{u}})$ and $\mathcal{S}(\Bar{U} + \beta U) = (\varphi_\beta, \textbf{u}_\beta)$. Observe that, 
\begin{align*}
    j(\Bar{U}+\beta U)- j(\Bar{U}) &= \frac{1}{2}\int\limits_{0}^{T}\sum\limits_{i=1}^{k}(\varphi_\beta(x_i,t)-\Phi^i(t))^2 - (\Bar{\varphi}(x_i,t)-\Phi^i(t))^2 
     +\frac{1}{2}\int\limits_{0}^{T}\int_\Omega|\textbf{u}_\beta-\textbf{u}_d|^2-|\Bar{\textbf{u}}-\textbf{u}_d|^2 \\&+\frac{1}{2}\int_\Omega|\textbf{u}_\beta(x,T)-\textbf{u}_d(T)|^2-|\Bar{\textbf{u}}(T)-\textbf{u}_d(T)|^2 + \frac{1}{2}\int\limits_{0}^{T}\int_\Omega(\Bar{U}+\beta U)^2-(\Bar{U})^2\\
    &= \frac{1}{2}\int\limits_{0}^{T}\sum\limits_{i=1}^{k}(\varphi_\beta(x_i,t)+ \Bar{\varphi}(x_i,t)-2\Phi^i(t))(\varphi_\beta(x_i,t)-\Bar{\varphi}(x_i,t))
     \\&+\frac{1}{2}\int\limits_{0}^{T}\int_\Omega (\textbf{u}_\beta+ \Bar{\textbf{u}}-2\textbf{u}_d)\cdot(\textbf{u}_\beta-\Bar{\textbf{u}})  + \frac{1}{2}\int\limits_{0}^{T}\int_\Omega\beta^2U^2+ 2\beta U\Bar{U}\\& +\frac{1}{2}\int_\Omega (\textbf{u}_\beta(x,T)+ \Bar{\textbf{u}}(x,T)-2\textbf{u}_d(T))\cdot(\textbf{u}_\beta(x,T)-\Bar{\textbf{u}}(x,T)),
\end{align*}
\begin{align*}
  \frac{j(\Bar{U}+\beta U)- j(\Bar{U})}{\beta} 
   &= \frac{1}{2}\int\limits_{0}^{T}\sum\limits_{i=1}^{k} (\varphi_\beta(x_i,t)+ \Bar{\varphi}(x_i,t)-2\Phi^i(t)) \frac{(\varphi_\beta(x_i,t)-\Bar{\varphi}(x_i,t))}{\beta}  \\& + \frac{1}{2}\int\limits_{0}^{T}\int_\Omega(\textbf{u}_\beta+ \Bar{\textbf{u}}-2\textbf{u}_d) \cdot\frac{(\textbf{u}_\beta-\Bar{\textbf{u}})}{\beta} + \frac{1}{2}\int\limits_{0}^{T}\int_\Omega\beta|U|^2+ \int\limits_{0}^{T}\int_\Omega U\Bar{U}\\& + \frac{1}{2}\int_\Omega(\textbf{u}_\beta(x,T)+ \Bar{\textbf{u}}(x,T)-2\textbf{u}_d(T)) \cdot\frac{(\textbf{u}_\beta(x,T)-\Bar{\textbf{u}}(x,T))}{\beta}.
\end{align*}

Using the continuous dependence of solution \eqref{cd1}, \eqref{cd2}and \eqref{cd3} we have, as $\beta \rightarrow 0$, $\varphi_\beta \rightarrow \Bar{\varphi}$ in $L^2(0,T;H^2(\Omega))\hookrightarrow L^2(0,T; C(\Bar{\Omega}))$. Similarly, $\textbf{u}_\beta\rightarrow \Bar{\textbf{u}}$ in $C([0,T];\mathbb{G}_{div})$ as $\beta \rightarrow 0$. Furthermore, by proposition \ref{theorem3.4}, it follows that,
\begin{equation*}
    \frac{\|(\mathcal{S}(\Bar{U}+ \beta U)- \mathcal{S}(\Bar{U})- (\psi_{\beta U}, \textbf{w}_{\beta U}))\|_{\mathcal{Z}}}{\|\beta U\|_{\mathcal{U}}} =  \frac{\|( (\varphi_\beta, \textbf{u}_\beta)- (\Bar{\varphi},\Bar{\textbf{u}})- (\psi_{\beta U}, \textbf{w}_{\beta U}))\|_{\mathcal{Z}}}{\|\beta U\|_{\mathcal{U}}} \leq \|\beta U\|_{\mathcal{U}},
\end{equation*}
where $(\psi_{\beta U}, \textbf{w}_{\beta U})$ solves the linearized system around $\Bar{U}$, corresponding to the source term $\beta U$. Consequently, $( \frac{(\varphi_\beta-\Bar{\varphi})}{\beta}, \frac{(\textbf{u}_\beta-\Bar{\textbf{u}})}{\beta}) 
\rightarrow (\psi_U, \textbf{w}_U ) $ in 
$L^2(0,T;H^2(\Omega)) \times L^\infty(0,T; \mathbb{G}_{div})\cap H^1(0,T; \mathbb{V}_{div}') $ and we use the embedding $ L^2(0,T;H^2(\Omega)) \times L^\infty(0,T; \mathbb{G}_{div})\cap H^1(0,T; \mathbb{V}_{div}')   \hookrightarrow L^2(0,T; C(\Bar{\Omega})) \times C([0,T]; \mathbb{G}_{div})$ to pass to the limit as $\beta\rightarrow 0$. Thus we get,
\begin{align*}
   j'(\Bar{U})(U) &=\int\limits_{0}^{T}\sum_{i=1}^{k}(\Bar{\varphi}(x_i,t)- \Phi^i(t))\psi_{U}(x_i,t)  +\int\limits_{0}^{T}\int_\Omega(\Bar{\textbf{u}}-\textbf{u}_d)\cdot\textbf{w}_{U} + \int_\Omega (\Bar{\textbf{u}}(x,T) - \textbf{u}_d(T))\cdot \textbf{w}_{U}(T)+ \int\limits_{0}^{T}\int_\Omega U\Bar{U}. 
\end{align*}
This expression along with \eqref{eq2.42} gives the result. 
\end{proof}

\subsection{Characterisation of an optimal control}
In this section we will introduce an adjoint system which will be used to characterise the optimal control. First we  discuss the wellposedness of the adjoint system.

\subsubsection{The Adjoint system}
Let $(\bar{\varphi}, \bar{\textbf{u}})$ denote the optimal state associated with an optimal control $\bar{U}$. Consider the adjoint system given by,
\begin{align}
    -\xi'-\Bar{\textbf{u}}\cdot\nabla\xi + 2\eta'(\Bar{\varphi})D\Bar{\textbf{u}}:D\textbf{v}+\Delta(m(\Bar{\varphi})(\Delta\xi)) -m(\Bar{\varphi})F''(\Bar{\varphi})\Delta\xi & \nonumber\\+ \Delta(m'(\Bar{\varphi})\nabla\Bar{\varphi}\cdot\nabla\xi) - m'(\Bar{\varphi})\nabla(\Delta\Bar{\varphi})\cdot\nabla\xi -\nabla\cdot(\textbf{v}\Delta\Bar{\varphi})+ \Delta(\textbf{v}\cdot\nabla\Bar{\varphi}) &= R,\text{ in } Q, \label{eq2.44}\\
    -\textbf{v}'-(\Bar{\textbf{u}}\cdot\nabla)\textbf{v} + (\textbf{v}\cdot\nabla)\Bar{\textbf{u}} + \nabla q -\nabla\cdot(2\eta(\Bar{\varphi})D\textbf{v}) + \xi\nabla\Bar{\varphi} &= (\Bar{\textbf{u}}-\textbf{u}_d), \text{ in } Q,\label{eq2.45}\\
    \nabla\cdot \textbf{v} &= 0, \text{ in } Q,\label{eq2.46}\\
      \textbf{v}&= \textbf{0}, \text{ on } \Sigma,\label{eq2.47}\\
    \frac{\partial\xi}{\partial\textbf{n}} = \frac{\partial\Delta\xi}{\partial\textbf{n}} &= 0, \text{ on } \Sigma,\label{eq2.48}\\
    \xi(x,T)= 0 , \hspace{.5cm} \textbf{v}(x,T) &= (\Bar{\textbf{u}}(T)-\textbf{u}_d(T)), \text{ in } \Omega.\label{eq2.49}
\end{align}
Here, $R := \sum\limits_{i=1}^{k}(\Bar{\varphi}(x_i,t)-\Phi^i(t))$. Observe that $\Phi^i$ and $u_d$ are target functions as defined in the cost $\mathcal{J}_1$ given by \eqref{eqJ1}.
Note that we define the solution of \eqref{eq2.44}-\eqref{eq2.49} using the transposition method due to  low regularity of the source term, $R$.  

\begin{definition}[Solution of the adjoint system by transposition]\label{def2}
We say $(\xi, \textbf{v}) \in L^2(0,T;H) \times L^2(0,T; \mathbb{G}_{div})$ solve the system \eqref{eq2.44}-\eqref{eq2.49} for $R = \sum\limits_{i=1}^{k}(\Bar{\varphi}(x_i,t)-\Phi^i(t))$, if it satisfies,
\begin{align}\label{eq2.50}
    \int\limits_{0}^{T}\int_\Omega \xi U = \int\limits_{0}^{T}\sum\limits_{i=1}^{k}(\Bar{\varphi}(x_i,t)-\Phi^i(t))\psi(x_i, t) dt + \int\limits_{0}^{T}\int_\Omega (\Bar{\textbf{u}}-\textbf{u}_d)\cdot\textbf{w}dxdt + \int_\Omega (\Bar{\textbf{u}}(x,T)-\textbf{u}_d(x,T))\cdot\textbf{w}(x,T)dx,
\end{align}
for every $U\in L^2(0,T;H)$, where $(\psi, \textbf{w})$ is the solution of the system \eqref{eq2.3}-\eqref{eq2.8} corresponding to the source term $U$ and initial data $(\psi_0, \textbf{w}_0) = (0,\textbf{0})$.
\end{definition}

The main idea of the proof is to approximate the term $R = \sum\limits_{i=1}^{k}(\Bar{\varphi}(x_i,t)-\Phi^i(t))$ by functions $R_\epsilon$ in $L^2(0, T; H)$ and passing to the limit to obtain a solution of the system.  First we state a useful lemma about the existence result for a regular source term.

\begin{lemma}\label{lemma2.6}
     Consider the system \eqref{eq2.44}-\eqref{eq2.49} with the forcing term in \eqref{eq2.44} given by $\tilde R$, where $ \tilde R \in L^2(0,T;H)$, and terminal data, $\xi (x, T) = \tilde{\xi_{T}} \in H$. Assume that $\Bar{\textbf{u}}-\textbf{u}_d \in L^2(0,T; \mathbb{G}_{div})$,  and $\Bar{\textbf{u}}(T)-\textbf{u}_d(T) \in \mathbb{G}_{div}$. In addition, assumptions, \emph{\textbf{[A1]}}-\emph{\textbf{[A5]}} are satisfied. Then there exist a weak solution, $(\xi, \textbf{v})$ to the system \eqref{eq2.44}-\eqref{eq2.49} which satisfies,
    \begin{align*}
        \xi &\in L^2(0,T;V) \cap L^\infty(0,T;H) \cap H^1(0,T; V'),\\
        \textbf{v} &\in L^2(0,T;\mathbb{V}_{div}) \cap L^\infty(0,T;\mathbb{G}_{div}) \cap H^1(0,T; \mathbb{V}_{div}'),
    \end{align*}
    and the following weak formulation.
\begin{align}
    -(\xi',\chi) -(\Bar{\textbf{u}}\cdot\nabla\xi,\chi) + 2(\eta'(\Bar{\varphi})D\Bar{\textbf{u}}:D\textbf{v}, \chi)+ (\Delta(m(\Bar{\varphi})(\Delta\xi)),\chi) -(m(\Bar{\varphi})F''(\Bar{\varphi})\Delta\xi&,\chi) \nonumber\\+ (\Delta(m'(\Bar{\varphi})\nabla\Bar{\varphi}\cdot\nabla\xi),\chi) - (m'(\Bar{\varphi})\nabla(\Delta\Bar{\varphi})\cdot\nabla\xi,\chi) -(\nabla\cdot(\textbf{v}\Delta\Bar{\varphi}),\chi)+ (\Delta(\textbf{v}\cdot\nabla\Bar{\varphi}),\chi) &= (\tilde R, \chi),\label{eq2.51}\\
    -(\textbf{v}', \bm{\nu})-((\Bar{\textbf{u}}\cdot\nabla)\textbf{v},\bm{\nu}) + ((\textbf{v}\cdot\nabla)\Bar{\textbf{u}},\bm{\nu}) + 2(\eta(\Bar{\varphi})D\textbf{v},D\bm{\nu}) + (\xi\nabla\Bar{\varphi},\bm{\nu}) &= (\Bar{\textbf{u}}-\textbf{u}_d, \bm{\nu}),\label{eq2.52}
\end{align}
for all $\chi\in V$ and $\bm{\nu}\in \mathbb{V}_{div}$. Furthermore,
\begin{align}
    \xi(T) = \tilde{\xi_{T}}, \hspace{.5cm} \textbf{v}(T)= \Bar{\textbf{u}}(T)-\textbf{u}_d(T), \hspace{.5cm} \text{ a.e in } \Omega.
\end{align}
\end{lemma}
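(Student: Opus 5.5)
We prove Lemma~\ref{lemma2.6} by a Galerkin scheme run backward in time, mirroring Theorem~\ref{theorem2}. We reverse time with $s = T-t$, so that \eqref{eq2.44}--\eqref{eq2.49} becomes a forward linear problem with initial data $(\tilde{\xi_T}, \Bar{\textbf{u}}(T)-\textbf{u}_d(T))$. We use the same bases as before: Neumann--Laplacian eigenfunctions $\{\zeta_i\}$ for $\xi$ and Stokes eigenfunctions $\{\bm{\nu}_i\}$ for $\textbf{v}$. The projected system is a linear ODE system whose coefficients are integrable in time, because $(\Bar{\varphi},\Bar{\textbf{u}})$ has the strong regularity of Proposition~\ref{prop2}. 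Carath\'eodory theory therefore gives a local solution $(\xi_n,\textbf{v}_n)$, which we extend to $[0,T]$ using a priori bounds independent of $n$.

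\textbf{A priori bound.} We test the $\xi$-equation with $\xi_n$ and the $\textbf{v}$-equation with $\textbf{v}_n$, and integrate by parts twice in the fourth-order terms. The term $(\Delta(m(\Bar{\varphi})\Delta\xi_n),\xi_n)=(m(\Bar{\varphi})\Delta\xi_n,\Delta\xi_n)\ge m_0\|\Delta\xi_n\|^2$ supplies dissipation, and the transport term vanishes since $\nabla\cdot\Bar{\textbf{u}}=0$. The remaining terms are handled with H\"older, Ladyzhenskaya, Agmon, Gagliardo--Nirenberg and Young, as in \eqref{eq2.15}--\eqref{eq2.16}:
\begin{itemize}
\item $(m(\Bar{\varphi})F''(\Bar{\varphi})\Delta\xi_n,\xi_n)$ is bounded by $\epsilon\|\Delta\xi_n\|^2+C\|\xi_n\|^2$, using $\Bar{\varphi}\in L^\infty(Q)$.
\item $(\Delta(m'(\Bar{\varphi})\nabla\Bar{\varphi}\cdot\nabla\xi_n),\xi_n)=(m'\nabla\Bar{\varphi}\cdot\nabla\xi_n,\Delta\xi_n)$, which we bound via $\|\nabla\Bar{\varphi}\|_{L^4}\|\nabla\xi_n\|_{L^4}\|\Delta\xi_n\|$.
\item $(m'\nabla\Delta\Bar{\varphi}\cdot\nabla\xi_n,\xi_n)$ uses $\|\Bar{\varphi}\|_{H^3}\in L^2(0,T)$.
\item $2(\eta'D\Bar{\textbf{u}}:D\textbf{v}_n,\xi_n)$ is bounded by $\|\nabla\Bar{\textbf{u}}\|_{L^4}\|\nabla\textbf{v}_n\|\|\xi_n\|_{L^4}$, which is where $\Bar{\textbf{u}}\in L^2(0,T;\textbf{H}^2)$ enters.
\item $(\textbf{v}_n\Delta\Bar{\varphi},\nabla\xi_n)$ and $(\textbf{v}_n\cdot\nabla\Bar{\varphi},\Delta\xi_n)$ are bounded by $\epsilon(\|\nabla \textbf{v}_n\|^2+\|\Delta\xi_n\|^2)+C\Lambda(t)\|\textbf{v}_n\|^2$.
\item In the velocity equation, $(\xi_n\nabla\Bar{\varphi},\textbf{v}_n)$ and $((\textbf{v}_n\cdot\nabla)\Bar{\textbf{u}},\textbf{v}_n)$ are standard.
\end{itemize}
Summing and choosing $\epsilon$ small, with $\Lambda\in L^1(0,T)$ by Proposition~\ref{prop2}, we reach
\[
\frac{d}{ds}(\|\xi_n\|^2+\|\textbf{v}_n\|^2)+m_0\|\Delta\xi_n\|^2+\eta_0\|\nabla\textbf{v}_n\|^2\le C\Lambda(\|\xi_n\|^2+\|\textbf{v}_n\|^2)+C(\|\tilde R\|^2+\|\Bar{\textbf{u}}-\textbf{u}_d\|^2).
\]
Gronwall's lemma then gives bounds in $L^\infty(0,T;H)\cap L^2(0,T;H^2)$ for $\xi_n$ and in $L^\infty(0,T;\mathbb{G}_{div})\cap L^2(0,T;\mathbb{V}_{div})$ for $\textbf{v}_n$. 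These exceed the claimed $L^2(0,T;V)$ regularity of $\xi$.

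\textbf{Time derivatives.} Using the equations, we bound $\langle\xi_n',\chi\rangle$ and $\langle\textbf{v}_n',\bm{\nu}\rangle$ in the dual norms. With only the $L^2(H^2)$ bound in hand, the fourth-order term $(m\Delta\xi_n,\Delta\chi)$ controls $\xi_n'$ naturally in $L^2(0,T;(H^2_N)')$ rather than in $V'$. To obtain $H^1(0,T;V')$ as stated, we first derive a second estimate by testing with $-\Delta\xi_n$, which yields $\xi_n\in L^\infty(0,T;V)\cap L^2(0,T;H^3)$ under the same $L^2(0,T;H)$ source. We then write the fourth-order term as $(\nabla(m\Delta\xi_n),\nabla\chi)$, which is bounded for $\chi\in V$. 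The velocity derivative is bounded as in \eqref{eq3.25}.

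\textbf{Passage to the limit and main obstacle.} Weak and weak-$*$ compactness let us pass to the limit in the linear Galerkin equations. The terminal conditions follow from $\xi\in C([0,T];H)$ and $\textbf{v}\in C([0,T];\mathbb{G}_{div})$ (Lions--Magenes), and uniqueness follows from linearity and the same energy estimate. The main obstacle is the cross-coupling in the higher estimate. After testing with $-\Delta\xi_n$, the term $(\Delta(\textbf{v}_n\cdot\nabla\Bar{\varphi}),-\Delta\xi_n)$ needs $\textbf{v}_n\cdot\nabla\Bar{\varphi}$ in $H^1$ paired with $\nabla\Delta\xi_n$, and $\eta'D\Bar{\textbf{u}}:D\textbf{v}_n$ must be paired with $\Delta\xi_n$. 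Both require only $\|\nabla\textbf{v}_n\|$ times $L^\infty$- or $L^4$-norms of $\nabla\Bar{\varphi}$, $D^2\Bar{\varphi}$ and $\nabla\Bar{\textbf{u}}$. The point I would check most carefully is that these are absorbable using $\eta_0\|\nabla\textbf{v}_n\|^2$ from the first estimate together with $\Bar{\varphi}\in L^\infty(0,T;H^3)$. If this fails, I would settle for $\xi'\in L^2(0,T;(H^2_N)')$ and interpret the weak formulation \eqref{eq2.51} with $\chi\in H^2_N$.
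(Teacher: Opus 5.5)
Your backward-in-time Galerkin scheme and the basic energy estimate (testing with $\xi_n$ and $\textbf{v}_n$, with the $\Delta(\cdot)$ terms read through their bilinear forms, as you implicitly do) are sound. This is the route the paper intends, although its proof is only the sentence that Galerkin approximation works. That estimate gives $\xi\in L^\infty(0,T;H)\cap L^2(0,T;H^2_N)$, $\xi'\in L^2(0,T;(H^2_N)')$, and the stated regularity for $\textbf{v}$.

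The step that fails is the upgrade to $\xi\in H^1(0,T;V')$. Testing with $-\Delta\xi_n$ gives a Gronwall inequality whose starting value is $\|\nabla P_n\tilde{\xi_{T}}\|$. This is not bounded uniformly in $n$ when $\tilde{\xi_{T}}\in H$ only, and no better estimate can repair it. Take $\Bar{\varphi}\equiv c$, $\Bar{\textbf{u}}\equiv\textbf{0}$, $\tilde R=0$; this is the state generated by $\varphi_0\equiv c$, $\textbf{u}_0=\textbf{0}$, $\Bar{U}=0$. The $\xi$-equation then decouples to $\partial_s\xi+m(c)\Delta^2\xi-m(c)F''(c)\Delta\xi=0$ with $s=T-t$. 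Expanding in Neumann eigenfunctions $-\Delta\zeta_k=\lambda_k\zeta_k$ gives, up to finitely many modes, $\int_0^T\|\xi'\|_{V'}^2\,dt\simeq\sum_k\lambda_k|(\tilde{\xi_{T}},\zeta_k)|^2$. This is finite if and only if $\tilde{\xi_{T}}\in V$. So $\xi\in H^1(0,T;V')$, and likewise $\xi\in L^2(0,T;H^3)$, is false for general $\tilde{\xi_{T}}\in H$. This is not merely academic: Section 4 applies the lemma to $\xi_{T\epsilon}$, which is built from characteristic functions of balls and is not in $V$.

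Even when $\tilde{\xi_{T}}\in V$ (e.g.\ $\tilde{\xi_{T}}=0$ in Theorem \ref{theorem3.6}), the cross term you flagged does not close, and the culprit is $\nabla\Bar{\textbf{u}}$, not $\Bar{\varphi}$. The term with $\textbf{v}_n\cdot\nabla\Bar{\varphi}$ is harmless, since $\|\nabla(\textbf{v}_n\cdot\nabla\Bar{\varphi})\|\le C\|\Bar{\varphi}\|_{H^3}\|\nabla\textbf{v}_n\|$.

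The problem is $2(\eta'(\Bar{\varphi})D\Bar{\textbf{u}}:D\textbf{v}_n,\Delta\xi_n)$, where $\nabla\textbf{v}_n$ is controlled only in $L^2(Q)$. Take $\Bar{\textbf{u}}\in L^\infty(0,T;\mathbb{V}_{div})\cap L^2(0,T;\textbf{H}^2)$ and $\xi_n\in L^\infty(0,T;V)\cap L^2(0,T;H^3)$. Then the H\"older--Gagliardo--Nirenberg count is exactly critical. Any splitting $\|\nabla\Bar{\textbf{u}}\|_{L^p}\|\nabla\textbf{v}_n\|\|\Delta\xi_n\|_{L^q}$ with $p>2$ loses time integrability after absorbing $\|\xi_n\|_{H^3}^2$. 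The endpoint $p=2$ would need $\|\Delta\xi_n\|_{L^\infty}\lesssim\|\xi_n\|_{H^3}$, i.e.\ $H^1\hookrightarrow L^\infty$, which fails in two dimensions. Closing it needs extra input, for instance a Stokes-operator estimate for $\textbf{v}$, which requires $\Bar{\textbf{u}}(T)-\textbf{u}_d(T)\in\mathbb{V}_{div}$.

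Under the lemma's hypotheses, what your argument proves is your fallback: $\xi'\in L^2(0,T;(H^2_N)')$, with \eqref{eq2.51} tested against $\chi\in H^2_N$. That corrected statement is all the transposition argument of Theorem \ref{theorem3.6} uses. The test function $\psi$ there lies in $L^2(0,T;H^2_N)$ with $\psi'\in L^2(0,T;V')$, so the time integration by parts is justified.
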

The proof can be done using the Galerkin approximation method. Since the system is linear, the uniqueness of the solution follows by similar arguments as in the proof of theorem \ref{theorem2}.

\begin{theorem}[\textit{Existence of solution to the adjoint system}]\label{theorem3.6}
Let assumptions \emph{\textbf{[A3]}}-\emph{\textbf{[A5]}} are satisfied. Then there exists a solution to the system \eqref{eq2.44}-\eqref{eq2.49} in the sense of definition \ref{def2}, where the source term,  $R = \sum\limits_{i=1}^{k}(\Bar{\varphi}(x_i,t)-\Phi^i(t))$.
\end{theorem}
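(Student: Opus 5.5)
We will construct the solution by regularising the Dirac source and passing to the limit in the transposition identity, exploiting the duality with the linearised system. The source $R$ is really the measure $\sum_{i=1}^k(\Bar{\varphi}(x_i,t)-\Phi^i(t))\delta_{x_i}$, and $g_i(t):=\Bar{\varphi}(x_i,t)-\Phi^i(t)$ lies in $L^2(0,T)$ because $\Bar{\varphi}\in C([0,T];C^{0,\gamma}(\Bar{\Omega}))$ by Proposition \ref{prop2} and $\Phi^i$ is constant. We take mollifiers $\delta^\epsilon_{x_i}\in C_c^\infty(\Omega)$ supported in $B(x_i,\epsilon)$, nonnegative with unit mass, and set $R_\epsilon=\sum_i g_i(t)\delta^\epsilon_{x_i}\in L^2(0,T;H)$. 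Lemma \ref{lemma2.6} with $\tilde R=R_\epsilon$ and $\tilde\xi_T=0$ then yields a weak solution $(\xi_\epsilon,\textbf{v}_\epsilon)$.

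The next step is a Green-type identity for the regular problem. For any $U\in L^2(0,T;H)$, let $(\psi,\textbf{w})$ be the solution of \eqref{eq2.3}--\eqref{eq2.8} with zero initial data given by Theorem \ref{theorem2}; the $h$-independent a priori bound \eqref{eq2.22} extends that theorem to $h\in L^2(0,T;H)$. We test \eqref{eq2.51} with $\psi$ and \eqref{eq2.52} with $\textbf{w}$, test the linearised equations with $\xi_\epsilon,\textbf{v}_\epsilon$, integrate over $(0,T)$ and integrate by parts. Because the adjoint operator is by construction the formal transpose of the linearised one, all coupling terms cancel. Using $\psi(0)=0$, $\textbf{w}(0)=0$, $\xi_\epsilon(T)=0$ and $\textbf{v}_\epsilon(T)=\Bar{\textbf{u}}(T)-\textbf{u}_d(T)$, we obtain
\[
\int_0^T\!\!\int_\Omega \xi_\epsilon U=\int_0^T\!\!\int_\Omega R_\epsilon\psi+\int_0^T\!\!\int_\Omega(\Bar{\textbf{u}}-\textbf{u}_d)\cdot\textbf{w}+\int_\Omega(\Bar{\textbf{u}}(T)-\textbf{u}_d(T))\cdot\textbf{w}(T).
\]
Justifying this identity requires enough regularity for the integrations by parts, in particular $\psi\in L^2(0,T;H^3)\cap H^1(0,T;V')$ and $\xi_\epsilon\in L^2(0,T;V)\cap H^1(0,T;V')$. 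We will do it at the Galerkin level, or with the fourth-order terms placed in duality pairings, and then pass to the limit.

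The main obstacle is the uniform bound in $\epsilon$, which we cannot obtain from energy estimates because $\|R_\epsilon\|_{L^2(H)}$ blows up. We obtain it from the identity instead. The solution map $U\mapsto(\psi,\textbf{w})$ is bounded from $L^2(0,T;H)$ into $L^2(0,T;H^2)\times C([0,T];\mathbb{G}_{div})$ by \eqref{eq2.18}, \eqref{eq2.22} and the Remark after Theorem \ref{theorem2}. Since $H^2\hookrightarrow C^{0,\gamma}(\Bar{\Omega})$ in two dimensions, we have
\[
\Big|\int_\Omega\delta^\epsilon_{x_i}\psi\,dx\Big|\le\|\psi\|_{L^\infty}\le C\|\psi\|_{H^2}.
\]
Hence the right-hand side of the identity is bounded by $C\big(\|g\|_{L^2(0,T)}+\|\Bar{\textbf{u}}-\textbf{u}_d\|_{L^2(\mathbb{G}_{div})}+\|\Bar{\textbf{u}}(T)-\textbf{u}_d(T)\|\big)\|U\|_{L^2(0,T;H)}$. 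Taking the supremum over $U$ gives $\|\xi_\epsilon\|_{L^2(0,T;H)}\le C$ uniformly in $\epsilon$.

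For $\textbf{v}_\epsilon$, we use the linearity of the adjoint system. We split $\textbf{v}_\epsilon$ into the part driven by $(\Bar{\textbf{u}}-\textbf{u}_d,\Bar{\textbf{u}}(T)-\textbf{u}_d(T))$, which is independent of $\epsilon$, and the part driven by $R_\epsilon$. We estimate the latter from \eqref{eq2.52} viewed as a backward Stokes-type equation with forcing $-\xi_\epsilon\nabla\Bar{\varphi}$. Since $\nabla\Bar{\varphi}\in L^\infty(0,T;L^\infty)$, because $\Bar{\varphi}\in L^\infty(H^3)$, this forcing is bounded in $L^2(0,T;\textbf{H})$, and a standard energy estimate then bounds $\textbf{v}_\epsilon$ in $L^2(0,T;\mathbb{G}_{div})$. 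If this decoupled argument turns out to be messy, we will instead use a duality with the linearised system forced in the velocity equation, which is the same argument as for $\xi_\epsilon$.

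We then extract weakly convergent subsequences $\xi_\epsilon\rightharpoonup\xi$ in $L^2(0,T;H)$ and $\textbf{v}_\epsilon\rightharpoonup\textbf{v}$ in $L^2(0,T;\mathbb{G}_{div})$. In the identity, for fixed $U$ the function $\psi$ lies in $L^2(0,T;C(\Bar{\Omega}))$, so $\int_\Omega\delta^\epsilon_{x_i}\psi(t)\to\psi(x_i,t)$ for a.e. $t$, with the dominating function $\|\psi(t)\|_{L^\infty}|g_i(t)|\in L^1(0,T)$. Dominated convergence therefore gives $\int_0^T\!\!\int_\Omega R_\epsilon\psi\to\int_0^T\sum_i g_i(t)\psi(x_i,t)\,dt$, and the limit $(\xi,\textbf{v})$ satisfies \eqref{eq2.50}, which is Definition \ref{def2}. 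Uniqueness of $\xi$ follows because \eqref{eq2.50} determines $\int\!\!\int\xi U$ for every $U\in L^2(0,T;H)$.
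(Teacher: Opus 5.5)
Your proposal is essentially the paper's proof. Both regularise the point sources, solve the regular adjoint problem via Lemma~\ref{lemma2.6}, derive the duality identity \eqref{eq2.54} against the linearised system with zero initial data, bound $\xi_\epsilon$ in $L^2(0,T;H)$ by duality and $\textbf{v}_\epsilon$ by a backward energy estimate, and pass to the limit. You are a little more careful than the paper in three places: you test with every $U\in L^2(0,T;H)$ (as Definition~\ref{def2} requires), you use dominated convergence for the averaged point evaluations, and you claim uniqueness only for $\xi$, correctly, since $\textbf{v}$ does not enter \eqref{eq2.50}.

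One caveat, which you share with the paper: the cancellation you invoke holds only if the term $(\textbf{v}\cdot\nabla)\Bar{\textbf{u}}$ in \eqref{eq2.45} is read as the transpose term $\sum_j v_j\nabla\Bar{u}_j$. As printed it does not cancel, because $((\textbf{w}\cdot\nabla)\Bar{\textbf{u}},\textbf{v})$ and $((\textbf{v}\cdot\nabla)\Bar{\textbf{u}},\textbf{w})$ differ in general.
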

\begin{proof}
    The proof is based on an approximation technique. Let $R_\epsilon := \sum\limits_{i=1}^{k}\frac{1}{|B(x_i,\epsilon)|}\chi_{B(x_i,\epsilon)}(\Bar{\varphi}(x,t)-\Phi^i(t))$. we choose an $\epsilon >0$ sufficiently small so that the balls $\{ B(x_i, \epsilon)\}_i, 1\leq i\leq k$ do not overlap. Observe that $R_\epsilon\in L^2(0,T;H)$, therefore, by Lemma \ref{lemma2.6}, there exists a weak solution to the adjoint system corresponding to $R_\epsilon$, say $(\xi_\epsilon, \textbf{v}_\epsilon)$.

    Let $U\in \mathcal{U}$ and $(\psi, \textbf{w})$ be the solution of the linearized system \eqref{eq2.3}-\eqref{eq2.8} corresponding to the source term $U-\Bar{U}$ and the initial data, $(\psi_0, \textbf{w}_0) = (0, \textbf{0})$. For a.e. $t \in [0,T]$,  $\psi(t) \in V$ and $\textbf{w}(t) \in \mathbb{V}_{div} $. Using \eqref{eq2.51}-\eqref{eq2.52}, for $\chi = \psi$, $\bm{\nu} = \textbf{w}$ and adding both the equations we get,
\begin{align*}
    -(\xi_{\epsilon}',\psi) -(\Bar{\textbf{u}}\cdot\nabla\xi_{\epsilon},\psi) + 2(\eta'(\Bar{\varphi})D\Bar{\textbf{u}}:D\textbf{v}_{\epsilon}, \psi)+ (\Delta(m(\Bar{\varphi})\Delta\xi_{\epsilon}),\psi) -(m(\Bar{\varphi})F''(\Bar{\varphi})\Delta\xi_{\epsilon},\psi)& \nonumber\\+ (\Delta(m'(\Bar{\varphi})\nabla\Bar{\varphi}\cdot\nabla\xi_{\epsilon}),\psi) - (m'(\Bar{\varphi})\nabla(\Delta\Bar{\varphi})\cdot\nabla\xi_{\epsilon},\psi) -(\nabla\cdot(\textbf{v}_{\epsilon}\Delta\Bar{\varphi}),\psi) + (\Delta(\textbf{v}_{\epsilon}\cdot\nabla\Bar{\varphi}),\psi)& \\
    -(\textbf{v}_{\epsilon}', \textbf{w})-((\Bar{\textbf{u}}\cdot\nabla)\textbf{v}_{\epsilon},\textbf{w}) + ((\textbf{v}_{\epsilon}\cdot\nabla)\Bar{\textbf{u}},\textbf{w}) + 2(\eta(\Bar{\varphi})D\textbf{v}_{\epsilon},D\textbf{w}) + (\xi_{\epsilon}\nabla\Bar{\varphi},\textbf{w}) &= (R_\epsilon, \psi)+ (\Bar{\textbf{u}}-\textbf{u}_d, \textbf{w})
\end{align*}
Applying integration by parts and rearranging, we deduce,
\begin{align*}
    (\xi_{\epsilon}&,\psi') + (\xi_{\epsilon},\Bar{\textbf{u}}\cdot\nabla\psi) + (\textbf{v}_{\epsilon}, \nabla\cdot(2\eta'(\Bar{\varphi})\psi D\Bar{\textbf{u}}))+ (\xi_{\epsilon},\Delta(m(\Bar{\varphi})\Delta\psi)) -(\xi_{\epsilon},\Delta(m(\Bar{\varphi})F''(\Bar{\varphi})\psi)) \nonumber\\&- (\xi_{\epsilon},\nabla\cdot(m'(\Bar{\varphi})\nabla\Bar{\varphi}\Delta\psi)) + (\xi_{\epsilon},\nabla\cdot(m'(\Bar{\varphi})\nabla(\Delta\Bar{\varphi})\psi)) +(\textbf{v}_{\epsilon},(\Delta\Bar{\varphi})\nabla\psi) + (\textbf{v}_{\epsilon},(\Delta\psi)\nabla\Bar{\varphi}) 
    +(\textbf{v}_{\epsilon}, \textbf{w}')\\ &- \int_\Omega (\Bar{\textbf{u}}(T)-\textbf{u}_d(T))\textbf{w}(T)dx+ (\textbf{v}_{\epsilon}, (\Bar{\textbf{u}}\cdot\nabla)\textbf{w}) + (\textbf{v}_{\epsilon},(\textbf{w}\cdot\nabla)\Bar{\textbf{u}}) - (\textbf{v}_{\epsilon},\nabla\cdot(2\eta(\Bar{\varphi})D\textbf{w})) + (\xi_{\epsilon},\textbf{w}\cdot\nabla\Bar{\varphi}) \\&= (R_\epsilon, \psi)+ (\Bar{\textbf{u}}-\textbf{u}_d, \textbf{w})
\end{align*}
After rearranging and applying Equations \eqref{eq2.3} and \eqref{eq2.4}, we obtain,
\begin{align*}
   &\Big(\xi_{\epsilon},\psi' + \Bar{\textbf{u}}\cdot\nabla\psi + \textbf{w}\cdot\nabla\Bar{\varphi}+ \Delta\big(m(\Bar{\varphi})(\Delta\psi - F''(\Bar{\varphi})\psi)\big) - \nabla\cdot(m'(\Bar{\varphi})\nabla\Bar{\varphi}\Delta\psi) + \nabla\cdot(m'(\Bar{\varphi})\nabla(\Delta\Bar{\varphi})\psi) \Big) \\
     & \hspace{.5cm}+ \big(\textbf{v}_{\epsilon}, \textbf{w}'+ (\Bar{\textbf{u}}\cdot\nabla)\textbf{w}+ (\textbf{w}\cdot\nabla)\Bar{\textbf{u}} - \nabla\cdot(2\eta(\Bar{\varphi})D\textbf{w}) - \nabla\cdot(2\eta'(\Bar{\varphi})\psi D\Bar{\textbf{u}}) + (\Delta\Bar{\varphi})\nabla\psi + (\Delta\psi)\nabla\Bar{\varphi} \big)  \\ 
     &= \big(\xi_{\epsilon}, U-\bar{U} \big) \\
     &=(R_\epsilon, \psi)+ (\Bar{\textbf{u}}-\textbf{u}_d,  \textbf{w}) + \int_\Omega (\Bar{\textbf{u}}(T)-\textbf{u}_d(T))\textbf{w}(T)dx
\end{align*}
It follows that,
\begin{align}\label{eq2.54}
     (\xi_\epsilon, U-\bar{U})= (R_\epsilon, \psi)+ (\Bar{\textbf{u}}-\textbf{u}_d,  \textbf{w}) + \int_\Omega (\Bar{\textbf{u}}(T)-\textbf{u}_d(T))\textbf{w}(T)dx
\end{align}
Besides,
\begin{align*}
     \Big|\int\limits_{0}^{T}(\xi_\epsilon, U-\bar{U}) dt \Big| &\leq  \int\limits_{0}^{T}|(R_\epsilon ,\psi)|dt + \int\limits_{0}^{T}|(\Bar{\textbf{u}}-\textbf{u}_d, \textbf{w})| dt  + |(\Bar{\textbf{u}}(T)-\textbf{u}_d(T), \textbf{w}(T))|
     \\&\leq \sum\limits_{i=1}^{k}\frac{1}{|B(x_i,\epsilon)|}\int\limits_{0}^{T}\int_\Omega|\chi_{B(x_i,\epsilon)}(\Bar{\varphi}(x,t)-\Phi^i(t))\psi|dxdt + \int\limits_{0}^{T}\|\Bar{\textbf{u}}-\textbf{u}_d\|\|\textbf{w}\| \\&\hspace{.5cm} + \|\Bar{\textbf{u}}(x,T)-\textbf{u}_d(T)\|\|\textbf{w}(x,T)\|\\
     &\leq \sum\limits_{i=1}^{k}\int\limits_{0}^{T}\|\Bar{\varphi}(x,t)-\Phi^i(t)\|_{L^\infty(\Omega)}\|\psi\|_{L^\infty(\Omega)} + \int\limits_{0}^{T}\|\Bar{\textbf{u}}-\textbf{u}_d\|\|\textbf{w}\| \\&\hspace{.5cm} + \|\Bar{\textbf{u}}(x,T)-\textbf{u}_d(T))\|_{L^2(\Omega)}\|\textbf{w}(x,T)\|_{L^2(\Omega)}
\end{align*}
The last inequality is obtained by combining the regularity results for $(\psi, \textbf{w})$ and the embedding, $L^2(0,T;H^2(\Omega))\hookrightarrow L^2(0,T; C(\Bar{\Omega}))$. Now using the energy estimate \eqref{eq2.18} and \eqref{eq3.25}, we have, 
\begin{align*}
     \int\limits_{0}^{T} |(\xi_\epsilon, U-\bar{U})| dt 
     &\leq C\Big( \sum\limits_{i=1}^{k}\int\limits_{0}^{T}\|\Bar{\varphi}(x,t)-\Phi^i(t)\|_{L^\infty(\Omega)}^2 + \int\limits_{0}^{T}\|\Bar{\textbf{u}}-\textbf{u}_d\|^2 \\&\hspace{.5cm}+ \sum\limits_{i=1}^{k}\|\Bar{\varphi}(x,T)-\Phi^i(T)\|_{L^\infty(\Omega)} + \|\Bar{\textbf{u}}(x,T)-\textbf{u}_d(T))\|_{L^2(\Omega)} \Big) \|U-\bar{U}\|_{L^2(0,T;H)}
\end{align*}
which holds for all $U\in L^2(0,T;V)$. This implies $\|\xi_{\epsilon}\|_{L^2(0,T;H)}\leq C$ , where $C$ is a positive constant independent of $\epsilon$.  
To derive estimates for $\textbf{v}_\epsilon$, we use a test function $\bm{\nu}= \textbf{v}_\epsilon$ in \eqref{eq2.52}, for $\epsilon >0$.We get,
\begin{align*}
   -\frac{1}{2}\frac{d}{dt}\|\textbf{v}_{\epsilon}\|^2- ((\Bar{\textbf{u}}\cdot\nabla)\textbf{v}_{\epsilon},\textbf{v}_{\epsilon}) + ((\textbf{v}_{\epsilon}\cdot\nabla)\Bar{\textbf{u}},\textbf{v}_{\epsilon}) + 2(\eta(\Bar{\varphi})D\textbf{v}_{\epsilon},D\textbf{v}_{\epsilon}) + (\xi\nabla\Bar{\varphi},\textbf{v}_{\epsilon}) &= (\Bar{\textbf{u}}-\textbf{u}_d, \textbf{v}_{\epsilon}) 
\end{align*}
Estimating each of its terms by applying H\"older's, Ladyzhenskaya, Gagliardo- Nirenberg and Young's inequalities, we obtain,
\begin{align*}
    ((\Bar{\textbf{u}}\cdot\nabla)\textbf{v}_{\epsilon},\textbf{v}_{\epsilon}) &= 0\\
    2(\eta(\Bar{\varphi})D\textbf{v}_{\epsilon},D\textbf{v}_{\epsilon}) &\geq 2\eta_0\|\nabla\textbf{v}_\epsilon\|^2\\
    |((\textbf{v}_{\epsilon}\cdot\nabla)\Bar{\textbf{u}},\textbf{v}_{\epsilon})| &\leq \|\nabla\Bar{\textbf{u}}\|\|\textbf{v}_{\epsilon}\|_{L^4}^2 \leq \|\nabla\Bar{\textbf{u}}\|\|\textbf{v}_{\epsilon}\|\|\nabla\textbf{v}_{\epsilon}\|\\ &\leq \frac{\eta_0}{2}\|\nabla\textbf{v}_{\epsilon}\|^2 + C\|\nabla\Bar{\textbf{u}}\|^2\|\textbf{v}_{\epsilon}\|^2\\
    |(\xi\nabla\Bar{\varphi},\textbf{v}_{\epsilon})| &\leq \|\xi\|\|\nabla\Bar{\varphi}\|_{L^4}\|\textbf{v}_{\epsilon}\|_{L^4} \leq \|\nabla\Bar{\varphi}\|^{1/2}\|\Bar{\varphi}\|_{H^2}^{1/2}\|\xi\|\|\nabla\textbf{v}_{\epsilon}\|\\& \leq \frac{\eta_0}{2}\|\nabla\textbf{v}_{\epsilon}\|^2 + C\|\nabla\Bar{\varphi}\|\|\Bar{\varphi}\|_{H^2}\|\xi\|^2 \\
    |(\Bar{\textbf{u}}-\textbf{u}_d, \textbf{v}_{\epsilon})| &\leq \|\Bar{\textbf{u}}-\textbf{u}_d\|\|\textbf{v}_{\epsilon}\| \leq \|\Bar{\textbf{u}}-\textbf{u}_d\|^2 + \|\textbf{v}_{\epsilon}\|^2
\end{align*}
We deduce from the above calculations that,
\begin{align*}
    -\frac{1}{2}\frac{d}{dt}\|\textbf{v}_{\epsilon}\|^2 + \eta_0\|\nabla\textbf{v}_\epsilon\|^2 \leq C(\|\nabla\Bar{\varphi}\|\|\Bar{\varphi}\|_{H^2}\|\xi\|^2 + \|\Bar{\textbf{u}}-\textbf{u}_d\|^2) +  C( 1+ \|\nabla\Bar{\textbf{u}}\|^2) \|\textbf{v}_{\epsilon}\|^2 
\end{align*}
Now integrating over the interval, $[t,T]$ and applying Gronwall's inequality in backward form, we obtain,
\begin{align}\label{eq2.55}
    \sup_{t\in [0,T]}\|\textbf{v}_{\epsilon}(t)\|^2 + \eta_0\int\limits_{0}^{T}\|\textbf{v}_\epsilon\|_{\mathbb{V}_{div}}^2 \leq C\Big[\|\Bar{\textbf{u}}(x,T)-\textbf{u}_d(T)\|_{\mathbb{G}_{div}}^2+ \int\limits_{0}^{T}(\|\nabla\Bar{\varphi}\|\|\Bar{\varphi}\|_{H^2}\|\xi\|^2 + \|\Bar{\textbf{u}}-\textbf{u}_d\|^2)dt\Big] 
\end{align}
It follows that $\|\textbf{v}_\epsilon\|_{L^2(0,T;\mathbb{V}_{div})\cap L^\infty(0,T;\mathbb{G}_{div})} \leq C$, where $C$ is a constant independent of $\epsilon$. Using the uniform estimates derived, we have, renamed subsequences, 
\begin{align*}
    \xi_\epsilon &\rightharpoonup \xi, \hspace{1cm}\text{in } L^2(0,T;H),\\
    \textbf{v}_\epsilon &\overset{\ast}{\rightharpoonup} \textbf{v}, \hspace{1cm}\text{in } L^\infty(0,T;\mathbb{G}_{div}),\\
    \textbf{v}_\epsilon &\rightharpoonup \textbf{v}, \hspace{1cm}\text{in } L^2(0,T;\mathbb{V}_{div}).
\end{align*}
Since $\Bar{\varphi}, \psi \in L^2(0,T; H^2(\Omega)) \hookrightarrow L^2(0,T;C(\Bar{\Omega}))$, we have, for each $i$, $1\leq i\leq k$, the following distributional convergences as $\epsilon \rightarrow 0$.  
\begin{align*}
    \frac{1}{|B(x_i,\epsilon)|}\int\limits_{0}^{T}\int_\Omega|\chi_{B(x_i,\epsilon)}(\Bar{\varphi}(x,t)-\Phi^i(t))\psi| &\rightarrow \int\limits_{0}^{T}(\Bar{\varphi}(x_i,t)-\Phi^i(t))\psi(x_i,t)dt, 
\end{align*}
Therefore we can pass to the limit in \eqref{eq2.54}, we get,
\begin{align}\label{eq3.56}
    \int\limits_{0}^{T}\int_\Omega \xi (U-\bar{U}) &= \int\limits_{0}^{T}\sum\limits_{i=1}^{k}(\Bar{\varphi}(x_i,t)-\Phi^i(t))\psi(x_i, t) dt + \int\limits_{0}^{T}\int_\Omega (\Bar{\textbf{u}}-\textbf{u}_d)\cdot\textbf{w}dxdt  + \int_\Omega (\Bar{\textbf{u}}(x,T)-\textbf{u}_d(x,T))\cdot\textbf{w}(x,T)dx,
\end{align}
hold true for all $U\in L^2(0,T;V)$. Hence the existence of a solution of \eqref{eq2.44}-\eqref{eq2.49} in the sense of definition \ref{def2}. Moreover, the uniqueness of the solution follows due to linearity of the system.
\end{proof}

\begin{theorem}\label{theorem3.8}
Let $\Bar{U}$ be an optimal control and $(\Bar{\varphi}, \Bar{\textbf{u}})$ be the associated state. Further, $(\xi, \textbf{v})$ be the adjoint variable as defined in definition \ref{def2} corresponding to $(U-\Bar{U})$. Then the following variational inequality holds true. 
\begin{align} \label{choc} 
\int\limits_{0}^{T} \int_\Omega (\xi+ \Bar{U})(U - \Bar{U}) \geq 0, \hspace{.5cm} \forall U \in \mathcal{U}_{ad}.
\end{align}  
\end{theorem}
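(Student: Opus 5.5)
The plan is to combine the first order condition of Theorem \ref{theorem2.5} with the transposition identity of Definition \ref{def2}, which Theorem \ref{theorem3.6} guarantees is satisfied by the adjoint pair $(\xi,\textbf{v})$. Fix $U\in\mathcal{U}_{ad}$ and set $h=U-\Bar U$. Since both $U$ and $\Bar U$ lie in $L^2(0,T;V\cap L^\infty(\Omega))$, we have $h\in L^2(0,T;H)$, so $h$ is an admissible source term in Definition \ref{def2}.

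Let $(\psi_{h},\textbf{w}_{h})$ denote the solution of the linearized system \eqref{eq2.3}-\eqref{eq2.8} with forcing $h$ and zero initial data. The zero initial data matches both Theorem \ref{theorem2.5} and Definition \ref{def2}, since $\mathcal{S}'(\Bar U)$ is obtained by linearizing with unperturbed initial conditions. Theorem \ref{theorem2.5} then gives
\begin{align*}
\int_0^T\sum_{i=1}^k(\Bar\varphi(x_i,t)-\Phi^i(t))\psi_h(x_i,t)\,dt+\int_0^T\!\!\int_\Omega(\Bar{\textbf{u}}-\textbf{u}_d)\cdot\textbf{w}_h+\int_\Omega(\Bar{\textbf{u}}(T)-\textbf{u}_d(T))\cdot\textbf{w}_h(T)+\int_0^T\!\!\int_\Omega h\,\Bar U\geq 0 .
\end{align*}
The first three terms are exactly the right-hand side of \eqref{eq2.50}, or equivalently \eqref{eq3.56}, evaluated with source $h$. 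By the transposition identity they therefore equal $\int_0^T\!\int_\Omega \xi\,h$. Substituting gives $\int_0^T\!\int_\Omega(\xi+\Bar U)(U-\Bar U)\geq 0$, which is \eqref{choc}.

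The main point to check is that the adjoint variable is independent of the direction $U-\Bar U$, despite the phrasing of the statement. Definition \ref{def2} asks for \eqref{eq2.50} to hold for every source in $L^2(0,T;H)$. Theorem \ref{theorem3.6} proves \eqref{eq3.56} for all $U$ by passing to the limit in the approximations $R_\epsilon$, and the limit $\xi$ is unique by linearity. So a single $\xi\in L^2(0,T;H)$ serves all directions, and the pointwise terms $\psi_h(x_i,t)$ make sense because $\psi_h\in L^2(0,T;H^2)\hookrightarrow L^2(0,T;C(\Bar\Omega))$ in two dimensions.
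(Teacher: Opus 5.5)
Your proposal is correct and matches the paper's proof. The paper also fixes the direction $U-\Bar{U}$, compares the first-order condition \eqref{foc} with the transposition identity \eqref{eq3.56}, and replaces the three tracking terms by $\int_0^T\!\int_\Omega \xi\,(U-\Bar U)$. Your remarks on the zero initial data and on $\xi$ being independent of the direction spell out points the paper leaves implicit.
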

\begin{proof}
    This results follows directly by comparing the optimality condition, \eqref{foc} and \eqref{eq3.56}. 
\end{proof}

\section{ Terminal Time Pointwise Tracking   Optimal Control }

We introduce a new cost functional which tracks the concentration pointwise at terminal time.
\begin{align}\label{J2}
\mathcal{J}_2(\varphi,\textbf{u}, U) &= \frac{1}{2}\sum\limits_{i=1}^{k}\int\limits_{0}^{T}(\varphi(x_i,t)-\Phi^i(t))^2 dt +\frac{1}{2}\int\limits_{0}^{T}\int_\Omega(\textbf{u}-\textbf{u}_d)^2 dxdt + \frac{1}{2}\sum\limits_{i=1}^{k}(\varphi(x_i,T)-\Phi^i(T))^2 \nonumber\\ &\hspace{.5cm}+ \frac{1}{2}\int_\Omega(\textbf{u}(x,T)-\textbf{u}_d(x,T))^2 dx
    + \frac{1}{2}\int\limits_{0}^{T}\int_\Omega|U|^2 dxdt,
\end{align}
where $\{ \Phi^i(t)\}_{1\leq i\leq k}$ denotes the desired trajectory at prescribed points $x_i, 1\leq i\leq k$ and $\textbf{u}_d$ is the desired velocity. Particularly, we focus on a constant trajectory, namely $\Phi^i(t)=\varphi^i$ for $1\leq i\leq k$, with $\varphi^i\in\mathbb{R}$.
We optimize above cost functional subject to the state system \eqref{eq8}–\eqref{eq14} and the set of admissible controls,
\begin{equation*}
\mathcal{U}_{ad}:= \{ U\in L^2(0,T;V\cap L^\infty(\Omega)): U_1(x,t) \leq U(x,t) \leq U_2(x,t), \text{ for a.e. } (x,t)\in \Omega\times(0,T)\}.
\end{equation*}

The corresponding optimal control problem is formulated as
$$\min_{U\in \mathcal{U}_{ad}}\{ \mathcal{J}_2(\varphi,\textbf{u},U) | \; (\varphi, \textbf{u}) \text{is the strong solution of \eqref{eq8} -\eqref{eq14} with control U} \}\hspace{1cm} \textbf{[OCP\RomanNumeralCaps{2}]}$$ 
This problem is a variant of the optimal control problem \textbf{[OCP\RomanNumeralCaps{1}]} introduced in Section 3 with an additional terminal time pointwise tracking term.
Note that for every $U\in\mathcal{U}_{ad}$, the existence and uniqueness of a strong solution, $(\varphi,\mathbf{u})$ to \eqref{eq8}–\eqref{eq14} follows from Propositions \ref{prop2} and Proposition \ref{prop3}. Moreover, by Aubin–Lions lemma, $\varphi \in C([0,T];C^{0,\gamma}(\Bar{\Omega}))$ for some $0< \gamma < 1$, ensuring that the pointwise tracking terms are well defined and thus $\mathcal{J}_2$ is well defined. As in Section 3, we define the control to state map, $\mathcal{S} : \mathcal{U} \rightarrow\mathcal{V}$, and reformulate \textbf{[OCP\RomanNumeralCaps{2}]} as follows.
$$\min_{U\in \mathcal{U}_{ad}}\{ \mathcal{J}_2(\mathcal{S}(U),U) \}.$$
 
We follow the similar strategy as in section 3, first we prove existence of optimal control, followed by differentiability of control to state map and then characterization of the control.

\begin{theorem}[Existence of an optimal control]\label{EOC2}
  Let $\varphi_0 \in H^3(\Omega)$, $\textbf{u}_0 \in \mathbb{V}_{div}$, $F$ satisfies \eqref{F5} and assumptions \emph{\textbf{[A1]}-\textbf{[A5]}} are satisfied. . Then the optimal control problem, \emph{\textbf{[OCP\RomanNumeralCaps{2}]}} admits a solution.  
\end{theorem}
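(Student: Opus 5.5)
We follow the direct method exactly as in the proof of Theorem \ref{theorem1}. The only new ingredient is the terminal pointwise term $\frac{1}{2}\sum_{i}(\varphi(x_i,T)-\Phi^i(T))^2$, which requires convergence of $\varphi_n(T)$ in $C(\Bar{\Omega})$.

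First, $\mathcal{J}_2\geq 0$, so $j=\inf_{U\in\mathcal{U}_{ad}}\mathcal{J}_2(\mathcal{S}(U),U)$ is finite, and we take a minimising sequence $U_n\in\mathcal{U}_{ad}$ with states $(\varphi_n,\textbf{u}_n)=\mathcal{S}(U_n)$. The box constraints $U_1\le U_n\le U_2$ give a uniform bound for $U_n$ in $L^\infty(Q)$, hence in $L^2(0,T;H)$. We also need a bound in $L^2(0,T;V)$; we assume, as in the proof of Theorem \ref{theorem1}, that minimising sequences can be taken bounded in $\mathcal{U}$. Since $\mathcal{U}_{ad}$ is convex and closed, a subsequence satisfies $U_n\rightharpoonup\Bar{U}$ in $L^2(0,T;V)$ and weakly-$\ast$ in $L^\infty(Q)$, with $\Bar{U}\in\mathcal{U}_{ad}$.

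The strong-solution a priori estimates behind Proposition \ref{prop2} depend on $U$ only through its norm in $L^2(0,T;V\cap L^\infty)$. They therefore give uniform bounds for $\varphi_n$ in $L^\infty(0,T;H^3)\cap L^2(0,T;H^4)\cap H^1(0,T;V)$ and for $\textbf{u}_n$ in $L^\infty(0,T;\mathbb{V}_{div})\cap L^2(0,T;\textbf{H}^2)\cap H^1(0,T;\mathbb{G}_{div})$. Aubin--Lions (Simon) compactness then yields, along a subsequence:
\begin{align*}
\varphi_n\to\Bar{\varphi} &\text{ strongly in } C([0,T];H^2(\Omega))\hookrightarrow C(\Bar{Q}),\\
\textbf{u}_n\to\Bar{\textbf{u}} &\text{ strongly in } C([0,T];\mathbb{G}_{div})\cap L^2(0,T;\mathbb{V}_{div}).
\end{align*}
These strong convergences, together with the weak convergence of $U_n$ in the linear source term, let us pass to the limit in the weak formulation \eqref{eqw1}--\eqref{eqw3}, including the nonlinear terms $\textbf{u}\cdot\nabla\varphi$, $m(\varphi)\nabla\mu$, $\nu(\varphi)D\textbf{u}$ and $\mu\nabla\varphi$. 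By uniqueness (Proposition \ref{prop3}), $(\Bar{\varphi},\Bar{\textbf{u}})=\mathcal{S}(\Bar{U})$.

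For the cost we split $\mathcal{J}_2$ into its pointwise part and the remaining quadratic part. The uniform convergence $\varphi_n\to\Bar{\varphi}$ on $\Bar{Q}$ gives $\varphi_n(x_i,t)\to\Bar{\varphi}(x_i,t)$ uniformly in $t$, including $t=T$. Hence both the time-integrated and the terminal pointwise terms converge. The convergence $\textbf{u}_n\to\Bar{\textbf{u}}$ in $C([0,T];\mathbb{G}_{div})$ makes the two velocity terms converge. The control term $\frac12\|U\|^2_{L^2(Q)}$ is weakly lower semicontinuous. Combining these, $j\le\mathcal{J}_2(\Bar{\varphi},\Bar{\textbf{u}},\Bar{U})\le\liminf_n\mathcal{J}_2(\varphi_n,\textbf{u}_n,U_n)=j$, so $\Bar{U}$ is optimal.

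The main obstacle is the terminal point evaluation $\varphi_n(x_i,T)$. The compactness used in Theorem \ref{theorem1}, weak convergence in $L^2(0,T;H^2)$, gives nothing at a fixed time. The key step is therefore to secure the uniform-in-$n$ strong bounds from Proposition \ref{prop2} and deduce convergence in $C([0,T];C^{0,\gamma}(\Bar{\Omega}))$; this is exactly where the $L^2(0,T;V\cap L^\infty)$ bound on the controls is needed.
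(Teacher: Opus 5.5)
Your argument is essentially the paper's proof: a minimising sequence, uniform state bounds, and Aubin--Lions compactness giving $\varphi_n\to\Bar{\varphi}$ in $C([0,T];C(\Bar{\Omega}))$, so that the terminal point values $\varphi_n(x_i,T)$ converge, with the remaining terms handled as in Theorem~\ref{theorem1}. The small differences (you take compactness from the full Proposition~\ref{prop2} regularity and use strong convergence for the velocity terms, whereas the paper uses an $L^\infty(0,T;H^2)$-level bound and weak lower semicontinuity) are immaterial.

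The one unproved step is the one you flag yourself, and the paper shares it, since it simply declares $\mathcal{U}_{ad}$ bounded. Neither the box constraints nor the $L^2(Q)$ control cost bound a minimising sequence in $L^2(0,T;V)$. Without that bound, the weak-$\ast$ limit $\Bar{U}$ need not lie in $L^2(0,T;V)$, i.e.\ in $\mathcal{U}_{ad}$. Both proofs therefore need an extra hypothesis, for example an $L^2(0,T;V)$ bound built into $\mathcal{U}_{ad}$ or a corresponding penalty term in $\mathcal{J}_2$.
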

\begin{proof}
The analysis follows the same strategy as in theorem \ref{theorem1}: Since the cost functional $\mathcal{J}_2$ is bounded from below, define,
$$j= \inf_{U\in \mathcal{U}_{ad}} \mathcal{J}_2(\varphi, \textbf{u}, U).$$ 
Let $\{U_n\}\subset\mathcal{U}_{ad}$ be a minimizing sequence such that $\mathcal{J}_2(\varphi_n,\mathbf{u}_n,U_n)\rightarrow j$, where $(\varphi_n,\mathbf{u}_n)$ denotes the strong solution corresponding to $U_n$. Using the boundedness of $\mathcal{U}_{ad}$ and the a priori estimates for the state system, we can extract a subsequence such that
$U_n \rightharpoonup \Bar{U},\,\,\text{ in } \mathcal{U}_{ad}$, and the associated state variables satisfy $(\varphi_n,\mathbf{u}_n)\rightarrow (\Bar{\varphi},\Bar{\mathbf{u}})$ in appropriate weak and strong topologies, where $(\Bar{\varphi},\Bar{\mathbf{u}})$ denotes the state corresponding to control $\Bar{U}$. In particular, by Aubin–Lions Lemma, $\varphi_n \to \Bar{\varphi} \,\,\text{ strongly in } C([0,T];C(\Bar{\Omega}))$.

Now to prove $j$ is the minimum value of $\mathcal{J}_2$, consider the cost functional,
\begin{align*}
   \mathcal{J}_2(\varphi,\textbf{u}, U) &=  \underbrace{\frac{1}{2}\sum\limits_{i=1}^{k}(\varphi(x_i,T)-\varphi^i(T))^2}_{\mathcal{J}^*} + \mathcal{J}_1(\varphi,\textbf{u}, U)
\end{align*}
Using the strong convergence of $\varphi_n$ in $ C([0,T];C(\Bar{\Omega}))$, which follows from the embedding $L^\infty(0,T; H^2(\Omega))\cap L^2(0,T;V') \hookrightarrow C([0,T];C(\Bar{\Omega}))$, we get that, $\underset{k\rightarrow \infty}{\lim} \mathcal{J}^*(\varphi_k, \textbf{u}_k, U_k) = \mathcal{J}^*(\Bar{\varphi}, \Bar{\textbf{u}}, \Bar{U})$. Further using the arguments as in the proof of theorem \ref{theorem1} for $\mathcal{J}_1(\varphi,\textbf{u}, U)$, it follows that, 
\begin{equation*}
    j\leq \mathcal{J}_2(\Bar{\varphi}, \Bar{\textbf{u}}, \Bar{U}) \leq \lim_{k\rightarrow \infty} \mathcal{J}_2(\varphi_k, \textbf{u}_k, U_k) = j
\end{equation*}
Hence the existence of an optimal control.
\end{proof}

\subsection{Differentiability of the control to state operator}

To deduce the necessary optimality conditions for \textbf{[OCP\RomanNumeralCaps{2}]}, we need to study the differentiability properties of the map $\mathcal{S}$. By exploiting theorem \ref{theorem3.4}, we have, $S$ is Fr\'etchet differentiable as a map from $\mathcal{U}$ to $\mathcal{Z}$. Due to the low regularity of pointwise tracking terms in the cost, we require a stronger differentiability result: we prove in the next subsection that the map $\mathcal{S} : \mathcal{U} \rightarrow \mathcal{V}$ is Fr\'etchet differentiable. To this end, we study the strong solution of the linearised system using uniform a priori estimates and compactness results.

Furthermore, due to the low regularity of source term in the adjoint system we study the solution of the system using the transposition method. The solution of the adjoint system using transposition necessitates to prove a higher regularity of the solution of the linearised system.

Under higher regularity of $F$ and  the initial data $ \psi_0$, we prove  the following theorem for existence of a strong solution to the linearised system.

\begin{theorem}[\textit{Existence of a strong solution to the Linearised system}]\label{theorem3.3}
Let all the hypotheses of Theorem \ref{theorem2} be satisfied. Further, assume $\psi_0\in H^2(\Omega)$ and $F\in C^4(\mathbb{R})$. Then a solution of the system \eqref{eq2.3}-\eqref{eq2.8} has the following additional regularity.
  \begin{align*}
      \psi\in L^\infty(0,T;H^2) \cap L^2(0,T;H^4).
  \end{align*}
\end{theorem}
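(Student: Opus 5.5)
We work at the level of the Galerkin scheme used in the proof of Theorem \ref{theorem2}. Since the basis $\{\zeta_i\}$ consists of eigenfunctions of the Neumann Laplacian, $\Delta^2\psi_n \in \Psi_n$ is an admissible test function in \eqref{eq2.9}. We take it, mirroring the proof of \eqref{cd3} in Proposition \ref{prop3}. This gives
\begin{align*}
\frac{1}{2}\frac{d}{dt}\|\Delta\psi_n\|^2 + \big(m(\Bar{\varphi})\Delta^2\psi_n,\Delta^2\psi_n\big) = \mathcal{R}_n,
\end{align*}
where $\mathcal{R}_n$ collects four groups of terms: the transport terms $(\Bar{\textbf{u}}\cdot\nabla\psi_n + \textbf{w}_n\cdot\nabla\Bar{\varphi},\Delta^2\psi_n)$; the lower order part of $\nabla\cdot(m(\Bar{\varphi})\nabla(-\Delta\psi_n))$, namely $m'(\Bar{\varphi})\nabla\Bar{\varphi}\cdot\nabla\Delta\psi_n$; the expansion of $\Delta(m(\Bar{\varphi})F''(\Bar{\varphi})\psi_n)$ and of $\nabla\cdot(m'(\Bar{\varphi})\psi_n\nabla\Bar{\mu})$; and the source term $(h,\Delta^2\psi_n)$. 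The left side controls $m_0\|\Delta^2\psi_n\|^2$, and by elliptic regularity for the Neumann bilaplacian this bounds $\|\psi_n\|_{H^4}$ up to $\|\psi_n\|$.

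We then bound each term of $\mathcal{R}_n$ by $\epsilon\|\Delta^2\psi_n\|^2 + g(t)\|\psi_n\|_{H^2}^2 + (\text{known})$, where $g\in L^1(0,T)$ depends only on the strong-solution norms from Proposition \ref{prop2} and on the bounds \eqref{eq2.18}, \eqref{eq2.22}. The estimates run as follows.
\begin{itemize}
\item $|(\Bar{\textbf{u}}\cdot\nabla\psi_n,\Delta^2\psi_n)|\le \|\Bar{\textbf{u}}\|_{L^4}\|\nabla\psi_n\|_{L^4}\|\Delta^2\psi_n\|$, and $\Bar{\textbf{u}}\in L^\infty(0,T;\mathbb{V}_{div})$.
\item $|(\textbf{w}_n\cdot\nabla\Bar{\varphi},\Delta^2\psi_n)|\le \|\nabla\textbf{w}_n\|\,\|\Bar{\varphi}\|_{H^2}\|\Delta^2\psi_n\|$, whose right-hand factor is integrable by \eqref{eq2.18}.
\item The term $m'(\Bar{\varphi})\nabla\Bar{\varphi}\cdot\nabla\Delta\psi_n$ is bounded using $\|\nabla\Bar{\varphi}\|_{L^\infty}\le C\|\Bar{\varphi}\|_{H^3}$ together with $\|\nabla^3\psi_n\|$. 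The latter is interpolated between $H^2$ and $H^4$, or left as the $L^2(0,T)$ quantity already controlled in \eqref{eq2.22}.
\item The $F$-terms involve $F''(\Bar{\varphi})$, $F'''(\Bar{\varphi})$ and $F^{(4)}(\Bar{\varphi})$ multiplied by $(\nabla\Bar{\varphi})^2\psi_n$, $\Delta\Bar{\varphi}\,\psi_n$, $\nabla\Bar{\varphi}\cdot\nabla\psi_n$ and $\Delta\psi_n$. This is exactly where $F\in C^4$ is needed. We bound them with Agmon's inequality and with $L^\infty$ bounds on $\Bar{\varphi}$ and on $F^{(j)}(\Bar{\varphi})$, which follow from $\Bar{\varphi}\in L^\infty(0,T;H^3)$.
\end{itemize}

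The main obstacle is the term $\big(\nabla\cdot(m'(\Bar{\varphi})\psi_n\nabla\Bar{\mu}),\Delta^2\psi_n\big)$, which contains $\psi_n\Delta\Bar{\mu}$ and $m''(\Bar{\varphi})\psi_n\nabla\Bar{\varphi}\cdot\nabla\Bar{\mu}$. Here $\Delta\Bar{\mu}$ is only in $L^2(0,T;H^1)$, since $\Bar{\mu}\in L^2(0,T;H^3)$. We estimate
\begin{align*}
\|\psi_n\|_{L^\infty}\|\Delta\Bar{\mu}\|\,\|\Delta^2\psi_n\| \le \epsilon\|\Delta^2\psi_n\|^2 + C\|\Bar{\mu}\|_{H^2}^2\|\psi_n\|_{H^2}^2,
\end{align*}
with $\|\Bar{\mu}\|_{H^2}^2\in L^1(0,T)$, and treat the gradient term similarly using $\|\nabla\Bar{\mu}\|_{L^4}\le C\|\Bar{\mu}\|_{H^2}$. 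This is the same mechanism as the $\mu_2$-terms in the proof of \eqref{cd3}. The source term is handled by $|(h,\Delta^2\psi_n)|\le \epsilon\|\Delta^2\psi_n\|^2 + C\|h\|^2$.

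Choosing $\epsilon$ small and using the elliptic estimate $\|\psi_n\|_{H^2}^2\le C(\|\Delta\psi_n\|^2+\|\psi_n\|^2)$, we arrive at
\begin{align*}
\frac{d}{dt}\|\Delta\psi_n\|^2 + m_0\|\Delta^2\psi_n\|^2 \le C\|h\|^2 + C\|\nabla\textbf{w}_n\|^2 + g(t)\big(\|\Delta\psi_n\|^2+\|\psi_n\|^2\big).
\end{align*}
Gronwall's lemma, together with $\|P_n\psi_0\|_{H^2}\le C\|\psi_0\|_{H^2}$ and \eqref{eq2.18}, yields bounds on $\psi_n$ in $L^\infty(0,T;H^2)\cap L^2(0,T;H^4)$ that are uniform in $n$. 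Finally, weak-$\ast$ lower semicontinuity, combined with uniqueness of the weak solution from Theorem \ref{theorem2}, transfers this regularity to $\psi$.
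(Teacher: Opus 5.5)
Your proposal follows the paper's proof essentially step for step: Galerkin in the Neumann eigenbasis, testing \eqref{eq2.9} with $\Delta^2\psi_n$, term-by-term bounds (including the same $\|\psi_n\|_{L^\infty}\|\Delta\bar\mu\|$ treatment of $\psi_n\Delta\bar\mu$), Gronwall, and passage to the limit; your direct estimate of $(\bar{\textbf{u}}\cdot\nabla\psi_n,\Delta^2\psi_n)$ via $\|\bar{\textbf{u}}\|_{L^4}\|\nabla\psi_n\|_{L^4}$ is in fact more accurate than the paper, which asserts that this term vanishes (it does not in general). One caveat, which the paper's bound \eqref{eq3.28} shares: $\|P_n\psi_0\|_{H^2}\le C\|\psi_0\|_{H^2}$ holds only if $\partial\psi_0/\partial\textbf{n}=0$, i.e.\ $\psi_0\in H^2_N$ (otherwise $\|P_n\psi_0\|_{H^2}\to\infty$, and the conclusion $\psi\in L^\infty(0,T;H^2)$ itself forces $\psi_0\in H^2_N$, since $\psi(t)\in H^2_N$ for a.e.\ $t$ and $\psi\in C([0,T];V)$), so this compatibility condition should be added; it is harmless in the application, where $\psi_0=0$.
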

\begin{proof}
We will derive a-priori estimates for the solution of finite dimensional problems as a continuation of the previous proof of Theorem \ref{theorem2}. Later we pass to the limit as $n\rightarrow \infty$ to prove the result. We use a test function $\Delta^2\psi_n$ in \eqref{eq2.9}. Hence, 
\begin{align}\label{eq2.030}
     \frac{1}{2}\frac{d}{dt}\|\Delta\psi_n\|^2 + (\Bar{\textbf{u}}\cdot\nabla\psi_n,\Delta^2\psi_n)  + (\textbf{w}_n\cdot\nabla\Bar{\varphi},\Delta^2\psi_n) &= \big(\nabla\cdot(m(\Bar{\varphi})\nabla\Tilde{\mu}_n), \Delta^2\psi_n\big)+ \big(\nabla\cdot( m'(\Bar{\varphi})\psi_n\nabla\Bar{\mu}, \Delta^2\psi_n\big) \nonumber\\&\hspace{.5cm}+ (h,\Delta^2\psi_n),
\end{align}
Where $\tilde{\mu}_n := (-\Delta \psi_n + F''(\Bar{\varphi})\psi_n)$ and $\Bar{\mu} := (-\Delta \Bar{\varphi}+ F'(\Bar{\varphi}))$. Observe that, from proposition \ref{prop2} and Theorem \ref{theorem2}, it follows that $\Tilde{\mu}_n \in L^2(0,T;V)$ and $\Bar{\mu}\in L^2(0,T;H^2(\Omega))\cap L^\infty(0,T;V)$.
We estimate each of the terms of \eqref{eq2.030} by applying  H\"older's, Gagliardo- Nirenberg, Poincar\'e, sobolev, and Young's inequalities.
\begin{align*}
(\Bar{\textbf{u}}\cdot\nabla\psi_n,\Delta^2\psi_n) &= 0,\\
|(\textbf{w}_n\cdot\nabla\Bar{\varphi},\Delta^2\psi_n)| &\leq \|\textbf{w}_n\|_{L^4}\|\nabla\Bar{\varphi}\|_{L^4}\|\Delta^2\psi_n\| \leq \|\nabla\textbf{w}_n\|\|\nabla\Bar{\varphi}\|^{1/2}\|\Bar{\varphi}\|_{H^2}^{1/2}\|\Delta^2\psi_n\| \\&\leq \epsilon\|\Delta^2\psi_n\|^2 + C\|\nabla\Bar{\varphi}\|\|\Bar{\varphi}\|_{H^2}\|\nabla\textbf{w}_n\|^2,\\
    \big(\nabla\cdot(m(\Bar{\varphi})\nabla\Tilde{\mu}_n), \Delta^2\psi_n\big) &= \big(m'(\Bar{\varphi})\nabla\Bar{\varphi}\nabla\Tilde{\mu}_n, \Delta^2\psi_n\big) + \big(m(\Bar{\varphi})\Delta\Tilde{\mu}_n, \Delta^2\psi_n\big), \end{align*}
    The first term on R.H.S. of the above equation can be estimated as 
    \begin{align*}
|\big(m'(\Bar{\varphi})\nabla\Bar{\varphi}\nabla\Tilde{\mu}_n, \Delta^2\psi_n\big)| &\leq C\|\nabla\Bar{\varphi}\|_{L^\infty}\|\nabla\Tilde{\mu}_n\|\|\Delta^2\psi_n\| \leq C\|\Bar{\varphi}\|_{H^3}\|\nabla\Tilde{\mu}_n\|\|\Delta^2\psi_n\| \\&\leq \epsilon\|\Delta^2\psi_n\|^2 + C\|\Bar{\varphi}\|_{H^3}^2\|\nabla\Tilde{\mu}_n\|^2,
\end{align*}
The second term on R.H.S. can be written as 
\begin{align*}
    \big(m(\Bar{\varphi})\Delta\Tilde{\mu}_n, \Delta^2\psi_n\big) &= \big(m(\Bar{\varphi})\Delta(-\Delta \psi_n + F''(\Bar{\varphi})\psi_n), \Delta^2\psi_n\big)\\
    &= -\big(m(\Bar{\varphi})\Delta^2\psi_n, \Delta^2\psi_n\big) + \big(m(\Bar{\varphi})F^{(4)}(\Bar{\varphi})(\nabla\Bar{\varphi})^2\psi_n, \Delta^2\psi_n\big)  \\& +  \big(m(\Bar{\varphi})F'''(\Bar{\varphi})\Delta\Bar{\varphi}\psi_n, \Delta^2\psi_n\big) +  2\big(m(\Bar{\varphi})F'''(\Bar{\varphi})\nabla\Bar{\varphi}\nabla\psi_n, \Delta^2\psi_n\big)  \\&+  \big(m(\Bar{\varphi})F''(\Bar{\varphi})\Delta\psi_n, \Delta^2\psi_n\big).\\
    &= \Sigma_{j=1}^{5} I_j
\end{align*}
By direct computation it follows that,
\begin{align*}
    -I_1 &\geq m_0\|\Delta^2 \psi_n\|^2,\\
    |I_2| &\leq \|\nabla\Bar{\varphi}\|_{L^8}^2\|\psi_n\|_{L^4}\|\Delta^2\psi_n\| \leq C\|\Delta\Bar{\varphi}\|^{3/2}\|\Bar{\varphi}\|_{L^\infty}^{1/2}\|\psi_n\|^{1/2}\|\psi_n\|_{V}^{1/2}\|\Delta^2\psi_n\| \\&\leq \epsilon\|\Delta^2\psi_n\|^2 + C\|\Delta\Bar{\varphi}\|^3\|\Bar{\varphi}\|_{L^\infty}\|\psi_n\|\|\psi_n\|_{V}\\& \leq \epsilon\|\Delta^2\psi_n\|^2 + C\|\Bar{\varphi}\|_{H^2}^4\|\psi_n\|_{V}^2,\\
    |I_3| &\leq C\|\Delta\Bar{\varphi}\|_{L^4}\|\psi_n\|_{L^4}\|\Delta^2\psi_n\| \leq C\|\Bar{\varphi}\|_{H^3}^{3/4}\|\Bar{\varphi}\|_{L^\infty}^{1/4}\|\psi_n\|^{1/2}\|\psi_n\|_{V}^{1/2}\|\Delta^2\psi_n\|\\
    &\leq \epsilon\|\Delta^2\psi_n\|^2 + C\|\Bar{\varphi}\|_{H^3}^{3/2}\|\Bar{\varphi}\|_{L^\infty}^{1/2}\|\psi_n\|\|\psi_n\|_{V}\\ &\leq \epsilon\|\Delta^2\psi_n\|^2 + C\|\Bar{\varphi}\|_{H^3}^2\|\psi_n\|_{V}^2,
 \end{align*}
 \begin{align*}   
    |I_4| &\leq C\|\nabla\Bar{\varphi}\|_{L^4}\|\nabla\psi_n\|_{L^4}\|\Delta^2\psi_n\| \leq C\|\Bar{\varphi}\|_{H^2}^{3/4}\|\Bar{\varphi}\|^{1/4}\|\psi_n\|_{H^2}^{3/4}\|\psi_n\|^{1/4}\|\Delta^2\psi_n\| \\ &\leq \epsilon\|\Delta^2\psi_n\|^2 + C\|\Bar{\varphi}\|_{H^2}^{3/2}\|\Bar{\varphi}\|^{1/2}\|\psi_n\|_{H^2}^{3/2}\|\psi_n\|^{1/2}\\
    &\leq \epsilon\|\Delta^2\psi_n\|^2 + C\|\Bar{\varphi}\|_{H^2}^2\|\psi_n\|_{H^2}^2 + C\|\Bar{\varphi}\|_{H^2}\|\Bar{\varphi}\|\|\psi_n\|_{H^2}\|\psi_n\|\\
    &\leq \epsilon\|\Delta^2\psi_n\|^2 + C\|\Bar{\varphi}\|_{H^2}^2\|\psi_n\|_{H^2}^2 + C\|\Bar{\varphi}\|^2\|\psi_n\|^2,\\
    |I_5| &\leq C\|\Delta\psi_n\|\|\Delta^2\psi_n\| \leq \epsilon\|\Delta^2\psi_n\|^2 + C\|\Delta\psi_n\|^2.
\end{align*}
Note that, we have used Sobolev embeddings to estimate the terms $I_2$ and $I_3$.
Further, note that the second term on R.H.S. of \eqref{eq2.030} can be expanded as follows,
\begin{align*}
  \big(\nabla\cdot( m'(\Bar{\varphi})\psi_n\nabla\Bar{\mu}), \Delta^2\psi_n\big) &=  \big(m''(\Bar{\varphi})\nabla\Bar{\varphi}\psi_n\nabla\Bar{\mu}, \Delta^2\psi_n\big) +  \big( m'(\Bar{\varphi})\nabla\psi_n\nabla\Bar{\mu}, \Delta^2\psi_n\big) +  \big( m'(\Bar{\varphi})\psi_n\Delta\Bar{\mu}, \Delta^2\psi_n\big).
\end{align*}
    We denote the terms on R.H.S. of the above equation by $J_1, J_2$ and $J_3$. 
\begin{align*}
    |J_1| &\leq \|\nabla\Bar{\varphi}\|_{L^8}\|\psi_n\|_{L^8}\|\nabla\Bar{\mu}\|_{L^4}\|\Delta^2\psi_n\| \\&\leq \|\Bar{\varphi}\|_{H^2}^{3/4}\|\Bar{\varphi}\|_{L^\infty}^{1/4}\|\psi_n\|_{V}^{3/4}\|\psi_n\|^{1/4}\|\Bar{\mu}\|_{H^2}^{3/4}\|\Bar{\mu}\|^{1/4}\|\Delta^2\psi_n\| \\
    &\leq \epsilon\|\Delta^2\psi_n\|^2+ \|\Bar{\varphi}\|_{H^2}^{3/2}\|\Bar{\varphi}\|_{L^\infty}^{1/2}\|\psi_n\|_{V}^{3/2}\|\psi_n\|^{1/2}\|\Bar{\mu}\|_{H^2}^{3/2}\|\Bar{\mu}\|^{1/2}\\
    &\leq \epsilon\|\Delta^2\psi_n\|^2+ \|\Bar{\varphi}\|_{H^2}^2\|\Bar{\mu}\|_{H^2}^2\|\psi_n\|_{V}^2  + \|\Bar{\varphi}\|_{H^2}\|\Bar{\varphi}\|_{L^\infty}\|\psi_n\|_{V}\|\psi_n\|\|\Bar{\mu}\|_{H^2}\|\Bar{\mu}\|\\ &\leq \epsilon\|\Delta^2\psi_n\|^2+ \|\Bar{\varphi}\|_{H^2}^2\|\Bar{\mu}\|_{H^2}^2\|\psi_n\|_{V}^2  + \|\Bar{\varphi}\|_{L^\infty}^2\|\Bar{\mu}\|^2\|\psi_n\|^2,
\end{align*}
\begin{align*}
    |J_2| &\leq \|\nabla\psi_n\|_{L^4}\|\nabla\Bar{\mu}\|_{L^4}\|\Delta^2\psi_n\| \leq \|\psi_n\|_{H^2}^{3/4}\|\psi_n\|^{1/4}\|\Bar{\mu}\|_{H^2}^{3/4}\|\Bar{\mu}\|^{1/4}\|\Delta^2\psi_n\|\\
    &\leq \epsilon\|\Delta^2\psi_n\|^2 + \|\psi_n\|_{H^2}^{3/2}\|\psi_n\|^{1/2}\|\Bar{\mu}\|_{H^2}^{3/2}\|\Bar{\mu}\|^{1/2}\\ &\leq \epsilon\|\Delta^2\psi_n\|^2 + C\|\Bar{\mu}\|_{H^2}^2\|\psi_n\|_{H^2}^2 + \|\Bar{\mu}\|^2\|\psi_n\|^2,\\
    |J_3| &\leq \|\psi_n\|_{L^\infty}\|\Delta\Bar{\mu}\|\|\Delta^2\psi_n\| \leq \|\psi_n\|^{1/2}\|\psi_n\|_{H^2}^{1/2}\|\Delta\Bar{\mu}\|\|\Delta^2\psi_n\| \\
    &\leq \epsilon\|\Delta^2\psi_n\|^2 + C\|\psi_n\|(\|\psi_n\| + \|\Delta^2\psi_n\|)\|\Delta\Bar{\mu}\|^2\\
    &\leq 2\epsilon\|\Delta^2\psi_n\|^2 + C(1+\|\Delta\Bar{\mu}\|^4)\|\psi_n\|^2\\
    |(h,\Delta^2\psi_n)| &\leq \|h\|\|\Delta^2\psi_n\| \leq \epsilon\|\Delta^2\psi_n\|^2 + C\|h\|^2.
\end{align*}
Finally, by combining all above estimates in \eqref{eq2.030}, and for a choice of $\epsilon = \frac{m_0}{22}$, we obtain, 
\begin{align*}
     \frac{1}{2}\frac{d}{dt}\|\Delta\psi_n\|^2 + \frac{m_0}{2}\|\Delta^2\psi_n\|^2 &\leq  C\|\nabla\Bar{\varphi}\|\|\Bar{\varphi}\|_{H^2}\|\nabla\textbf{w}_n\|^2+ C\|\Bar{\varphi}\|_{H^3}^2\|\nabla\Tilde{\mu}_n\|^2 + C\|h\|^2 \\&\hspace{.5cm}+ C\big( 1 + \|\Bar{\varphi}\|^2+ \|\Bar{\varphi}\|_{L^\infty}^2\|\Bar{\mu}\|^2 + \|\Bar{\mu}\|^2 +\|\Delta\Bar{\mu}\|^4 \big)\|\psi_n\|^2\\&\hspace{.5cm} + C \big( \|\Bar{\varphi}\|_{H^2}^4 + \|\Bar{\varphi}\|_{H^3}^2 + \|\Bar{\varphi}\|_{H^2}^2\|\Bar{\mu}\|_{H^2}^2 \big)\|\psi_n\|_{V}^2 \\&\hspace{.5cm} + C\big( 1+ \|\Bar{\varphi}\|_{H^2}^2+ \|\Bar{\mu}\|_{H^2}^2 \big)\|\Delta\psi_n\|^2.
\end{align*}
holds for every $t\in [0,T]$. Then by applying Gronwall's lemma, followed by an application of elliptic estimates \cite{LKW} we deduce, 
\begin{align}\label{eq3.28}
    \sup_{t\in [0,T]}\|\psi_n(t)\|_{H^2}+ \|\psi_n\|_{L^2(0,T;H^4)} \leq C(1+ \|\psi_n(0)\|_{H^2}^2 + \|\textbf{w}_n(0)\|^2 + \|h\|_{L^2(0,T;H)}^2).
\end{align}
Hence, the result follows by passing to the limit along a weakly convergent subsequence by repeating the arguments used in theorem \ref{theorem2}.
\end{proof}

\begin{remark}
 By exploiting Aubin-Lions lemma, $L^\infty(0,T;H^2)\cap H'(0,T;V') \hookrightarrow \hookrightarrow C([0,T];C^{0,\gamma}(\Bar{\Omega}))$, for $0< \gamma< 1$, the strong solution of the linearised system, $\psi \in C([0,T];C^{0,\gamma}(\Bar{\Omega}))$. This regularity of $\psi$ is necessary to derive the local differentiability of the control to state map as well as to define the adjoint variable. Similarly for the velocity, we deduce the regularity, $\textbf{w} \in C([0,T]; \mathbb{G}_{div})$.
\end{remark}

Now we prove that the control to state map  $\mathcal{S}$ can be shown to be differentiable as a mapping from $\mathcal{U} \rightarrow \mathcal{V}$; if  we  assume that the potential, $F\in C^5(\mathbb{R})$ and $m \in C^3(\mathbb{R})$. See the following theorem.
\begin{theorem}[\textit{Differentiability of the control to state operator}]\label{prop4.3}
    Let all the hypotheses of Theorem \ref{theorem3.4} are satisfied. Additionally assume $F\in C^5(\mathbb{R})$ and $m\in C^3(\mathbb{R})$. Then the control to state operator, $\mathcal{S}$ as a map from $\mathcal{U}$ to $\mathcal{V}$ is Fr\'etchet differentiable.  
\end{theorem}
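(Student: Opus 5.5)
The plan is to reuse the remainder system from the proof of Theorem \ref{theorem3.4} and upgrade the estimates for the phase component. With $h\in\mathcal{U}$, set $\xi=\varphi_h-\Bar{\varphi}$, $\bm{\tau}=\textbf{u}_h-\Bar{\textbf{u}}$, $\rho=\xi-\psi_h$ and $\textbf{v}=\bm{\tau}-\textbf{w}_h$. Then $(\rho,\textbf{v})$ solves \eqref{eq2.30}--\eqref{eq2.35} with zero initial data. The velocity part of the $\mathcal{V}$-norm coincides with the one in $\mathcal{Z}$, and \eqref{eq3.43} already gives $\|\textbf{v}\|^2_{L^\infty(\mathbb{G}_{div})\cap L^2(\mathbb{V}_{div})}\le C\|h\|_{\mathcal{U}}^4$ together with the bound on $\textbf{v}'$. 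It therefore suffices to prove
\begin{equation*}
\sup_{t\in[0,T]}\|\rho\|_{H^2}^2+\|\rho\|_{L^2(0,T;H^4)}^2+\|\rho'\|_{L^2(0,T;V')}^2\le C\|h\|_{\mathcal{U}}^4 .
\end{equation*}
From this, $\|(\rho,\textbf{v})\|_{\mathcal{V}}/\|h\|_{\mathcal{U}}\le C\|h\|_{\mathcal{U}}\to0$, exactly as in \eqref{eq2.38a}.

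The steps are as follows. First, I would record the available bounds. By \eqref{cd1}--\eqref{cd3} (applicable since $F\in C^4$), $\|\xi\|_{L^\infty(H^2)\cap L^2(H^4)}+\|\bm{\tau}\|_{L^\infty(\mathbb{G}_{div})\cap L^2(\mathbb{V}_{div})}\le C\|h\|_{L^2(H)}$. Theorems \ref{theorem2} and \ref{theorem3.3} give uniform bounds on $(\psi_h,\textbf{w}_h)$ in $\mathcal{V}$, and Proposition \ref{prop2} bounds $(\Bar{\varphi},\Bar{\textbf{u}},\Bar{\mu})$, and hence also $\varphi_h$, uniformly.

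Second, I would test \eqref{eq2.30} with $-\Delta\rho$ to get $L^\infty(V)\cap L^2(H^3)$ bounds, in the same way as in the proof of \eqref{cd2}. Third, I would test with $\Delta^2\rho$, following the proof of Theorem \ref{theorem3.3}. The linear terms in $\rho$ and $\textbf{v}$, namely $\Bar{\textbf{u}}\cdot\nabla\rho$, $\textbf{v}\cdot\nabla\Bar{\varphi}$, $\nabla\cdot(m(\Bar{\varphi})\nabla(-\Delta\rho+F''(\Bar{\varphi})\rho))$ and $\nabla\cdot(m'(\Bar{\varphi})\rho\nabla\Bar{\mu})$, are handled verbatim as the $I_j$ and $J_j$ estimates there. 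They produce $\frac{m_0}{2}\|\Delta^2\rho\|^2$ on the left and a Gronwall coefficient that is integrable in time, using $\|\nabla\textbf{v}\|^2$ from \eqref{eq3.43}.

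The genuinely new work is the quadratic remainder terms. These are $\bm{\tau}\cdot\nabla\xi$, $\nabla\cdot(m'(\Bar{\varphi})\xi\nabla(-\Delta\xi+F'(\varphi_h)-F'(\Bar{\varphi})))$, $\nabla\cdot(m(\Bar{\varphi})\nabla(P_h\xi\nabla\xi+Q_h\xi^2\nabla\Bar{\varphi}))$-type terms coming from \eqref{F12}, and $\nabla\cdot((m''(\hat\varphi)\xi^2)\nabla\Bar{\mu})$. Paired with $\Delta^2\rho$, they require estimating their $L^2(0,T;H)$ norms. Doing so means taking one more derivative of \eqref{F12}. That step involves $F^{(5)}$ through the derivatives of $P_h$ and $Q_h$, and $m'''$ through $\Delta(m(\varphi_h)-m(\Bar{\varphi})-m'(\Bar{\varphi})\psi)$, which explains the hypotheses $F\in C^5$, $m\in C^3$. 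Each such term is a product of two "small" factors in $\xi$ or $\bm{\tau}$ times bounded coefficients. For example, $\|\xi\,\nabla\Delta^{\,}\nabla\xi\|\le\|\xi\|_{L^\infty}\|\xi\|_{H^4}$, and its square integrates to $\le\|\xi\|^2_{L^\infty(H^2)}\|\xi\|^2_{L^2(H^4)}\le C\|h\|^4$. Similarly, $\|\bm{\tau}\cdot\nabla\xi\|^2\le\|\bm{\tau}\|_{L^4}^2\|\nabla\xi\|_{L^4}^2$ integrates to $\le C\|h\|^4$. After Young's inequality, Gronwall then gives the $L^\infty(H^2)\cap L^2(H^4)$ estimate with right-hand side $C\|h\|^4_{\mathcal{U}}$, using the elliptic estimate for $\rho$. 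Finally, $\rho'\in L^2(V')$ follows by comparison in \eqref{eq2.30}, as in \eqref{eq3.24}.

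The main obstacle is the highest-order nonlinear pieces, especially $m'(\Bar{\varphi})\xi\nabla\Delta\xi$ after two more derivatives and the term $\Delta(\xi\nabla\xi\,P_h)$. For these I would not expand everything. Instead I would integrate by parts once, moving a Laplacian onto the test function only where it helps, and then always place the top-order derivative on one copy of $\xi$ in $L^2(0,T;H^4)$ and the other copy in $L^\infty(0,T;L^\infty\cap W^{1,4})$, which is controlled by $L^\infty(H^2)$. If the cross term $\bm{\tau}\cdot\nabla\xi$ needs $\bm{\tau}$ in a stronger norm, I would fall back on the $\mathbb{V}_{div}$-stability of the velocity from the strong-solution argument of \cite{LKW}.
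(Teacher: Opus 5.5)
Your proposal follows the paper's own route: you reuse \eqref{eq3.43} for the velocity and lower-order parts and test the remainder equation \eqref{eq2.30} with $\Delta^2\rho$, exactly as in Proposition \ref{propA} of the appendix; you control the quadratic terms through the stability estimates \eqref{cd1}--\eqref{cd3}, which is precisely where $F^{(5)}$ and $m'''$ enter, close with Gronwall and the elliptic estimate, and recover $\rho'\in L^2(0,T;V')$ by comparison, with your preliminary $-\Delta\rho$ test being a harmless extra step the paper skips. One small caution if you copy the estimates of Theorem \ref{theorem3.3} verbatim: the convective term $(\bar{\textbf{u}}\cdot\nabla\rho,\Delta^2\rho)$ does not vanish as claimed there and in the appendix, but it is easily bounded by $\epsilon\|\Delta^2\rho\|^2 + C\|\nabla\bar{\textbf{u}}\|^2\|\rho\|_{H^2}^2$, using $\bar{\textbf{u}}\in L^\infty(0,T;\mathbb{V}_{div})$.
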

 Let $ \rho$ and $\textbf{v}$ be as defined in  Theorem \ref{theorem3.4} .
 The proof of differentiability is done using the same techniques as in theorem \ref{theorem3.4}. However since we are proving the differentiability of control to state map in $\mathcal{V}$ we need extra higher regular estimates. Using the estimate \eqref{eq3.43} and higher regularity estimates proved in the appendix, we can get, 
\begin{align}\label{eq4.4}
    \frac{\|(\mathcal{S}(\Bar{U}+ h )- \mathcal{S}(\Bar{U})- (\psi_{h}, \textbf{w}_{h}))\|_{\mathcal{V}}}{\|h\|_{\mathcal{U}}}  = \frac{\|(\rho, \textbf{v})\|_{\mathcal{V}}}{\|h\|_{\mathcal{U}}} \leq \|h\|_{\mathcal{U}}\rightarrow 0, \hspace{.5cm}\text{ as }\|h\|_{\mathcal{U}}\rightarrow 0.
\end{align}
Further by applying the Aubin-Lions lemma, we obtain the regularity $(\rho, \textbf{v}) \in C(\bar{\Omega} \times [0,T]) \times C([0,T];\mathbb{G}_{div})$.

\subsection{First order necessary optimality condition}

\begin{theorem} \label{theorem4.4} 
Let $\Bar{U}$ be an optimal control, and $(\Bar{\varphi}, \Bar{\textbf{u}})$ be the associated state. Then, the following inequality holds:  
\begin{align} \label{foc2} 
\int\limits_{0}^{T} \sum\limits_{i=1}^{k} (\Bar{\varphi}(x_i,t) - \Phi^i(t)) \psi_{U-\Bar{U}}(x_i,t) &+ \sum\limits_{i=1}^{k} (\Bar{\varphi}(x_i,T) - \Phi^i(T)) \psi_{U-\Bar{U}}(x_i,T)\nonumber \\  
+ \int\limits_{0}^{T} \int_\Omega (\Bar{\textbf{u}} - \textbf{u}_d)\cdot \textbf{w}_{U-\Bar{U}} &+ \int_\Omega (\Bar{\textbf{u}}(x,T) - \textbf{u}_d(T))\cdot \textbf{w}_{U-\Bar{U}}(T) + \int\limits_{0}^{T} \int_\Omega (U - \Bar{U}) \Bar{U} \geq 0, \hspace{.5cm} \forall U \in \mathcal{U}_{ad}.
\end{align}  
 Here, $(\psi_{U-\Bar{U}}, \textbf{w}_{U-\Bar{U}})$ is the solution of the linearized system \eqref{eq2.3}-\eqref{eq2.8} corresponding to the source term $U - \Bar{U}$.  
\end{theorem}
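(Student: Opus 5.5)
The plan follows the proof of Theorem \ref{theorem2.5}, with one extra term to handle. Set $j(U)=\mathcal{J}_2(\mathcal{S}(U),U)$. Since $\mathcal{U}_{ad}$ is convex and $\Bar{U}$ minimises $j$ over it, for every $U\in\mathcal{U}_{ad}$ and $\beta\in(0,1]$ we have $\Bar{U}+\beta(U-\Bar{U})\in\mathcal{U}_{ad}$. Hence
\begin{equation*}
0\le \frac{j(\Bar{U}+\beta(U-\Bar{U}))-j(\Bar{U})}{\beta},
\end{equation*}
and it suffices to identify the limit of this quotient as $\beta\to0^+$ with the left-hand side of \eqref{foc2}.

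Write $(\varphi_\beta,\textbf{u}_\beta)=\mathcal{S}(\Bar{U}+\beta(U-\Bar{U}))$. We split $\mathcal{J}_2=\mathcal{J}^*+\mathcal{J}_1$, where $\mathcal{J}^*$ is the terminal pointwise term, as in the proof of Theorem \ref{EOC2}.

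For the $\mathcal{J}_1$ part, the difference quotient converges exactly as in the proof of Theorem \ref{theorem2.5}. It yields the time-integrated pointwise term, the two velocity terms and $\int_0^T\!\int_\Omega(U-\Bar{U})\Bar{U}$. That argument used only the Fr\'echet differentiability of $\mathcal{S}:\mathcal{U}\to\mathcal{Z}$ from Theorem \ref{theorem3.4}, together with the embeddings $L^2(0,T;H^2)\hookrightarrow L^2(0,T;C(\Bar\Omega))$ and $\textbf{w}\in C([0,T];\mathbb{G}_{div})$.

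For $\mathcal{J}^*$ we use the identity $a^2-b^2=(a+b)(a-b)$ to write
\begin{equation*}
\frac{\mathcal{J}^*(\varphi_\beta)-\mathcal{J}^*(\Bar\varphi)}{\beta}=\frac12\sum_{i=1}^k\big(\varphi_\beta(x_i,T)+\Bar\varphi(x_i,T)-2\Phi^i(T)\big)\frac{\varphi_\beta(x_i,T)-\Bar\varphi(x_i,T)}{\beta}.
\end{equation*}
Two facts are needed to pass to the limit here.

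First, $\varphi_\beta(x_i,T)\to\Bar\varphi(x_i,T)$. By \eqref{cd3}, $\varphi_\beta\to\Bar\varphi$ in $L^\infty(0,T;H^2)$. Combined with the $H^1(0,T;V')$ bounds from \eqref{cd2}, this gives convergence in $C([0,T];H^2)$ by interpolation (Lions--Magenes). Since $H^2\hookrightarrow C(\Bar\Omega)$ in two dimensions, we obtain convergence in $C(\Bar\Omega\times[0,T])$, so point evaluation at $(x_i,T)$ is legitimate.

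Second, $(\varphi_\beta-\Bar\varphi)/\beta\to\psi_{U-\Bar U}$ pointwise at $(x_i,T)$. This is the main obstacle, and it is where Theorem \ref{prop4.3} is needed rather than Theorem \ref{theorem3.4}. Theorem \ref{prop4.3} gives Fr\'echet differentiability of $\mathcal{S}:\mathcal{U}\to\mathcal{V}$, with remainder $\rho_\beta=\varphi_\beta-\Bar\varphi-\beta\psi_{U-\Bar U}$ satisfying $\|\rho_\beta\|_{L^\infty(0,T;H^2)\cap H^1(0,T;V')}\le C\beta^2\|U-\Bar U\|_{\mathcal U}^2$. We need this bound in $C([0,T];C(\Bar\Omega))$, not merely in $L^\infty$ in time, so that the value at $t=T$ is controlled. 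The closing remark of Theorem \ref{prop4.3} states $\rho\in C(\Bar\Omega\times[0,T])$. Quantitatively, I would again use $L^\infty(0,T;H^2)\cap H^1(0,T;V')\hookrightarrow C([0,T];H^{2-\delta})\hookrightarrow C(\Bar\Omega\times[0,T])$, which is continuous, so $|\rho_\beta(x_i,T)|\le C\|\rho_\beta\|_{\mathcal V}=o(\beta)$. By Theorem \ref{theorem3.3} and the remark following it, $\psi_{U-\Bar U}\in C([0,T];C^{0,\gamma}(\Bar\Omega))$, so $\psi_{U-\Bar U}(x_i,T)$ is well defined. Therefore $(\varphi_\beta-\Bar\varphi)(x_i,T)/\beta\to\psi_{U-\Bar U}(x_i,T)$.

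Combining the two facts, the $\mathcal{J}^*$ quotient tends to $\sum_{i}(\Bar\varphi(x_i,T)-\Phi^i(T))\psi_{U-\Bar U}(x_i,T)$. Adding the $\mathcal{J}_1$ limit gives $j'(\Bar U)(U-\Bar U)$ as the left-hand side of \eqref{foc2}, and nonnegativity of the difference quotient yields the inequality.
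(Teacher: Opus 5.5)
Your proposal is correct and follows essentially the same route as the paper: split off the terminal pointwise term, use $a^2-b^2$, and pass to the limit at $(x_i,T)$ using the $\mathcal{V}$-differentiability of Theorem \ref{prop4.3} together with the embedding $L^\infty(0,T;H^2)\cap H^1(0,T;V')\hookrightarrow C(\Bar{\Omega}\times[0,T])$. One small slip: interpolation gives convergence in $C([0,T];H^{2-\delta})$, as you correctly use later, or $C([0,T];H^{3/2})$ via Lions--Magenes with $L^2(0,T;H^4)$. It does not give $C([0,T];H^2)$, but either weaker space still embeds in $C(\Bar{\Omega})$ in two dimensions, so the argument is unaffected.
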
 

\begin{proof} 
Observe that $\mathcal{J}_2(\varphi, \textbf{u}, U) = \mathcal{J}_1(\varphi, \textbf{u}, U) + \frac{1}{2}\sum\limits_{i=1}^{k}(\varphi(x_i,T)-\Phi^i(T))^2 $. For $\mathcal{J}_1$ we have estimated the variation in theorem \ref{theorem2.5}. We need to estimate only the variation of the second pointwise tracking term. 

We denote $j(U) = \mathcal{J}_2(\mathcal{S}(U), U)$, $\mathcal{S}(\Bar{U}) = (\Bar{\varphi}, \Bar{\textbf{u}})$ and $\mathcal{S}(\Bar{U} + \beta U) = (\varphi_\beta, \textbf{u}_\beta)$. Observe that, 

\begin{align*}
    \frac{j(\Bar{U}+\beta U)- j(\Bar{U})}{\beta} &= \frac{1}{2\beta}\int\limits_{0}^{T}\sum\limits_{i=1}^{k}(\varphi_\beta(x_i,t)-\Phi^i(t))^2 - (\Bar{\varphi}(x_i,t)-\Phi^i(t))^2 
    \\& +\frac{1}{2\beta}\int\limits_{0}^{T}\int_\Omega|\textbf{u}_\beta-\textbf{u}_d|^2-|\Bar{\textbf{u}}-\textbf{u}_d|^2 +\frac{1}{2\beta}\int_\Omega|\textbf{u}_\beta(x,T)-\textbf{u}_d(T)|^2-|\Bar{\textbf{u}}(T)-\textbf{u}_d(T)|^2 \\&+ \frac{1}{2\beta}\int\limits_{0}^{T}\int_\Omega(\Bar{U}+\beta U)^2-(\Bar{U})^2 + \frac{1}{2\beta}\sum\limits_{i=1}^{k}(\varphi_\beta(x_i,T)-\Phi^i(T))^2- (\Bar{\varphi}(x_i,T)-\Phi^i(T))^2
\end{align*}
Observe,
\begin{align*}
    \frac{1}{2\beta}\sum\limits_{i=1}^{k}(\varphi_\beta(x_i,T)-\Phi^i(T))^2- (\Bar{\varphi}(x_i,T)-\Phi^i(T))^2 = \frac{1}{2}\sum\limits_{i=1}^{k}(\varphi_\beta(x_i,T)+ \Bar{\varphi}(x_i,T)-2\Phi^i(T)) \frac{(\varphi_\beta(x_i,T)-\Bar{\varphi}(x_i,T))}{\beta}
\end{align*}

Using the continuous dependence estimates, \eqref{cd2} and \eqref{cd3} we have,  as $\beta \rightarrow 0$, $\varphi_\beta \rightarrow \Bar{\varphi}$ in $L^\infty(0,T;H^2(\Omega)) \cap H^1(0,T;V')\hookrightarrow  C(\Bar{\Omega} \times [0,T])$. Furthermore, by theorem \ref{prop4.3} and the regularity results proved for the linearised variable in theorem \ref{theorem3.3}, we have,  $\frac{(\varphi_\beta-\Bar{\varphi})}{\beta} \rightarrow \psi_U$ in  $C(\Bar{\Omega}\times [0,T])$, where $(\psi_{U}, \textbf{w}_{U})$ denotes the solution of the linearized system around $\Bar{U}$, corresponding to the source term $U$. 
Therefore,  combining the convergence above and convergences in terms of $\mathcal{J}_1$,   the limit $\beta\rightarrow 0$, is given by
\begin{align*}
   j'(\Bar{U})(U) &=\int\limits_{0}^{T}\sum\limits_{i=1}^{k}(\Bar{\varphi}(x_i,t)- \Phi^i(t))\psi_{U}(x_i,t)dt +\int\limits_{0}^{T}\int_\Omega(\Bar{\textbf{u}}-\textbf{u}_d)\cdot\textbf{w}_{U}dxdt + \int_\Omega (\Bar{\textbf{u}}(x,T) - \textbf{u}_d(T))\cdot \textbf{w}_{U}(T)dx \\&+ \int\limits_{0}^{T}\int_\Omega U\Bar{U} dxdt + \sum\limits_{i=1}^{k}(\Bar{\varphi}(x_i,T)- \Phi^i(T)) \psi_{U}(x_i,T). 
\end{align*}
This expression along with \eqref{eq2.42} gives the result. 
\end{proof}

\subsection{Characterisation of an optimal control}
In this section we will introduce an adjoint system and discuss its well-posedness. Our idea is to replace the linearised variable in the optimality condition \eqref{foc} by the adjoint variable and characterise the optimal control. 
\subsubsection{The Adjoint system}
Let $\bar{U}$ be an optimal control and $(\bar{\varphi}, \bar{\textbf{u}})$ be the associated optimal state. Consider the following adjoint system.
\begin{align}
    -\xi'-\Bar{\textbf{u}}\cdot\nabla\xi + 2\eta'(\Bar{\varphi})D\Bar{\textbf{u}}:D\textbf{v}+\Delta(m(\Bar{\varphi})(\Delta\xi)) -m(\Bar{\varphi})F''(\Bar{\varphi})\Delta\xi & \nonumber\\+ \Delta(m'(\Bar{\varphi})\nabla\Bar{\varphi}\cdot\nabla\xi) - m'(\Bar{\varphi})\nabla(\Delta\Bar{\varphi})\cdot\nabla\xi -\nabla\cdot(\textbf{v}\Delta\Bar{\varphi})+ \Delta(\textbf{v}\cdot\nabla\Bar{\varphi}) &= R, &\text{ in } Q, \label{eq4.6}\\
    -\textbf{v}'-(\Bar{\textbf{u}}\cdot\nabla)\textbf{v} + (\textbf{v}\cdot\nabla)\Bar{\textbf{u}} + \nabla q -\nabla\cdot(2\eta(\Bar{\varphi})D\textbf{v}) + \xi\nabla\Bar{\varphi} &= (\Bar{\textbf{u}}-\textbf{u}_d), &\text{ in } Q,\label{eq4.7}\\
    \nabla\cdot \textbf{v} &= 0, &\text{ in } Q,\label{eq4.8}\\
      \textbf{v}&= \textbf{0}, &\text{ on } \Sigma,\label{eq4.9}\\
    \frac{\partial\xi}{\partial\textbf{n}} = \frac{\partial\Delta\xi}{\partial\textbf{n}} &= 0, &\text{ on } \Sigma,\label{eq4.10}\\
    \xi(x,T)= \xi_{T} , \hspace{.5cm} \textbf{v}(x,T) &= (\Bar{\textbf{u}}(T)-\textbf{u}_d(T)), &\text{ in } \Omega.\label{eq4.11}
\end{align}
Here, $R := \sum\limits_{i=1}^{k}(\Bar{\varphi}(x_i,t)-\Phi^i(t))$ and $\xi_{T} := \sum\limits_{i=1}^{k}(\Bar{\varphi}(x_i,T)-\Phi^i(T))$. Note that $(\Phi^i, \textbf{u}_d)$ denotes the desired state as defined in the cost $\mathcal{J}_2$ given by \eqref{J2}.
Now we define the solution of \eqref{eq4.6}-\eqref{eq4.11} using the transposition method due to the low regularity of $R$ and $\xi_T$.  
\begin{definition}[Solution of the adjoint system by transposition]\label{def4.1}
We say $(\xi, \textbf{v}) \in L^2(0,T;H) \times L^2(0,T; \mathbb{G}_{div})$ solve the system \eqref{eq4.6}-\eqref{eq4.11} for $R = \sum\limits_{i=1}^{k}(\Bar{\varphi}(x_i,t)-\Phi^i(t))$ and $\xi_{T} = \sum\limits_{i=1}^{k}(\Bar{\varphi}(x_i,T)-\Phi^i(T))$, if it satisfies,
\begin{align}\label{eq4.12}
    \int\limits_{0}^{T}\int_\Omega \xi U &= \int\limits_{0}^{T}\sum\limits_{i=1}^{k}(\Bar{\varphi}(x_i,t)-\Phi^i(t))\psi(x_i, t) dt + \int\limits_{0}^{T}\int_\Omega (\Bar{\textbf{u}}-\textbf{u}_d)\cdot\textbf{w}dxdt + \sum\limits_{i=1}^{k}(\Bar{\varphi}(x_i,T)-\Phi^i(T))\psi(x_i, T) \nonumber\\
    & + \int_\Omega (\Bar{\textbf{u}}(x,T)-\textbf{u}_d(x,T))\cdot\textbf{w}(x,T)dx,
\end{align}
for every $U\in L^2(0,T;H)$, where $(\psi, \textbf{w})$ is the weak  solution of the system \eqref{eq2.3}-\eqref{eq2.8} corresponding to the source term $U$ and initial data $(\psi_0, \textbf{w}_0) = (0,\textbf{0})$. 
\end{definition}

 We prove the existence result  using similar arguments as in the proof of theorem \ref{theorem3.6}. We approximate the source term $R = \sum\limits_{i=1}^{k}(\Bar{\varphi}(x_i,t)-\Phi^i(t))$ by functions $R_\epsilon$ in $L^2(0, T; H)$ and the terminal data $\xi_T$ by $\xi_{T \epsilon} \in H$  and passing to the limit to obtain a solution of the system.

\begin{theorem}[Existence of solution to the adjoint system]
Let assumptions \emph{\textbf{[A3]}}-\emph{\textbf{[A5]}} are satisfied. Then there exists a solution to the system \eqref{eq4.6}-\eqref{eq4.11} in the sense of definition \ref{def4.1}, where $R = \sum\limits_{i=1}^{k}(\Bar{\varphi}(x_i,t)-\Phi^i(t))$ and $\xi_{T} = \sum\limits_{i=1}^{k}(\Bar{\varphi}(x_i,T)-\Phi^i(T))$ .
Further, the uniqueness of the weak solution follows as the adjoint system is linear.
\end{theorem}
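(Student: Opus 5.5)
Following the strategy of Theorem \ref{theorem3.6}, we regularise both the distributed source and the terminal datum, solve the regularised adjoint problems with Lemma \ref{lemma2.6}, derive bounds independent of $\epsilon$ by duality with the linearised system, and pass to the limit in the duality identity.

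\textbf{Step 1: regularised adjoint problems.} Choose $\epsilon>0$ small enough that the balls $B(x_i,\epsilon)$ are pairwise disjoint and contained in $\Omega$. Define
$$R_\epsilon:=\sum_{i=1}^{k}\frac{1}{|B(x_i,\epsilon)|}\chi_{B(x_i,\epsilon)}(\Bar{\varphi}(x,t)-\Phi^i(t)),\qquad \xi_{T\epsilon}:=\sum_{i=1}^{k}\frac{1}{|B(x_i,\epsilon)|}\chi_{B(x_i,\epsilon)}(\Bar{\varphi}(x,T)-\Phi^i(T)).$$
Then $R_\epsilon\in L^2(0,T;H)$ and $\xi_{T\epsilon}\in H$, since $\Bar{\varphi}\in C(\Bar{\Omega}\times[0,T])$. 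Lemma \ref{lemma2.6} therefore gives a weak solution $(\xi_\epsilon,\textbf{v}_\epsilon)$ of \eqref{eq4.6}-\eqref{eq4.11} with data $(R_\epsilon,\xi_{T\epsilon})$.

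\textbf{Step 2: the regularised duality identity.} Fix $U\in L^2(0,T;H)$ and let $(\psi,\textbf{w})$ solve the linearised system \eqref{eq2.3}-\eqref{eq2.8} with source $U$ and zero initial data. Test \eqref{eq2.51} with $\psi$ and \eqref{eq2.52} with $\textbf{w}$, add, integrate over $(0,T)$ and integrate by parts in time and space, exactly as in the proof of Theorem \ref{theorem3.6}. Compared with \eqref{eq2.54}, the only new boundary term in time is $(\xi_\epsilon(T),\psi(T))=(\xi_{T\epsilon},\psi(T))$; the term at $t=0$ vanishes because $\psi(0)=0$. Hence
$$\int_0^T(\xi_\epsilon,U)\,dt=\int_0^T(R_\epsilon,\psi)\,dt+\int_0^T(\Bar{\textbf{u}}-\textbf{u}_d,\textbf{w})\,dt+(\xi_{T\epsilon},\psi(T))+(\Bar{\textbf{u}}(T)-\textbf{u}_d(T),\textbf{w}(T)).$$

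\textbf{Step 3: bounds independent of $\epsilon$ (main obstacle).} This is the step I expect to be hardest. The terminal term requires $|(\xi_{T\epsilon},\psi(T))|\le \sum_i\|\Bar{\varphi}(T)-\Phi^i(T)\|_{L^\infty}\|\psi(T)\|_{L^\infty}$ together with a bound of $\|\psi(T)\|_{L^\infty}$ by $\|U\|_{L^2(0,T;H)}$. The weak theory of Theorem \ref{theorem2} only gives $\psi\in L^2(0,T;C(\Bar\Omega))$, so I would use the strong estimate \eqref{eq3.28} of Theorem \ref{theorem3.3}. Its Gronwall argument is linear in the data, so with $\psi_0=0$, $\textbf{w}_0=0$ it yields $\sup_t\|\psi(t)\|_{H^2}\le C\|U\|_{L^2(0,T;H)}$; the constant $1$ in \eqref{eq3.28} should not be present for the homogeneous linear problem, and I would check this. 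Agmon's inequality then gives $\sup_t\|\psi(t)\|_{L^\infty}\le C\|U\|_{L^2(0,T;H)}$. The continuity $\psi\in C([0,T];C^{0,\gamma}(\Bar\Omega))$, which follows from the remark after Theorem \ref{theorem3.3}, makes $\psi(x_i,T)$ meaningful. The remaining terms are bounded exactly as in Theorem \ref{theorem3.6}. Taking the supremum over $U$ gives $\|\xi_\epsilon\|_{L^2(0,T;H)}\le C$ uniformly in $\epsilon$. The energy estimate \eqref{eq2.55}, obtained by backward Gronwall for the velocity equation, then bounds $\textbf{v}_\epsilon$ uniformly in $L^\infty(0,T;\mathbb{G}_{div})\cap L^2(0,T;\mathbb{V}_{div})$, since its right-hand side involves only $\xi_\epsilon$ and the data.

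\textbf{Step 4: passage to the limit.} Extract subsequences with $\xi_\epsilon\rightharpoonup\xi$ in $L^2(0,T;H)$ and $\textbf{v}_\epsilon\rightharpoonup\textbf{v}$. Continuity of $\Bar\varphi$ and $\psi$ in space gives
$$\frac{1}{|B(x_i,\epsilon)|}\int_{B(x_i,\epsilon)}(\Bar\varphi-\Phi^i)\psi\to(\Bar\varphi(x_i,t)-\Phi^i(t))\psi(x_i,t)$$
for a.e. $t$, with an $L^1(0,T)$ dominating bound, and the same convergence holds at $t=T$ by uniform continuity. Passing to the limit in the identity of Step 2 gives \eqref{eq4.12}. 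Uniqueness follows from linearity: the difference of two transposition solutions satisfies $\int_0^T\int_\Omega(\xi_1-\xi_2)U=0$ for every $U\in L^2(0,T;H)$, so $\xi_1=\xi_2$. The velocity component is then determined through \eqref{eq2.52}.
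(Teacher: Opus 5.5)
Your proposal is correct and follows essentially the same route as the paper. Both regularise $R$ and $\xi_T$ by ball averages and solve the regularised problems via Lemma \ref{lemma2.6}. Both then use the duality identity \eqref{eq4.13} with the linearised system, bound $\xi_\epsilon$ uniformly in $L^2(0,T;H)$ by duality and $\textbf{v}_\epsilon$ by backward Gronwall, and pass to the limit in the averages at $x_i$.

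You control the terminal term $(\xi_{T\epsilon},\psi(T))$ explicitly through the homogeneous strong bound $\sup_t\|\psi(t)\|_{H^2}\le C\|U\|_{L^2(0,T;H)}$ from Theorem \ref{theorem3.3}. This is exactly what the paper's argument tacitly requires: it cites only the weak estimates \eqref{eq2.18} and \eqref{eq3.25}, while appealing to $C([0,T];H^2)$-type regularity of $\psi$. Your version is therefore, if anything, the more careful one.
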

\begin{proof}
    The proof is based on an approximation technique. Let 
    $$R_\epsilon := \sum\limits_{i=1}^{k}\frac{1}{|B(x_i,\epsilon)|}\chi_{B(x_i,\epsilon)}(\Bar{\varphi}(x,t)-\Phi^i(t)) \; \; and  \;\; \xi_{T\epsilon}= \sum\limits_{i=1}^{k}\frac{1}{|B(x_i,\epsilon)|}\chi_{B(x_i,\epsilon)}(\Bar{\varphi}(x,T)-\Phi^i(T)).$$ We choose  $\epsilon$ sufficiently small so that the balls $\{ B(x_i, \epsilon)\}_i, 1\leq i\leq k$ do not overlap. Observe that $R_\epsilon\in L^2(0,T;H)$ and $\xi_{T\epsilon}\in H$, therefore, by Lemma \ref{lemma2.6}, there exists a weak solution to the adjoint system corresponding to $(R_\epsilon, \xi_{T\epsilon})$, say $(\xi_\epsilon, \textbf{v}_\epsilon)$.

    Let $U\in \mathcal{U}$ and $(\psi, \textbf{w})$ be the solution of the linearized system \eqref{eq2.3}-\eqref{eq2.8} corresponding to the source term $U-\Bar{U}$ and the initial data, $(\psi_0, \textbf{w}_0) = (0, \textbf{0})$. For each fixed $t$,  $\psi(t) \in V$ and $\textbf{w}(t) \in \mathbb{V}_{div} $. We test \eqref{eq4.6}, \eqref{eq4.7}, with $\psi$, $\textbf{w}$ respectively. Adding both the equations we get,
\begin{align*}
    -(\xi_{\epsilon}',\psi) -(\Bar{\textbf{u}}\cdot\nabla\xi_{\epsilon},\psi) + 2(\eta'(\Bar{\varphi})D\Bar{\textbf{u}}:D\textbf{v}_{\epsilon}, \psi)+ (\Delta(m(\Bar{\varphi})\Delta\xi_{\epsilon}),\psi) -(m(\Bar{\varphi})F''(\Bar{\varphi})\Delta\xi_{\epsilon},\psi)& \nonumber\\+ (\Delta(m'(\Bar{\varphi})\nabla\Bar{\varphi}\cdot\nabla\xi_{\epsilon}),\psi) - (m'(\Bar{\varphi})\nabla(\Delta\Bar{\varphi})\cdot\nabla\xi_{\epsilon},\psi) -(\nabla\cdot(\textbf{v}_{\epsilon}\Delta\Bar{\varphi}),\psi) + (\Delta(\textbf{v}_{\epsilon}\cdot\nabla\Bar{\varphi}),\psi)& \\
    -(\textbf{v}_{\epsilon}', \textbf{w})-((\Bar{\textbf{u}}\cdot\nabla)\textbf{v}_{\epsilon},\textbf{w}) + ((\textbf{v}_{\epsilon}\cdot\nabla)\Bar{\textbf{u}},\textbf{w}) + 2(\eta(\Bar{\varphi})D\textbf{v}_{\epsilon},D\textbf{w}) + (\xi_{\epsilon}\nabla\Bar{\varphi},\textbf{w}) &= (R_\epsilon, \psi)+ (\Bar{\textbf{u}}-\textbf{u}_d, \textbf{w})
\end{align*}
Applying integration by parts and rearranging, we obtain,
\begin{align*}
    (\xi_{\epsilon}&,\psi')- \int_\Omega\xi_{T\epsilon}\psi(T) + (\xi_{\epsilon},\Bar{\textbf{u}}\cdot\nabla\psi) + (\textbf{v}_{\epsilon}, \nabla\cdot(2\eta'(\Bar{\varphi})\psi D\Bar{\textbf{u}}))+ (\xi_{\epsilon},\Delta(m(\Bar{\varphi})\Delta\psi)) -(\xi_{\epsilon},\Delta(m(\Bar{\varphi})F''(\Bar{\varphi})\psi)) \nonumber\\&- (\xi_{\epsilon},\nabla\cdot(m'(\Bar{\varphi})\nabla\Bar{\varphi}\Delta\psi)) + (\xi_{\epsilon},\nabla\cdot(m'(\Bar{\varphi})\nabla(\Delta\Bar{\varphi})\psi)) +(\textbf{v}_{\epsilon},(\Delta\Bar{\varphi})\nabla\psi) + (\textbf{v}_{\epsilon},(\Delta\psi)\nabla\Bar{\varphi}) 
    +(\textbf{v}_{\epsilon}, \textbf{w}')\\ &- \int_\Omega (\Bar{\textbf{u}}(T)-\textbf{u}_d(T))\textbf{w}(T)dx+ (\textbf{v}_{\epsilon}, (\Bar{\textbf{u}}\cdot\nabla)\textbf{w}) + (\textbf{v}_{\epsilon},(\textbf{w}\cdot\nabla)\Bar{\textbf{u}}) - (\textbf{v}_{\epsilon},\nabla\cdot(2\eta(\Bar{\varphi})D\textbf{w})) + (\xi_{\epsilon},\textbf{w}\cdot\nabla\Bar{\varphi}) \\&= (R_\epsilon, \psi)+ (\Bar{\textbf{u}}-\textbf{u}_d, \textbf{w})
\end{align*}
After rearranging and applying Equations \eqref{eq2.3} and \eqref{eq2.4}, we get,
\begin{align*}
   &\Big(\xi_{\epsilon},\psi' + \Bar{\textbf{u}}\cdot\nabla\psi + \textbf{w}\cdot\nabla\Bar{\varphi}+ \Delta\big(m(\Bar{\varphi})(\Delta\psi - F''(\Bar{\varphi})\psi)\big) - \nabla\cdot(m'(\Bar{\varphi})\nabla\Bar{\varphi}\Delta\psi) + \nabla\cdot(m'(\Bar{\varphi})\nabla(\Delta\Bar{\varphi})\psi) \Big) \\
     & \hspace{.5cm}+ \big(\textbf{v}_{\epsilon}, \textbf{w}'+ (\Bar{\textbf{u}}\cdot\nabla)\textbf{w}+ (\textbf{w}\cdot\nabla)\Bar{\textbf{u}} - \nabla\cdot(2\eta(\Bar{\varphi})D\textbf{w}) - \nabla\cdot(2\eta'(\Bar{\varphi})\psi D\Bar{\textbf{u}}) + (\Delta\Bar{\varphi})\nabla\psi + (\Delta\psi)\nabla\Bar{\varphi} \big)  \\ 
     &= \big(\xi_{\epsilon}, U-\bar{U} \big) \\
     &=(R_\epsilon, \psi)+ (\Bar{\textbf{u}}-\textbf{u}_d,  \textbf{w}) + \int_\Omega\xi_{T\epsilon}\psi(T) + \int_\Omega (\Bar{\textbf{u}}(T)-\textbf{u}_d(T))\textbf{w}(T)dx
\end{align*}
It follows that,
\begin{align}\label{eq4.13}
     (\xi_\epsilon, U-\bar{U})= (R_\epsilon, \psi)+ (\Bar{\textbf{u}}-\textbf{u}_d,  \textbf{w}) + \int_\Omega\xi_{T\epsilon}\psi(T) + \int_\Omega (\Bar{\textbf{u}}(T)-\textbf{u}_d(T))\textbf{w}(T)dx
\end{align}
Indeed,
\begin{align*}
     \Big|\int\limits_{0}^{T}(\xi_\epsilon, U-\bar{U}) dt \Big| &\leq  \int\limits_{0}^{T}|(R_\epsilon ,\psi)|dt + \int\limits_{0}^{T}|(\Bar{\textbf{u}}-\textbf{u}_d, \textbf{w})| dt + |(\xi_{T\epsilon}, \psi(T))| + |(\Bar{\textbf{u}}(T)-\textbf{u}_d(T), \textbf{w}(T))|
     \\&\leq \sum\limits_{i=1}^{k}\frac{1}{|B(x_i,\epsilon)|}\int\limits_{0}^{T}\int_\Omega|\chi_{B(x_i,\epsilon)}(\Bar{\varphi}(x,t)-\Phi^i(t))\psi|dxdt + \int\limits_{0}^{T}\|\Bar{\textbf{u}}-\textbf{u}_d\|\|\textbf{w}\| \\&\hspace{.5cm}+ \sum\limits_{i=1}^{k}\frac{1}{|B(x_i,\epsilon)|}\int_{\Omega}|\chi_{B(x_i,\epsilon)}(\Bar{\varphi}(x,T)-\Phi^i(T))\psi(x,T)|dx + \|\Bar{\textbf{u}}(x,T)-\textbf{u}_d(T)\|\|\textbf{w}(x,T)\|\\
     &\leq \sum\limits_{i=1}^{k}\int\limits_{0}^{T}\|\Bar{\varphi}(x,t)-\Phi^i(t)\|_{L^\infty(\Omega)}\|\psi\|_{L^\infty(\Omega)} + \int\limits_{0}^{T}\|\Bar{\textbf{u}}-\textbf{u}_d\|\|\textbf{w}\| \\&\hspace{.5cm}+ \sum\limits_{i=1}^{k}\|\Bar{\varphi}(x,T)-\Phi^i(T)\|_{L^\infty(\Omega)}\|\psi(x,T)\|_{L^\infty(\Omega)} + \|\Bar{\textbf{u}}(x,T)-\textbf{u}_d(T))\|_{L^2(\Omega)}\|\textbf{w}(x,T)\|_{L^2(\Omega)}
\end{align*}
The last inequality is obtained by combining the regularity results for $(\psi, \textbf{w})$ and the embedding, $C([0,T];H^2(\Omega))\hookrightarrow C(\Omega\times[0,T])$. Now using \eqref{eq2.18}, \eqref{eq3.25}, we have, 
\begin{align*}
     \int\limits_{0}^{T} |(\xi_\epsilon, U-\bar{U})| dt 
     &\leq C\Big( \sum\limits_{i=1}^{k}\int\limits_{0}^{T}\|\Bar{\varphi}(x,t)-\Phi^i(t)\|_{L^\infty(\Omega)}^2 + \int\limits_{0}^{T}\|\Bar{\textbf{u}}-\textbf{u}_d\|^2 \\&\hspace{.5cm}+ \sum\limits_{i=1}^{k}\|\Bar{\varphi}(x,T)-\Phi^i(T)\|_{L^\infty(\Omega)} + \|\Bar{\textbf{u}}(x,T)-\textbf{u}_d(T))\|_{L^2(\Omega)} \Big) \|U-\bar{U}\|_{L^2(0,T;H)}
\end{align*}
which holds for all $U\in L^2(0,T;V)$. This implies $\|\xi_{\epsilon}\|_{L^2(0,T;H)}\leq C$ , where $C$ is a positive constant independent of $\epsilon$.

Similarly we derive estimates for $\textbf{v}_\epsilon$, following the same methods as in theorem \ref{theorem3.6}. 
It follows that \\$\|\textbf{v}_\epsilon\|_{L^2(0,T;\mathbb{V}_{div})\cap L^\infty(0,T;\mathbb{G}_{div})} \leq C$, where $C$ is constant independent of $\epsilon$. Using the uniform estimates derived, we have, renamed subsequences, 
\begin{align*}
    \xi_\epsilon &\rightharpoonup \xi, \hspace{1cm}\text{in } L^2(0,T;H),\\
    \textbf{v}_\epsilon &\overset{\ast}{\rightharpoonup} \textbf{v}, \hspace{1cm}\text{in } L^\infty(0,T;\mathbb{G}_{div}),\\
    \textbf{v}_\epsilon &\rightharpoonup \textbf{v}, \hspace{1cm}\text{in } L^2(0,T;\mathbb{V}_{div}).
\end{align*}
Since $\Bar{\varphi}, \psi \in L^\infty(0,T; H^2(\Omega)) \hookrightarrow C(\Bar{\Omega}\times [0,T])$, we have, for each $i$, $1\leq i\leq k$, the following distributional convergences as $\epsilon \rightarrow 0$.  
\begin{align*}
    \frac{1}{|B(x_i,\epsilon)|}\int\limits_{0}^{T}\int_\Omega|\chi_{B(x_i,\epsilon)}(\Bar{\varphi}(x,t)-\Phi^i(t))\psi| &\rightarrow \int\limits_{0}^{T}(\Bar{\varphi}(x_i,t)-\Phi^i(t))\psi(x_i,t)dt, \\
    \frac{1}{|B(x_i,\epsilon)|}\int_{\Omega}\chi_{B(x_i,\epsilon)}(\Bar{\varphi}(x,T)-\Phi^i(T))\psi(x,T) &\rightarrow (\Bar{\varphi}(x_i,T)-\Phi^i(T))\psi(x_i,T)
\end{align*}
Therefore we can pass to the limit in \eqref{eq2.54} to get,
\begin{align}\label{eq4.15}
    \int\limits_{0}^{T}\int_\Omega \xi (U-\bar{U}) &= \int\limits_{0}^{T}\sum\limits_{i=1}^{k}(\Bar{\varphi}(x_i,t)-\Phi^i(t))\psi(x_i, t) dt + \int\limits_{0}^{T}\int_\Omega (\Bar{\textbf{u}}-\textbf{u}_d)\cdot\textbf{w}dxdt + \sum\limits_{i=1}^{k}(\Bar{\varphi}(x_i,T)-\Phi^i(T))\psi(x_i, T) \nonumber\\
    & + \int_\Omega (\Bar{\textbf{u}}(x,T)-\textbf{u}_d(x,T))\cdot\textbf{w}(x,T)dx,
\end{align}
hold true for all $U\in L^2(0,T;V)$. Hence the existence of a solution of \eqref{eq4.6}-\eqref{eq4.11} in the sense of definition \ref{def2}. Moreover, the uniqueness of the solution follows due to linearity of the system.
\end{proof}

\begin{theorem}\label{theorem4.6}
Let $\Bar{U}$ be an optimal control and $(\Bar{\varphi}, \Bar{\textbf{u}})$ be the associated state. Further, $(\xi, \textbf{v})$ be the adjoint variable as defined in definition \ref{def4.1} corresponding to $(U-\Bar{U})$. Then the following variational inequality holds true. 
\begin{align} \label{choc2} 
\int\limits_{0}^{T} \int_\Omega (\xi+ \Bar{U})(U - \Bar{U}) \geq 0, \hspace{.5cm} \forall U \in \mathcal{U}_{ad}.
\end{align}  
\end{theorem}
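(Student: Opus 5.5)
The plan is to combine the first order condition of Theorem \ref{theorem4.4} with the transposition identity \eqref{eq4.15} established in the existence theorem for the adjoint system \eqref{eq4.6}-\eqref{eq4.11}. This mirrors the argument behind Theorem \ref{theorem3.8}.

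Fix $U\in\mathcal{U}_{ad}$ and set $h=U-\Bar{U}$. Since $\mathcal{U}_{ad}$ is convex and bounded in $L^2(0,T;V\cap L^\infty(\Omega))$, we have $h\in\mathcal{U}$. Let $(\psi,\textbf{w})=(\psi_{U-\Bar{U}},\textbf{w}_{U-\Bar{U}})$ be the solution of the linearised system \eqref{eq2.3}-\eqref{eq2.8} with source $h$ and zero initial data. By Theorem \ref{theorem3.3} and the subsequent remark, $\psi\in L^\infty(0,T;H^2)\cap C([0,T];C^{0,\gamma}(\Bar\Omega))$ and $\textbf{w}\in C([0,T];\mathbb{G}_{div})$. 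Consequently the point values $\psi(x_i,t)$, $\psi(x_i,T)$ and the trace $\textbf{w}(T)$ in \eqref{foc2} are meaningful. They coincide with the quantities appearing in Definition \ref{def4.1} for this source term.

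Next we apply \eqref{eq4.15}, which is exactly \eqref{eq4.12} with source $U-\Bar{U}$. This identifies the sum of the first four terms on the left of \eqref{foc2} with $\int_0^T\int_\Omega \xi\,(U-\Bar{U})\,dx\,dt$. Substituting this into \eqref{foc2} gives
\begin{equation*}
\int_0^T\int_\Omega \xi(U-\Bar{U})\,dx\,dt+\int_0^T\int_\Omega \Bar{U}(U-\Bar{U})\,dx\,dt\geq 0,
\end{equation*}
which is \eqref{choc2}.

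The only delicate point is consistency between the objects involved. The adjoint $\xi$ is obtained as a weak $L^2(0,T;H)$ limit of $\xi_\epsilon$. We must check that this limit is independent of the chosen direction $U$, so that a single $\xi$ works for all $U\in\mathcal{U}_{ad}$. This follows from the uniqueness of the transposition solution: two solutions would have $\int_0^T\int_\Omega(\xi_1-\xi_2)\,U=0$ for all $U$ in a dense subset of $L^2(0,T;H)$, hence $\xi_1=\xi_2$.

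We must also check that the passage $\epsilon\to0$ in the terminal term is justified. This uses the continuity of $\psi(\cdot,T)$ near each $x_i$, which comes from the $C([0,T];C^{0,\gamma})$ regularity. Once both points are in place, the conclusion is a direct substitution.
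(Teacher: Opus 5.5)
Your proposal is correct and follows the paper's own argument. The paper simply substitutes the transposition identity \eqref{eq4.12}, in the form \eqref{eq4.15} with source $U-\Bar{U}$ and zero initial data, into the first-order condition \eqref{foc2}, which is exactly your substitution step. Your remarks on the independence of $\xi$ from the direction $U$ (via uniqueness of the transposition solution) and on the terminal-term limit are sound additions that the paper leaves implicit. One small correction: $U-\Bar{U}\in\mathcal{U}$ holds simply because $\mathcal{U}$ is a linear space, not because $\mathcal{U}_{ad}$ is bounded (the box constraints do not bound the $L^2(0,T;V)$ norm).
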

\begin{proof}
    This result follows directly by comparing the optimality condition, \eqref{foc2} and \eqref{eq4.12}. 
\end{proof}

\section{Local CHNS system with a singular potential}

In most of the real life problems, the potential appears to be singular. Motivated by this, we study the \textbf{[OCP\RomanNumeralCaps{1}]} for system \eqref{eq8}–\eqref{eq14} under a singular potential. The mathematical analysis of this setting has been developed in several works. In \cite{AMT}, the basic well-posedness theory was established. Later, \cite{GMT, JHW} proved the separation property and obtained higher regularity of solutions. These results will be central to our discussion. More recently, the model was extended to include chemotaxis effects in \cite{JWH}.

Consider the following set of assumptions from \cite{ JNG, GMT, JHW}.

\begin{enumerate}[font={\bfseries},label={A\arabic*.}]
\item[{[H1]}] The potential, $F \in C([-1,1]) \cap C^3(-1,1)$ and it can be expressed as
    \begin{align*}
        F(r) = F_1(r) - \frac{\theta_0}{2}r^2,
     \end{align*}
  such that,
  \begin{align*}
    \lim_{r\rightarrow \pm 1} F_1'(r)= \infty ; \hspace{.5cm} F_1''(r) \geq \alpha_0, \,\,r\in(-1,1).
  \end{align*}
  where $\theta_0, \alpha_0 \in \mathbb{R}$ and $\alpha_0 >0$. Furthermore, There exists  $\epsilon_0 > 0$ such that $F_1''$ is non-decreasing in $[1-\epsilon_0,1)$ and non-increasing in $(-1,-1+\epsilon_0].$
 \item[{[H2]}] The convex potential $F_1$ satisfies the following growth condition on its derivatives,
    \begin{align}
        F_1''(r) \leq Ce^{C|F'(r)|},\,\, \forall r \in (-1,1), 
    \end{align}
    for some $ C >0$.
\end{enumerate}

Observe that the logarithmic potential defined by,
\begin{equation}\label{eq09}
F_{\log}(r)=\frac{\theta}{2}((1+r) \log(1+r)+(1-r) \log(1-r))+\frac{\theta_c}{2}\left(1-r^2\right),\quad \text{ for } r \in(-1,1).
\end{equation}
satisfies all above assumptions. We have the strong wellposedness result for the local system with singular potential \cite[Theorem 2.1]{JNG}.


The argument in \cite{JNG} is based on an approximation technique. The singular potential $F$ is approximated by a family of regular functions $F_\epsilon$. For the corresponding approximated solutions $(\varphi_\epsilon, \mathbf{u}_\epsilon)$, $\epsilon$-uniform estimates can be derived. To obtain higher regularity of $\varphi$, the authors employ an elliptic estimate for $\varphi$, which follows from \eqref{eq9} and \eqref{eq12}. The separation property for $\varphi$ allows control over the terms $F'(\varphi)$ and $F''(\varphi)$. Since $\varphi$ remains confined to a compact interval $[-1+\delta, 1-\delta], \,\forall t\geq 0$ for some $\delta>0$, both $F'(\varphi)$ and $F''(\varphi)$ attain sufficient regularity. This effectively reduces the singular potential case to the regular case, and the remaining analysis proceeds analogously to that in Section 3. 
The regularity of $(\varphi, \textbf{u})$,
 \begin{align*}
       \varphi &\in L^\infty(0,T; H^2)\cap L^2(0,T; H^3) \cap H^1(0,T;H), \\
      \textbf{u} &\in L^\infty(0,T;\mathbb{V}_{div}) \cap H^1(0,T; \mathbb{G}_{div}),
   \end{align*}
from \cite[Theorem 2.1]{JNG} is sufficient to define \textbf{[OCP\RomanNumeralCaps{1}]}. Furthermore, the solution satisfy the following instantaneous separation property \cite[Theorem 4.4]{GMT}, \cite[Theorem 2.2]{JHW}. i.e, given $\tau> 0$, there exist $\delta >0$ such that 
\begin{equation}
    \sup_{t\geq \tau} \|\varphi(t)\|_{L^\infty(\Omega)} \leq 1-\delta.
\end{equation}
The separation property is crucial to study the pointwise tracking control problem. 
\begin{theorem}
Consider the system \eqref{eq8}–\eqref{eq14}. Let $\varphi_0 \in H^2(\Omega)$, $\textbf{u}_0 \in \mathbb{V}_{div}$, $U\in L^2(0,T; V\cap L^\infty(\Omega))$  \emph{\textbf{[A4]}, and let $F$ be a singular potential such that the assumptions } \textbf{[H1]}, \textbf{[H2]} are satisfied. Then the optimal control problem \emph{\textbf{[OCP\RomanNumeralCaps{1}]}} admits a solution. 
\end{theorem}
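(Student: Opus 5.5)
We will follow the direct method of the calculus of variations, as in the proof of Theorem \ref{theorem1}, replacing the regular-potential well-posedness results by the strong well-posedness theory for singular potentials \cite[Theorem 2.1]{JNG}.

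\emph{Step 1: infimum and minimising sequence.} By \cite[Theorem 2.1]{JNG}, for every $U\in\mathcal{U}_{ad}$ there is a unique strong solution $(\varphi,\textbf{u})$ with
\[
\varphi\in L^\infty(0,T;H^2)\cap L^2(0,T;H^3)\cap H^1(0,T;H),\qquad \textbf{u}\in L^\infty(0,T;\mathbb{V}_{div})\cap H^1(0,T;\mathbb{G}_{div}).
\]
Moreover $|\varphi|<1$ a.e., and $\varphi\in C(\bar\Omega\times[0,T])$ by the Aubin--Lions lemma, so $\mathcal{J}_1$ is well defined and nonnegative. Set $j=\inf_{U\in\mathcal{U}_{ad}}\mathcal{J}_1$ and take a minimising sequence $U_n$ with states $(\varphi_n,\textbf{u}_n)$. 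The bounds $U_1\le U_n\le U_2$ and the finiteness of $\mathcal{J}_1(\varphi_n,\textbf{u}_n,U_n)$ bound $U_n$ in $L^2(0,T;H)$. I would use the $L^2(0,T;V)$-boundedness assumed of $\mathcal{U}_{ad}$ in Theorem \ref{theorem1} to extract $U_n\rightharpoonup\bar U$ in $L^2(0,T;V)$ and weak-$*$ in $L^\infty(Q)$. Since $\mathcal{U}_{ad}$ is convex and closed, $\bar U\in\mathcal{U}_{ad}$.

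\emph{Step 2: uniform estimates and compactness.} This is the main step. I would check that the a priori bounds in \cite{JNG} depend on $U$ only through norms that stay bounded on $\mathcal{U}_{ad}$. These bounds come from the $\epsilon$-uniform estimates for the approximating regular potentials $F_\epsilon$, the elliptic estimate for $\varphi$ from \eqref{eq9}, \eqref{eq12}, and the bound on $\|F'(\varphi)\|_{L^p}$ furnished by [H1]--[H2].

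The mass source term needs particular care. Since $\frac{d}{dt}\int_\Omega\varphi=\int_\Omega U$, the spatial mean of $\varphi_n$ must stay strictly inside $(-1,1)$ uniformly in $n$; otherwise the estimates for $F'(\varphi)$ break down. If needed, I would add this as a compatibility requirement on $U_1,U_2$ and $\varphi_0$, as is implicit in \cite{JNG}.

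Granting this, $(\varphi_n,\textbf{u}_n)$ is bounded in the spaces displayed above. Aubin--Lions then gives, along a subsequence,
\[
\varphi_n\to\bar\varphi \text{ in } C([0,T];C(\bar\Omega))\cap L^2(0,T;H^2),\qquad \textbf{u}_n\to\bar{\textbf{u}} \text{ in } C([0,T];\mathbb{G}_{div})\cap L^2(0,T;\mathbb{V}_{div}).
\]

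\emph{Step 3: identifying the limit as the state of $\bar U$.} The linear terms pass to the limit weakly and the convective terms strongly. For the singular term, $F'(\varphi_n)$ is bounded in $L^2(Q)$. Strong convergence gives $\varphi_n\to\bar\varphi$ a.e. with $|\bar\varphi|<1$ a.e., so $F'(\varphi_n)\to F'(\bar\varphi)$ a.e.; together with the $L^2$ bound this yields weak convergence by a standard monotonicity/Egorov argument. The nonlinearities $m(\varphi_n)$ and $\eta(\varphi_n)$ converge uniformly. Hence $(\bar\varphi,\bar{\textbf{u}})$ is a strong solution for $\bar U$, and by uniqueness \cite{JNG} it equals $\mathcal{S}(\bar U)$.

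\emph{Step 4: lower semicontinuity.} The uniform convergence of $\varphi_n$ gives convergence of the pointwise term $\mathcal{J}_a$. The strong convergence of $\textbf{u}_n$ in $C([0,T];\mathbb{G}_{div})$ handles the velocity and terminal-velocity terms. Weak lower semicontinuity of $\|U\|_{L^2(Q)}^2$ handles the control cost. Therefore
\[
j\le\mathcal{J}_1(\bar\varphi,\bar{\textbf{u}},\bar U)\le\liminf_n\mathcal{J}_1(\varphi_n,\textbf{u}_n,U_n)=j,
\]
so $\bar U$ is optimal.

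The separation property is not needed for existence; it becomes relevant only for the later differentiability and adjoint analysis.
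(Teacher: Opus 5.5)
Your proposal is correct and follows the same route as the paper, whose proof simply defers to the direct-method argument of Theorem \ref{theorem1} combined with the strong well-posedness of \cite{JNG}. Your write-up is more careful than that sketch: you use uniform convergence for the pointwise term, identify the limit of $F'(\varphi_n)$ via a.e.\ convergence plus an $L^2$ bound, and invoke uniqueness to recover $\mathcal{S}(\bar U)$.

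The caveats you flag are real, and the paper leaves them implicit. First, the $L^2(0,T;V)$-bound on $\mathcal{U}_{ad}$ does not follow from the box constraints or the $L^2(Q)$ control cost, so it must be imposed explicitly. Second, the singular potential forces $|\varphi|<1$, while $\frac{d}{dt}\int_\Omega\varphi=\int_\Omega U$. A compatibility condition on $U_1$, $U_2$, $\varphi_0$ that keeps the mean in a compact subset of $(-1,1)$ is therefore genuinely necessary; without it, the state equation has no solution for some admissible controls. So your argument proves the statement with that hypothesis added, which the paper's statement omits.
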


 Sketch of the Proof: The existence of an optimal control follows by similar arguments as in theorem \ref{theorem1}. Due to the low regularity of the cost functional, $\mathcal{J}_1$, we study the adjoint system using the transposition method, which will require a higher regularity of the linearised variable. We need to obtain a strong solution of the linearised system. This can be done by setting a higher regularity assumption on $m$ and $F$ on $(-1,1)$ along with higher regularity of the initial data, $\psi_0$ and $\textbf{w}_0$. Thanks to the separation property, we can adopt the methods used for the regular potential to study the singular case. By repeating the techniques used for the regular case we can get the optimality conditions and a characterisation of an optimal control analogous to the theorem \ref{theorem2.5} and theorem \ref{theorem3.8}.
Observe that the separation property of the system plays a crucial role in this study.

\subsubsection*{Concluding Remarks}
In this work, we have addressed  two different pointwise tracking optimal control problems for the local CHNS system in dimension 2, where  $L^2$ control is applied in the concentration equation. Our analysis has been carried out assuming a mobility function with a bounded derivative and a regular potential. 

A similar investigation can be extended to the nonlocal system under various assumptions on mobility and potential. According to the well-posedness results established in \cite{FGG, FSC, FGK, FGS}, with suitable assumptions on the initial data, potential, and mobility, the solutions satisfy $\varphi \in L^\infty(0,T; H^2)$ and $\mathbf{u} \in L^2(0,T; \mathbb{V}_{div})$. While these regularity results ensure existence, they are generally insufficient to study the differentiability properties of the control-to-state map. In particular, the nonlocal potential complicates the derivation of strong solutions for the linearized system, which is a key step in proving differentiability. This remains an open problem and is an important direction for future research, given the practical relevance of nonlocal CHNS systems.

\section*{Appendix}

\begin{prop}\label{propA}
    Let all the hypotheses of Theorem \ref{theorem3.4} be satisfied. Additionally assume $F\in C^5(\mathbb{R})$ and $m\in C^3(\mathbb{R})$. We prove the following additional regularity of $(\rho, \textbf{v})$, the solution of \eqref{eq2.30}-\eqref{eq2.35}.
    \begin{align}\label{eq4.3}
        \|\rho \|_{L^\infty(0,T;H^2(\Omega)) \cap L^2(0,T;H^4(\Omega)) \cap  H^1(0,T; V')} + \|\textbf{v}\|_{H^1(0,T;\mathbb{V}_{div}')} \leq \|h\|_{L^2(0,T;H)}^2.
    \end{align}
\end{prop}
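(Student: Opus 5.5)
The estimate will be obtained by an $H^2$-level energy argument for the remainder system \eqref{eq2.30}--\eqref{eq2.35}, in the same way that Theorem \ref{theorem3.3} upgrades Theorem \ref{theorem2} and that \eqref{cd3} upgrades \eqref{cd2}. Concretely, I would test \eqref{eq2.30} with $\Delta^2\rho$, which is admissible at the Galerkin level since $\rho(0)=0$ and the Neumann conditions \eqref{eq2.34} hold. This gives
\begin{equation*}
\frac12\frac{d}{dt}\|\Delta\rho\|^2 + m_0\|\Delta^2\rho\|^2 \le \sum_j |\mathcal{T}_j| .
\end{equation*}
Here the $\mathcal{T}_j$ are the remaining terms, and I would sort them into two groups.

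The first group is linear in $(\rho,\textbf{v})$ with coefficients built from $(\Bar{\varphi},\Bar{\textbf{u}},\Bar{\mu})$:
\begin{equation*}
(\Bar{\textbf{u}}\cdot\nabla\rho,\Delta^2\rho),\quad (\textbf{v}\cdot\nabla\Bar{\varphi},\Delta^2\rho),\quad \big(\nabla\cdot(m(\Bar{\varphi})\nabla(F''(\Bar{\varphi})\rho)),\Delta^2\rho\big),
\end{equation*}
together with the $m'(\Bar{\varphi})\rho\nabla\Bar{\mu}$ piece, the $m'(\Bar{\varphi})\nabla\Bar{\varphi}\nabla\Delta\rho$ piece, and the $F'''(\Bar{\varphi})\nabla\Bar{\varphi}\rho$ piece of \eqref{F12}. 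These are handled exactly like $I_1,\dots,I_5$ and $J_1,J_2,J_3$ in the proof of Theorem \ref{theorem3.3}. Each is bounded by $\epsilon\|\Delta^2\rho\|^2+\Lambda(t)\big(\|\rho\|_{H^2}^2+\|\nabla\textbf{v}\|^2\big)$, where $\Lambda\in L^1(0,T)$ by Proposition \ref{prop2}. The second group consists of the genuinely quadratic remainders in $(\xi,\bm{\tau})$:
\begin{equation*}
\bm{\tau}\cdot\nabla\xi,\quad \nabla\cdot\big(m'(\Bar{\varphi})\xi\nabla(-\Delta\xi+F'(\varphi_h)-F'(\Bar{\varphi}))\big),\quad \nabla\cdot\big(m(\Bar{\varphi})(P_h\xi\nabla\xi+Q_h\xi^2\nabla\Bar{\varphi})\big),
\end{equation*}
together with $\nabla\cdot\big((m''(\hat\varphi)\xi^2+m'(\Bar{\varphi})\rho)\nabla\Bar{\mu}\big)$. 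These must be bounded by $\epsilon\|\Delta^2\rho\|^2+G(t)$ with $\int_0^T G\le C\|h\|^4_{L^2(0,T;H)}$.

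Taking a further divergence of \eqref{F12} produces terms with $F^{(5)}$ and $m'''$; this is where $F\in C^5$ and $m\in C^3$ enter. The uniform $L^\infty(Q)$ bound on $\Bar{\varphi},\varphi_h$ from Proposition \ref{prop2} makes all Taylor coefficients $P_h,Q_h,\nabla P_h,\dots$ bounded by polynomials in $\|\nabla\Bar{\varphi}\|_{L^\infty}$ and $\|\nabla\varphi_h\|_{L^\infty}$, which lie in $L^2(0,T)$ via the $H^3$ norms.

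The main obstacle is the highest-order quadratic term $\nabla\cdot(m'(\Bar{\varphi})\xi\nabla\Delta\xi)$. Its expansion contains $\xi\,\Delta^2\xi$ and $\nabla\xi\cdot\nabla\Delta\xi$, so it needs $\xi\in L^2(0,T;H^4)$. I would bound it by
\begin{equation*}
\|\xi\|_{L^\infty}\|\xi\|_{H^4}\|\Delta^2\rho\|+\|\nabla\xi\|_{L^4}\|\nabla\Delta\xi\|_{L^4}\|\Delta^2\rho\|.
\end{equation*}
After Young's inequality, the time integral is at most $C\|\xi\|_{L^\infty(0,T;H^2)}^2\|\xi\|_{L^2(0,T;H^4)}^2$. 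By \eqref{cd3} (with equal initial data) this is $\le C\|h\|^4_{L^2(0,T;H)}$. All other quadratic terms are treated the same way, using \eqref{cd1}--\eqref{cd3} for $\xi,\bm{\tau}$ and Agmon/Gagliardo--Nirenberg in two dimensions. The term $\bm{\tau}\cdot\nabla\xi$ needs only $\|\bm{\tau}\|_{L^4}\|\nabla\xi\|_{L^4}$.

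The coupling term $(\textbf{v}\cdot\nabla\Bar{\varphi},\Delta^2\rho)$ requires $\int_0^T\|\nabla\textbf{v}\|^2$, which is already available from \eqref{eq3.43} as $C\|h\|^4$. So I would not need a new velocity energy estimate; I would simply add this $\rho$-inequality to the $L^2$ bound \eqref{eq3.43}. Gronwall's lemma and the elliptic estimate $\|\rho\|_{H^4}\le C(\|\Delta^2\rho\|+\|\rho\|)$ for Neumann data then give
\begin{equation*}
\sup_t\|\rho\|_{H^2}^2+\|\rho\|^2_{L^2(0,T;H^4)}\le C\|h\|^4_{L^2(0,T;H)} .
\end{equation*}

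Finally, I would obtain the time-derivative bounds by comparison, as for \eqref{eq3.24}--\eqref{eq3.25}. For $\chi\in V$, I would estimate $\langle\rho',\chi\rangle$ term by term using the bounds just obtained, which gives $\|\rho'\|_{L^2(0,T;V')}\le C\|h\|^2$. The bound for $\textbf{v}'$ in $L^2(0,T;\mathbb{V}_{div}')$ follows from the estimate already displayed after \eqref{eq3.43}. Taking square roots yields \eqref{eq4.3}, with a constant $C$ implicit in the statement.
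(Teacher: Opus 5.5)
Your proposal is correct and follows essentially the paper's own argument. You test \eqref{eq2.30} with $\Delta^2\rho$, control the terms linear in $(\rho,\textbf{v})$ by the strong-solution regularity of $(\Bar{\varphi},\Bar{\textbf{u}},\Bar{\mu})$ and the quadratic Taylor remainders (where $F\in C^5$ and $m\in C^3$ enter) by \eqref{cd1}--\eqref{cd3}, close with Gronwall, \eqref{eq3.43} and the Neumann elliptic estimate, then obtain $\rho'$ by comparison and $\textbf{v}'$ from the estimate in the proof of Theorem \ref{theorem3.4}. If anything, your version is slightly more careful than the paper's: you bound $(\Bar{\textbf{u}}\cdot\nabla\rho,\Delta^2\rho)$ instead of claiming it vanishes (it does not in general), and your $C\|h\|^4_{L^2(0,T;H)}$ bookkeeping for squared norms is consistent with the final $\|h\|^2$ bound. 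One small thing: the $\textbf{v}'$ display you cite drops the term $((\Delta\xi)\nabla\xi,\bm{\nu})$, which is harmless because $|((\Delta\xi)\nabla\xi,\bm{\nu})|\le C\|\xi\|_{H^2}^2\|\nabla\bm{\nu}\|$ and $\sup_t\|\xi\|_{H^2}^2\le C\|h\|^2_{L^2(0,T;H)}$ by \eqref{cd3}.
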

\begin{proof}

 Continuing from the estimates derived in the proof of Theorem \ref{theorem3.4}, we test \eqref{eq2.30} with $\Delta^2\rho$ to obtain,
\begin{align}
    &\frac{1}{2}\frac{d}{dt}\|\Delta\rho\|^2 + (\bm{\tau}\cdot\nabla\xi,\Delta^2\rho) + (\Bar{u}\cdot\nabla\rho,\Delta^2\rho) + (\textbf{v}\cdot\nabla\Bar{\varphi}, \Delta^2\rho) 
    \nonumber\\&\hspace{.5cm}= (\nabla\cdot(m(\Bar{\varphi})\nabla(-\Delta\rho)), \Delta^2\rho) + (\nabla\cdot(m(\Bar{\varphi})\nabla(F'(\varphi_h)-F'(\Bar{\varphi})-F''(\Bar{\varphi})\psi)), \Delta^2\rho) \nonumber\\&\hspace{.5cm}+ (\nabla\cdot(m'(\Bar{\varphi})\xi\nabla(-\Delta\xi)), \Delta^2\rho) + (\nabla\cdot(m'(\Bar{\varphi})\xi\nabla(F'(\varphi_h)-F'(\Bar{\varphi}))), \Delta^2\rho)\nonumber\\&\hspace{.5cm}
    +(\nabla\cdot((m(\varphi_h)-m(\Bar{\varphi})-m'(\Bar{\varphi})\psi)\nabla(-\Delta\Bar{\varphi}+F'(\Bar{\varphi}))), \Delta^2\rho).
\end{align}
To estimate the terms, we invoke Hölder’s, Gagliardo–Nirenberg, Poincaré and Young’s inequalities. 
\begin{align*}
    |(\bm{\tau}\cdot\nabla\xi,\Delta^2\rho)| &\leq \|\bm{\tau}\|_{L^4}\|\nabla\xi\|_{L^4}\|\Delta^2\rho\| \leq \epsilon\|\Delta^2\rho\|^2 + C\|\bm{\tau}\|\|\nabla\bm{\tau}\|\|\nabla\xi\|\|\xi\|_{H^2},\\
    (\Bar{u}\cdot\nabla\rho,\Delta^2\rho) &= \int_\Omega\Bar{u}\cdot\nabla(\frac{(\nabla\rho)^2}{2}) dx =0, \\|(\textbf{v}\cdot\nabla\Bar{\varphi}, \Delta^2\rho)| &\leq \|\textbf{v}\|_{L^4}\|\nabla\Bar{\varphi}\|_{L^4}\| \Delta^2\rho\| \leq \epsilon\|\Delta^2\rho\|^2 + C\|\textbf{v}\|\|\nabla\textbf{v}\|\|\nabla\Bar{\varphi}\|\|\Bar{\varphi}\|_{H^2}.
\end{align*}
Proceeding further,
\begin{align*}
    (\nabla\cdot(m(\Bar{\varphi})\nabla(-\Delta\rho)), \Delta^2\rho) &= (m'(\Bar{\varphi})\nabla\Bar{\varphi}\nabla(-\Delta\rho), \Delta^2\rho) -(m(\Bar{\varphi})\Delta^2\rho), \Delta^2\rho),\\
    (m(\Bar{\varphi})\Delta^2\rho, \Delta^2\rho) &\geq m_0\|\Delta^2\rho\|^2,\\
    |(m'(\Bar{\varphi})\nabla\Bar{\varphi}\nabla(-\Delta\rho), \Delta^2\rho)| &\leq C\|\nabla\Bar{\varphi}\|_{L^4}\|\nabla^3\rho\|_{L^4}\|\Delta^2\rho\| \leq C\|\nabla\Bar{\varphi}\|^{1/2}\|\Bar{\varphi}\|_{H^2}^{1/2}\|\rho\|_{H^4}^{7/8}\|\rho\|^{1/8}\|\Delta^2\rho\|
    \\&\leq \epsilon\|\Delta^2\rho\|^2+  C\|\nabla\Bar{\varphi}\|\|\Bar{\varphi}\|_{H^2}\|\Delta^2\rho\|^{7/4}\|\rho\|^{1/4} + C\|\rho\|^2
    \\&\leq 2\epsilon\|\Delta^2\rho\|^2+  C\|\nabla\Bar{\varphi}\|^2\|\Bar{\varphi}\|_{H^2}^2\|\Delta^2\rho\|^{3/2}\|\rho\|^{1/2} + C\|\rho\|^2
    \\&\leq 3\epsilon\|\Delta^2\rho\|^2+  C\|\nabla\Bar{\varphi}\|^4\|\Bar{\varphi}\|_{H^2}^4\|\Delta^2\rho\|\|\rho\| + C\|\rho\|^2
     \\&\leq 4\epsilon\|\Delta^2\rho\|^2+  C\|\nabla\Bar{\varphi}\|^8\|\Bar{\varphi}\|_{H^2}^8\|\rho\|^2 + C\|\rho\|^2.
\end{align*}
Observe that, the last term is estimated by a repeated application of Gagliardo-Nirenberg and Young's inequalities. Indeed we employ an elliptic estimate for the $H^4$ norm of $\rho$.

Before estimating the next term, we derive an expression for the term $\Delta\big( F'(\varphi_h)-F'(\Bar{\varphi})-F''(\Bar{\varphi})\psi \big)$ following similar methods as in the proof of theorem \ref{theorem3.4}. Recalling \eqref{F11} \eqref{F12}, which give the expressions for  $F'(\varphi_h)-F'(\Bar{\varphi})-F''(\Bar{\varphi})\psi$ and its gradient, we differentiate further to obtain,

\begin{align}\label{F13}
  &\Delta\big( F'(\varphi_h)-F'(\Bar{\varphi})-F''(\Bar{\varphi})\psi \big) \nonumber\\&= \Delta\xi\int\limits_{0}^{1} \int\limits_{0}^{1} F'''(s(r\varphi_h +(1-r)\Bar{\varphi}) + (1-s)\Bar{\varphi}) r\xi dr + \nabla\xi \int\limits_{0}^{1} F'''(r\varphi_h +(1-r)\Bar{\varphi}) r\nabla\xi dr\nonumber\\
  &\hspace{.5cm}+ \nabla\xi \int\limits_{0}^{1} F^{(4)}(s(r\varphi_h +(1-r)\Bar{\varphi}) + (1-s)\Bar{\varphi}) r\xi\nabla\Bar{\varphi} dr + \nabla\xi\int\limits_{0}^{1} F'''(r\varphi_h +(1-r)\Bar{\varphi}) r\nabla\xi dr \nonumber\\
  &\hspace{.5cm}+ \xi\int\limits_{0}^{1} F'''(r\varphi_h +(1-r)\Bar{\varphi})r\Delta\xi dr  + \xi\int\limits_{0}^{1} F^{(4)}(r\varphi_h +(1-r)\Bar{\varphi})(r\nabla\varphi_h +(1-r)\nabla\Bar{\varphi})r\nabla\xi dr\nonumber\\
  &\hspace{.5cm}+ \nabla\xi\int\limits_{0}^{1} F^{(4)}(s(r\varphi_h +(1-r)\Bar{\varphi}) + (1-s)\Bar{\varphi})r\xi\nabla\Bar{\varphi}dr + \xi\int\limits_{0}^{1} F^{(4)}(s(r\varphi_h +(1-r)\Bar{\varphi}) + (1-s)\Bar{\varphi})r\xi\Delta\Bar{\varphi}dr\nonumber\\
  &\hspace{.5cm}+ \xi\int\limits_{0}^{1} (F^{(4)}(r\varphi_h+ (1-r)\Bar{\varphi})r\nabla\xi\cdot\nabla\Bar{\varphi} dr+ \xi\int\limits_{0}^{1} F^{(5)}(s(r\varphi_h +(1-r)\Bar{\varphi}) + (1-s)\Bar{\varphi})r\xi\nabla\Bar{\varphi}\cdot \nabla\Bar{\varphi} dr
  \nonumber\\&\hspace{.5cm}+ F^{(4)}(\Bar{\varphi})\nabla\Bar{\varphi}\cdot\nabla\Bar{\varphi}\rho + F'''(\Bar{\varphi})\Delta\Bar{\varphi}\rho +  2F'''(\Bar{\varphi})\nabla\Bar{\varphi}\cdot\nabla\rho  + F''(\Bar{\varphi})\Delta\rho. 
\end{align}
Observe that, 
\begin{align*}
    |(\nabla\cdot(m(\Bar{\varphi})\nabla(F'(\varphi_h)-F'(\Bar{\varphi})-F''(\Bar{\varphi})\psi)), \Delta^2\rho)| &\leq |(m'(\Bar{\varphi})\nabla\Bar{\varphi}\nabla(F'(\varphi_h)-F'(\Bar{\varphi})-F''(\Bar{\varphi})\psi), \Delta^2\rho)| \\
    & \hspace{.5cm}+|(m(\Bar{\varphi})\Delta(F'(\varphi_h)-F'(\Bar{\varphi})-F''(\Bar{\varphi})\psi), \Delta^2\rho)|.
\end{align*}
Using expressions \eqref{F12}, \eqref{F13} and standard inequalities and sobolev embeddings, proceeding further we obtain,
\begin{align*}
    |(m'(\Bar{\varphi})\nabla\Bar{\varphi}\nabla(F'(\varphi_h)-F'(\Bar{\varphi})-F''(\Bar{\varphi})\psi), \Delta^2\rho)| &\leq C\Big[ \|\nabla\Bar{\varphi}\|_{L^8}\|\nabla\xi\|_{L^4}\|\xi\|_{L^8}\|\Delta^2\rho\| + \|\nabla\Bar{\varphi}\|_{L^8}^2\|\xi\|_{L^8}^2\|\Delta^2\rho\| \\&\hspace{.5cm}+ \|\nabla\Bar{\varphi}\|_{L^8}^2\|\rho\|_{L^4}\|\Delta^2\rho\| + \|\nabla\Bar{\varphi}\|_{L^4}\|\nabla\rho\|_{L^4}\|\Delta^2\rho\| \Big]\\ 
    &\leq 4\epsilon\|\Delta^2\rho\|^2 + \|\Bar{\varphi}\|_{H^2}^{3/2}\|\Bar{\varphi}\|_{L^\infty}^{1/2}\|\xi\|_{H^2}^{3/2}\|\xi\|^{1/2}\|\xi\|_{V}^{3/2}\|\xi\|^{1/2} \\&\hspace{.5cm}+ \|\Bar{\varphi}\|_{H^2}^{3}\|\Bar{\varphi}\|_{L^\infty}\|\xi\|_{V}^{3}\|\xi\| + \|\Bar{\varphi}\|_{H^2}^{3}\|\Bar{\varphi}\|_{L^\infty}\|\rho\|_{V}\|\rho\| \\&\hspace{.5cm}+ \|\Bar{\varphi}\|_{H^2}^{3/2}\|\Bar{\varphi}\|^{1/2}\|\rho\|_{H^2}^{3/2}\|\rho\|^{1/2}\\
     &\leq 4\epsilon\|\Delta^2\rho\|^2 + \|\Bar{\varphi}\|_{H^2}^{2}\|\xi\|_{H^2}^{2}\|\xi\|_{V}^{2} + \|\Bar{\varphi}\|_{H^2}^{3}\|\Bar{\varphi}\|_{L^\infty}\|\xi\|_{V}^{3}\|\xi\| \\&\hspace{.5cm}+ \|\Bar{\varphi}\|_{H^2}^{3}\|\Bar{\varphi}\|_{L^\infty}\|\rho\|_{V}\|\rho\| + \|\Bar{\varphi}\|_{H^2}^{2}\|\rho\|_{H^2}^{2}.
\end{align*}
Observe that in the previous calculations we have used the fact that $\Bar{\varphi} \in C([0,T]; C(\Bar{\Omega}))$ and $F\in C^5(\mathbb{R})$. In addition, the regularity results for the strong solution together with the stability estimates \eqref{cd1}- \eqref{cd3} justify these computations. Similarly,
\begin{align*}
    |(m(\Bar{\varphi})\Delta(F'(\varphi_h)-F'(\Bar{\varphi})-F''(\Bar{\varphi})\psi), \Delta^2\rho)| &\leq \big[ \|\Delta\xi\|\|\xi\|_{L^\infty} + \|\nabla\xi\|_{L^4}^2 + \|\nabla\Bar{\varphi}\|_{L^8}\|\xi\|_{L^8}\|\nabla\xi\|_{L^4} + \|\Delta\Bar{\varphi}\|_{L^4}\|\xi\|_{L^8}^2  \\
    &\hspace{.5cm}+ \|\nabla\Bar{\varphi}\|_{L^8}^2\|\xi\|_{L^8}^2 + \|\nabla\Bar{\varphi}\|_{L^8}^2\|\rho\|_{L^4} + \|\Delta\Bar{\varphi}\|_{L^4}\|\rho\|_{L^4} \\
    &\hspace{.5cm}+ \|\nabla\Bar{\varphi}\|_{L^4}\|\nabla\rho\|_{L^4} + \|\Delta\rho\| \big]\|\Delta^2\rho\| \\
    &\leq \big[ \|\xi\|_{H^2}^2 + \|\xi\|_{H^2}\|\xi\|_{V} + \|\Bar{\varphi}\|_{H^2}^{3/4}\|\Bar{\varphi}\|_{L^\infty}^{1/4}\|\xi\|_{V}^{3/4}\|\xi\|^{1/4}\|\xi\|_{H^2}^{3/4}\|\xi\|^{1/4} \\
    &\hspace{.5cm}+ \|\Bar{\varphi}\|_{H^3}^{3/4}\|\Bar{\varphi}\|_{L^\infty}^{1/4}\|\xi\|_{V}^{3/2}\|\xi\|^{1/2}  + \|\Bar{\varphi}\|_{H^2}^{3/2}\|\Bar{\varphi}\|_{L^\infty}^{1/2}\|\xi\|_{V}^{3/2}\|\xi\|^{1/2} \\
    &\hspace{.5cm}+ \|\Bar{\varphi}\|_{H^2}^{3/2}\|\Bar{\varphi}\|_{L^\infty}^{1/2}\|\rho\|_{V}^{1/2}\|\rho\|^{1/2} + \|\Bar{\varphi}\|_{H^3}^{3/4}\|\Bar{\varphi}\|_{L^\infty}^{1/4}\|\rho\|_{V}^{1/2}\|\rho\|^{1/2} \\
    &\hspace{.5cm}+ \|\Bar{\varphi}\|_{H^2}^{3/4}\|\Bar{\varphi}\|^{1/4}\|\rho\|_{H^2}^{3/4}\|\rho\|^{1/4} + \|\Delta\rho\| \big]\|\Delta^2\rho\| \\
    &\leq \epsilon\|\Delta^2\rho\| ^2 + C \big[ \|\xi\|_{H^2}^4 + \|\xi\|_{H^2}^2\|\xi\|_{V}^2 + \|\Bar{\varphi}\|_{H^2}^{4}\|\xi\|_{V}^{2}\|\xi\|_{H^2}^{2} \\
    &\hspace{.5cm}+ \|\Bar{\varphi}\|_{H^3}^{2}\|\xi\|_{V}^{3}\|\xi\| + \|\Bar{\varphi}\|_{H^2}^{4}\|\xi\|_{V}^{3}\|\xi\| + \|\Bar{\varphi}\|_{H^2}^{4}\|\rho\|_{V}\|\rho\| 
    \\ &\hspace{.5cm}+ \|\Bar{\varphi}\|_{H^3}^{2}\|\rho\|_{V}\|\rho\| + \|\Bar{\varphi}\|_{H^2}^{2}\|\rho\|_{H^2}^{2} + \|\Delta\rho\|^2 \big].
\end{align*}
The next term can be expressed as,
\begin{align*}
    (\nabla\cdot(m'(\Bar{\varphi})\xi\nabla(-\Delta\xi)), \Delta^2\rho) &= (m''(\Bar{\varphi})\nabla\Bar{\varphi}\xi\nabla(-\Delta\xi), \Delta^2\rho) + (m'(\Bar{\varphi})\nabla\xi\nabla(-\Delta\xi), \Delta^2\rho) + (m'(\Bar{\varphi})\xi\Delta(-\Delta\xi), \Delta^2\rho),
\end{align*}
Using similar techniques, we obtain
\begin{align*}
    |(m''(\Bar{\varphi})\nabla\Bar{\varphi}\xi\nabla(-\Delta\xi), \Delta^2\rho)| &\leq C\|\nabla\Bar{\varphi}\|_{L^8}\|\xi\|_{L^8}\|\nabla^3\xi\|_{L^4} \|\Delta^2\rho\|\\
    &\leq \epsilon\|\Delta^2\rho\|^2 + C\|\Bar{\varphi}\|_{H^2}^{3/2}\|\Bar{\varphi}\|_{L^\infty}^{1/2}\|\xi\|_{V}^{3/2}\|\xi\|^{1/2}\|\xi\|_{H^4}^{7/4}\|\xi\|^{1/4}\\
    &\leq \epsilon\|\Delta^2\rho\|^2 + C\|\Bar{\varphi}\|_{H^2}^2\|\xi\|_{V}^2\|\xi\|_{H^4}^2 + C\|\Bar{\varphi}\|_{H^2}\|\Bar{\varphi}\|_{L^\infty}\|\xi\|_{V}\|\xi\|_{H^4}^{3/2}\|\xi\|^{3/2}\\
     &\leq \epsilon\|\Delta^2\rho\|^2 + C\|\Bar{\varphi}\|_{H^2}^2\|\xi\|_{V}^2\|\xi\|_{H^4}^2 + C\|\Bar{\varphi}\|_{L^\infty}^2\|\xi\|_{H^4}\|\xi\|^3,\\
     |(m'(\Bar{\varphi})\nabla\xi\nabla(-\Delta\xi), \Delta^2\rho)| &\leq C\|\nabla\xi\|_{L^4}\|\nabla^3\xi\|_{L^4}\|\Delta^2\rho\| \leq \epsilon\| \Delta^2\rho\|^2 + C\|\xi\|_{H^2}^{3/2}\|\xi\|^{1/2}\|\xi\|_{H^4}^{7/4}\|\xi\|^{1/4}\\ 
     &\leq \epsilon\|\Delta^2\rho\|^2 + C\|\xi\|_{H^2}^2\|\xi\|_{H^4}^2+ C\|\xi\|_{H^2}\|\xi\|_{H^4}^{3/2}\|\xi\|^{3/2}\\ 
     &\leq \epsilon\|\Delta^2\rho\|^2 + C\|\xi\|_{H^2}^2\|\xi\|_{H^4}^2+ C\|\xi\|_{H^4}\|\xi\|^3,\\
     |(m'(\Bar{\varphi})\xi\Delta(-\Delta\xi), \Delta^2\rho)| &\leq C\|\xi\|_{L^\infty}\|\Delta^2\xi\|\|\Delta^2\rho\| \leq \epsilon\|\Delta^2\rho\|^2 + C\|\xi\|\|\xi\|_{H^2}\|\xi\|_{H^4}^2.
\end{align*}
By Taylor’s formula, we obtain the following expression.
\begin{equation*}
\nabla(F'(\varphi_h)-F'(\Bar{\varphi})) = F''(\varphi_h)\nabla\varphi_h -F''(\Bar{\varphi})\nabla\Bar{\varphi} = F'''(\Hat{\varphi})\nabla\varphi_h + F''(\Bar{\varphi})\nabla\xi,
\end{equation*}
where $\Hat{\varphi} = r\varphi_h + (1-r)\Bar{\varphi}$, $r\in (0,1)$. It follows that, 
\begin{align*}
     |(\nabla\cdot(m'(\Bar{\varphi})\xi\nabla(F'(\varphi_h)-F'(\Bar{\varphi}))), \Delta^2\rho)| &\leq 
     |(\nabla\cdot(m'(\Bar{\varphi})\xi(F'''(\Hat{\varphi})\nabla\varphi_{h}\xi + F''(\Bar{\varphi})\nabla\xi )), \Delta^2\rho)|\\
     &\leq 
     |(m''(\Bar{\varphi})\nabla\Bar{\varphi}\xi(F'''(\Hat{\varphi})\nabla\varphi_{h}\xi + F''(\Bar{\varphi})\nabla\xi ), \Delta^2\rho)| \\&+ |(m'(\Bar{\varphi})\nabla\xi(F'''(\Hat{\varphi})\nabla\varphi_{h}\xi + F''(\Bar{\varphi})\nabla\xi ), \Delta^2\rho)| \\
      &+ |(m'(\Bar{\varphi})\xi\Delta(F'(\varphi_h)-F'(\Bar{\varphi})), \Delta^2\rho)|,
\end{align*}
Now estimating each of the term separately by applying similar techniques, we obtain,
\begin{align*}
    |(m''(\Bar{\varphi})\nabla\Bar{\varphi}\xi(F'''(\Hat{\varphi})\nabla\varphi_{h}\xi + F''(\Bar{\varphi})\nabla\xi ), \Delta^2\rho)| &\leq C\|\nabla\Bar{\varphi}\|_{L^8}\|\nabla\varphi_{h}\|_{L^8}\|\xi\|_{L^8}^2 \|\Delta^2\rho\| + C \|\nabla\Bar{\varphi}\|_{L^8}\|\xi\|_{L^8}\|\nabla\xi\|_{L^4}\|\Delta^2\rho\|\\
     &\leq C\|\Bar{\varphi}\|_{H^2}^{3/4}\|\Bar{\varphi}\|_{L^\infty}^{1/4}\|\varphi_{h}\|_{H^2}^{3/4}\|\varphi_{h}\|_{L^\infty}^{1/4}\|\xi\|_{V}^{3/2}\|\xi\|^{1/2}\|\Delta^2\rho\| \\&\hspace{.5cm}+ C \|\Bar{\varphi}\|_{H^2}^{3/4}\|\Bar{\varphi}\|_{L^\infty}^{1/4}\|\xi\|_{V}^{3/4}\|\xi\|^{1/4}\|\xi\|_{H^2}^{1/2}\|\xi\|_V^{1/2}\|\Delta^2\rho\|\\
     &\leq \epsilon\|\Delta^2\rho\|^2 + C\|\Bar{\varphi}\|_{H^2}^{3/2}\|\Bar{\varphi}\|_{L^\infty}^{1/2}\|\varphi_{h}\|_{H^2}^{3/2}\|\varphi_{h}\|_{L^\infty}^{1/2}\|\xi\|_{V}^{3}\|\xi\|  \\&\hspace{.5cm}+ C \|\Bar{\varphi}\|_{H^2}^{3/2}\|\Bar{\varphi}\|_{L^\infty}^{1/2}\|\xi\|_{V}^{3/2}\|\xi\|^{1/2}\|\xi\|_{H^2}\|\xi\|_{V}\\
      &\leq \epsilon\|\Delta^2\rho\|^2 + C\|\Bar{\varphi}\|_{H^2}^{2}\|\varphi_{h}\|_{H^2}^{2}\|\xi\|_{V}^{3}\|\xi\|  \\&\hspace{.5cm}+ C \|\Bar{\varphi}\|_{H^2}^{2}\|\xi\|_{V}^3\|\xi\|_{H^2},
\end{align*}

\begin{align*}
      |(m'(\Bar{\varphi})\nabla\xi(F'''(\Hat{\varphi})\nabla\varphi_{h}\xi + F''(\Bar{\varphi})\nabla\xi ), \Delta^2\rho)| &\leq C\|\nabla\xi\|_{L^4}\|\nabla\varphi_{h}\|_{L^8}\|\xi\|_{L^8}\|\Delta^2\rho\| + C\|\nabla\xi\|_{L^4}^2\|\Delta^2\rho\|\\
      &\leq C\|\xi\|_{H^2}^{1/2}\|\xi\|_{V}^{1/2}\|\Bar{\varphi}\|_{H^2}^{3/4}\|\Bar{\varphi}\|_{L^\infty}^{1/4}\|\xi\|_{V}^{3/4}\|\xi\|^{1/4}\|\Delta^2\rho\| \\&\hspace{.5cm}+ C\|\xi\|_{H^2}\|\xi\|_{V}\|\Delta^2\rho\|\\
      &\leq 2\epsilon\|\Delta^2\rho\|^2 + C\|\xi\|_{H^2}\|\xi\|_{V}\|\Bar{\varphi}\|_{H^2}^2\|\xi\|_{V}^2+ C\|\xi\|_{H^2}^2\|\xi\|_{V}^2\\
       &\leq 2\epsilon\|\Delta^2\rho\|^2 + C\|\Bar{\varphi}\|_{H^2}^2\|\xi\|_{H^2}\|\xi\|_{V}^3 + C\|\xi\|_{H^2}^2\|\xi\|_{V}^2,
\end{align*}

\begin{align*}
      |(m'(\Bar{\varphi})\xi\nabla(F'''(\Hat{\varphi})\nabla\varphi_{h}\xi + F''(\Bar{\varphi})\nabla\xi ), \Delta^2\rho)| &\leq  |(m'(\Bar{\varphi})\xi(F^{(4)}(\Hat{\varphi})\nabla\Bar{\varphi}\cdot\nabla\varphi_h\xi + F'''(\Hat{\varphi})\Delta\varphi_{h}\xi+ F'''(\Bar{\varphi})\nabla\Bar{\varphi}\cdot\nabla\xi), \Delta^2\rho)| 
      \\&\hspace{.5cm} +|(m'(\Bar{\varphi})\xi(F'''(\Bar{\varphi})\nabla\varphi_{h}\cdot\nabla\xi + F''(\Bar{\varphi})\Delta\xi), \Delta^2\rho)| \\
      &\leq  \|\nabla\Bar{\varphi}\|_{L^8}\|\nabla\varphi_{h}\|_{L^8}\|\xi\|_{L^8}^2\|\Delta^2\rho\| + \|\Delta\varphi_{h}\|_{L^4}\|\xi\|_{L^8}^2\|\Delta^2\rho\| 
      \\&\hspace{.5cm}+ \|\nabla\Bar{\varphi}\|_{L^8}\|\xi\|_{L^8}\|\nabla\xi\|_{L^4}\|\Delta^2\rho\| + \|\nabla\varphi_{h}\|_{L^8}\|\xi\|_{L^8}\|\nabla\xi\|_{L^4}\|\Delta^2\rho\| \\&\hspace{.5cm}+ \|\xi\|_{L^\infty}\|\Delta\xi\|\|\Delta^2\rho\| \\  &\leq  \|\Bar{\varphi}\|_{H^2}\|\varphi_{h}\|_{H^2}\|\xi\|_{V}^{3/2}\|\xi\|^{1/2}\|\Delta^2\rho\| + \|\varphi_{h}\|_{H^3}^{3/4}\|\varphi_{h}\|^{1/4}\|\xi\|_{V}^{3/2}\|\xi\|^{1/2}\|\Delta^2\rho\| 
      \\&\hspace{.5cm}+ \|\Bar{\varphi}\|_{H^2}^{3/4}\|\Bar{\varphi}\|_{L^\infty}^{1/4}\|\xi\|_{V}^{3/4}\|\xi\|^{1/4}\|\xi\|_{H^2}^{1/2}\|\xi\|_{V}^{1/2}\|\Delta^2\rho\| \\&\hspace{.5cm}+ \|\varphi_{h}\|_{H^2}^{3/4}\|\varphi_{h}\|_{L^\infty}^{1/4}\|\xi\|_{V}^{3/4}\|\xi\|^{1/4}\|\xi\|_{H^2}^{1/2}\|\xi\|_{V}^{1/2}\|\Delta^2\rho\|+ \|\xi\|_{H^2}\|\Delta\xi\|\|\Delta^2\rho\|
      \\  &\leq 5\epsilon\|\Delta^2\rho\|^2 + 
      \|\Bar{\varphi}\|_{H^2}^2\|\varphi_{h}\|_{H^2}^2\|\xi\|_{V}^3\|\xi\| + \|\varphi_{h}\|_{H^3}^{2}\|\xi\|_{V}^3\|\xi\|
      \\&\hspace{.5cm}+ \|\Bar{\varphi}\|_{H^2}^{2}\|\xi\|_{H^2}\|\xi\|_{V}^3 + \|\varphi_{h}\|_{H^2}^{2}\|\xi\|_{H^2}\|\xi\|_{V}^3+ \|\xi\|_{H^2}^2\|\Delta\xi\|^2.
\end{align*}
Using Taylor's formula, we have,
\begin{align*}
    m(\varphi_h) = m(\Bar{\varphi}) + m'(\Bar{\varphi})(\varphi_h-\Bar{\varphi}) + m'''(\Hat{\varphi}) (\varphi_h-\Bar{\varphi})^2. 
\end{align*}
\begin{align*}
    |\big(\nabla\cdot((m(\varphi_h)-m(\Bar{\varphi})-m'(\Bar{\varphi})\psi)\nabla\Bar{\mu}), \Delta^2\rho\big)| &\leq  |\big(\nabla(m(\varphi_h)-m(\Bar{\varphi})-m'(\Bar{\varphi})\psi)\nabla\Bar{\mu}, \Delta^2\rho\big)| \\&\hspace{.5cm}+ |\big( (m(\varphi_h)-m(\Bar{\varphi})-m'(\Bar{\varphi})\psi)\Delta\Bar{\mu}, \Delta^2\rho\big)|
\end{align*}
We used Gagliardo-Nirenberg, Agmon's and Sobolev inequalities to estimate the following. 
\begin{align*}
    |\big(\nabla(m(\varphi_h)-m(\Bar{\varphi})-m'(\Bar{\varphi})\psi)\nabla\Bar{\mu}, \Delta^2\rho\big)| &= |\big(\nabla(m'(\Bar{\varphi})\rho + m''(r\varphi_h + (1-r)\Bar{\varphi})\xi^2)\cdot\nabla\Bar{\mu}, \Delta^2\rho\big)|\\
    &\leq  \|\nabla\Bar{\mu}\|_{L^4}\|\xi\|_{L^8}\|\nabla\xi\|_{L^8}\|\Delta^2\rho\| + \|\nabla\Bar{\varphi}\|_{L^8}\|\nabla\Bar{\mu}\|_{L^8}\|\xi\|_{L^8}^2\|\Delta^2\rho\| \\
    &\hspace{.5cm}+ \|\nabla\Bar{\mu}\|_{L^4}\|\nabla\rho\|_{L^4}\|\Delta^2\rho\| + \|\nabla\Bar{\varphi}\|_{L^8}\|\nabla\Bar{\mu}\|_{L^4}\|\rho\|_{L^8}\|\Delta^2\rho\|\\
    &\leq 4\epsilon\|\Delta^2\rho\|^2 + \|\Bar{\mu}\|_{H^2}^{3/2}\|\Bar{\mu}\|^{1/2}\|\xi\|_{V}^{3/2}\|\xi\|^{1/2}\|\xi\|_{H^2}^{3/2}\|\xi\|_{L^\infty}^{1/2} \\&\hspace{.5cm}+ \|\Bar{\varphi}\|_{H^2}^{3/2}\|\Bar{\varphi}\|_{L^\infty}^{1/2}\|\Bar{\mu}\|_{H^2}^{3/2}\|\Bar{\mu}\|_{L^\infty}^{1/2}\|\xi\|_{V}^{3}\|\xi\| + \|\Bar{\mu}\|_{H^2}^{3/2}\|\Bar{\mu}\|^{1/2}\|\rho\|_{H^2}^{3/2}\|\rho\|^{1/2} \\
    &\hspace{.5cm}+ \|\Bar{\varphi}\|_{H^2}^{3/2}\|\Bar{\varphi}\|_{L^\infty}^{1/2}\|\Bar{\mu}\|_{H^2}^{3/2}\|\Bar{\mu}\|^{1/2}\|\rho\|_{V}^{3/2}\|\rho\|^{1/2}\\
    &\leq 4\epsilon\|\Delta^2\rho\|^2 + \|\Bar{\mu}\|_{H^2}^{2}\|\xi\|_{V}^{2}\|\xi\|_{H^2}^{2} + \|\Bar{\varphi}\|_{H^2}^{2}\|\Bar{\mu}\|_{H^2}^{2}\|\xi\|_{V}^{3}\|\xi\| \\&\hspace{.5cm}+ \|\Bar{\mu}\|_{H^2}^{2}\|\rho\|_{H^2}^{2} + \|\Bar{\varphi}\|_{H^2}^{2}\|\Bar{\mu}\|_{H^2}^{2}\|\rho\|_{V}^{2}.
\end{align*}
\begin{align*}
     |\big( (m(\varphi_h)-m(\Bar{\varphi})-m'(\Bar{\varphi})\psi)\Delta\Bar{\mu}, \Delta^2\rho\big)| &\leq   \|\Delta\Bar{\mu}\|\|\xi\|_{L^\infty}^2\|\Delta^2\rho\| + \|\Delta\Bar{\mu}\|\|\rho\|_{L^\infty}\|\Delta^2\rho\| \\
     &\leq 2\epsilon\|\Delta^2\rho\|^2 + \|\Delta\Bar{\mu}\|^2\|\xi\|_{H^2}^2\|\xi\|^2 + \|\Delta\Bar{\mu}\|^2\|\rho\|_{H^2}\|\rho\|.
\end{align*}
Now by combining all above estimates, and for a choice of $\epsilon< \frac{m_0}{40}$ we obtain,

\begin{align*}
    \frac{1}{2}\frac{d}{dt}\|\Delta\rho\|^2 + \frac{m_0}{2}\|\Delta^2\rho\|^2 &\leq \Lambda_1 + C(1+ \Lambda_0) \|\Delta\rho\|^2, 
\end{align*} 
Where $\Lambda_0$ and $\Lambda_1$ are given by,
\begin{align*}
    \Lambda_0 &= 1 + \|\Bar{\varphi}\|_{H^2}^{2} + \|\Bar{\mu}\|_{H^2}^2, \\
    \Lambda_1&= \|\bm{\tau}\|\|\nabla\bm{\tau}\|\|\nabla\xi\|\|\xi\|_{H^2}  + \|\textbf{v}\|\|\nabla\textbf{v}\|\|\nabla\Bar{\varphi}\|\|\Bar{\varphi}\|_{H^2}+ \|\xi\|_{H^2}^4 + \|\Bar{\varphi}\|_{H^2}^2\|\xi\|_{V}^2\|\xi\|_{H^4}^2 \\&\hspace{.5cm}+ (1+ \|\Bar{\varphi}\|_{L^\infty}^2)\|\xi\|_{H^4}\|\xi\|^3+  \|\xi\|_{H^4}^2\|\xi\|_{H^2}^2 + \|\xi\|\|\xi\|_{H^2}\|\xi\|_{H^4}^2 \\
    &\hspace{.5cm}+ \Big[ 1+ \|\Bar{\varphi}\|_{H^2}^{2} + \|\Bar{\varphi}\|_{H^2}^4 + \|\Bar{\mu}\|_{H^2}^2  \Big]\|\xi\|_{H^2}^{2}\|\xi\|_{V}^{2} + (\|\Bar{\varphi}\|_{H^2}^{2} + \|\varphi_{h}\|_{H^2}^{2})\|\xi\|_{V}^3\|\xi\|_{H^2}
    \\&\hspace{.5cm}+ \Big[ \|\Bar{\varphi}\|_{H^3}^2 + \|\Bar{\varphi}\|_{H^2}^4 + \|\Bar{\varphi}\|_{H^2}^2\|\Bar{\mu}\|_{H^2}^2 + \|\Bar{\varphi}\|_{H^2}^2\|\varphi_{h}\|_{H^2}^2 + \|\varphi_{h}\|_{H^3}^{2} \Big]\|\xi\|_{V}^{3}\|\xi\| \\&\hspace{.5cm}+\Big[1 + \|\nabla\Bar{\varphi}\|^8\|\Bar{\varphi}\|_{H^2}^8 + \|\Bar{\varphi}\|_{H^2}^2  + \|\Bar{\mu}\|_{H^2}^2 \Big]\|\rho\|^2 + \|\Bar{\varphi}\|_{H^2}^2\|\Bar{\mu}\|_{H^2}^2\|\rho\|_{V}^2  \\
    &\hspace{.5cm}+ \Big[\|\Bar{\varphi}\|_{H^2}^{3}\|\Bar{\varphi}\|_{L^\infty} + \|\Bar{\varphi}\|_{H^2}^4 + \|\Bar{\varphi}\|_{H^3}^2 \Big]\|\rho\|_{V}\|\rho\|
\end{align*}
Observe that $\int\limits_{0}^{T}\Lambda_0 \leq C$ and $\int\limits_{0}^{T}\Lambda_1 \leq \|h\|_{L^2(0,T;H)}^2$. By exploiting Gronwall's lemma and using an elliptic estimate, we obtain,
\begin{align*}
    \|\rho\|_{L^\infty(0,T;H^2)\cap L^2(0,T;H^4)} \leq C\|h\|_{L^2(0,T;H)}^2.
\end{align*}
Furthermore, consider \eqref{eq2.30} tested with $\chi \in V$, 
\begin{align*}
     \langle\rho', \chi\rangle + (\bm{\tau}\cdot\nabla\xi, \chi) + (\Bar{\textbf{u}}\cdot\nabla\rho, \chi) + (\textbf{v}\cdot\nabla\Bar{\varphi}, \chi) &= (m(\Bar{\varphi})\nabla(-\Delta\rho), \nabla\chi)+ \big(m(\Bar{\varphi})\nabla(F'(\varphi_h)-F'(\Bar{\varphi})-F''(\Bar{\varphi})\psi), \nabla\chi \big)\nonumber\\&\hspace{.5cm} (m'(\Bar{\varphi})\xi\nabla(-\Delta\xi), \nabla\chi)+  \big(m'(\Bar{\varphi})\xi\nabla(F'(\varphi_h)-F'(\Bar{\varphi})), \nabla\chi \big)\nonumber\\&\hspace{.5cm}
    \big((m(\varphi_h)-m(\Bar{\varphi})-m'(\Bar{\varphi})\psi)\nabla(-\Delta\Bar{\varphi}+F'(\Bar{\varphi})), \nabla \chi\big)
\end{align*}

estimating each of its terms by applying H\"olders, Gagliardo-Nirenberg and Young's inequalities, we obtain,
\begin{align*}
    |\langle\rho', \chi\rangle| &\leq C\Big[ \|\bm{\tau}\|_{L^4}\|\xi\|_{L^4} + \|\Bar{\textbf{u}}\|_{L^4}\|\rho\|_{L^4} + \|\textbf{v}\|_{L^4}\|\Bar{\varphi}\|_{L^4}  + \|\nabla^3\rho\| + \|\nabla\Bar{\varphi}\|_{L^4}\|\xi\|_{L^8}^2 \\&\hspace{.5cm}+ \|\xi\|_{L^4}\|\nabla\xi\|_{L^4}  + \|\nabla\Bar{\varphi}\|_{L^4}\|\rho\|_{L^4} +\|\nabla\rho\| +\|\xi\|_{L^4}\|\nabla^3\xi\|_{L^4}  \\
    &\hspace{.5cm}+ \|\rho\|_{L^4}\|\nabla\Bar{\mu}\|_{L^4} + \|\xi\|_{L^8}^2\|\nabla\Bar{\mu}\|_{L^4} \Big]\|\nabla\chi\| \\
     &\leq C\Big[ \|\nabla\bm{\tau}\|\|\xi\|^{1/2}\|\xi\|_{V}^{1/2} + \|\nabla\Bar{\textbf{u}}\|\|\rho\|^{1/2}\|\rho\|_{V}^{1/2} + \|\nabla\textbf{v}\|\|\Bar{\varphi}\|^{1/2}\|\Bar{\varphi}\|_{V}^{1/2}  \\
    &\hspace{.5cm}+ \|\nabla^3\rho\|+ \|\Bar{\varphi}\|_{H^2}^{1/2}\|\nabla\Bar{\varphi}\|^{1/2}\|\xi\|_{V}^{3/2}\|\xi\|^{1/2} + \|\xi\|_{H^2}^{3/4}\|\xi\|^{1/4}\|\xi\|_{H^2}^{3/4}\|\xi\|^{1/4}   \\
    &\hspace{.5cm}+  \|\Bar{\varphi}\|_{H^2}^{1/2}\|\nabla\Bar{\varphi}\|^{1/2}\|\rho\|_{V}^{1/2}\|\rho\|^{1/2} +\|\nabla\rho\| + \|\xi\|_{V}^{1/2}\|\xi\|^{1/2}\|\xi\|_{H^4}^{7/8}\|\xi\|^{1/8}  \\
    &\hspace{.5cm}+ \|\rho\|_{V}^{1/2}\|\rho\|^{1/2}\|\Bar{\mu}\|_{H^2}^{1/2}\|\nabla\Bar{\mu}\|^{1/2} + \|\xi\|_{V}^{3/2}\|\xi\|^{1/2}\|\Bar{\mu}\|_{H^2}^{1/2}\|\nabla\Bar{\mu}\|^{1/2} \Big]\|\nabla\chi\| \\
    &\leq C\Big[ \|\nabla\bm{\tau}\|\|\xi\|_{V} + \|\nabla\Bar{\textbf{u}}\|\|\rho\|_{V} + \|\nabla\textbf{v}\|\|\Bar{\varphi}\|_{V} + \|\rho\|_{H^3} + \|\Bar{\varphi}\|_{H^2}\|\xi\|_{V}^{2} + \|\xi\|_{H^2}\|\xi\|  \\
    &\hspace{.5cm}+  \|\Bar{\varphi}\|_{H^2}\|\rho\|_{V} +\|\rho\|_{V} + \|\xi\|_{V}\|\xi\|_{H^4} + \|\rho\|_{V}\|\Bar{\mu}\|_{H^2} + \|\xi\|_{V}^{2}\|\Bar{\mu}\|_{H^2} \Big]\|\nabla\chi\| \\
\end{align*}

Now integrating above inequality over $[0,T]$ and by applying the regularity results and the stability estimates we get,
\begin{equation*}
    \|\rho'\|_{L^2(0,T; V')} \leq C\|h\|_{L^2(0,T;H)}^2.
\end{equation*}
Hence the result follows. 
\end{proof} 
 \begin{remark}
     Note that by Aubin-Lions $L^\infty(0,T; H^2(\Omega))\cap H^1(0,T; V') \hookrightarrow C( \Bar{\Omega} \times[0,T])$. Hence we get, $\rho \rightarrow 0$ for a.e $(x,t) \in \Omega \times[0,T]$ as $\|h\|$ tends to $0$ using \eqref{eq4.4}. This limit is used to derive the first order optimality condition in theorem \ref{theorem4.4}.
 \end{remark}

\addcontentsline{toc}{chapter}{Bibliography}
\printbibliography
\nocite{SKN}

\section*{Statements and Declarations}

\textbf{Acknowledegment}: Greeshma K would like to thank the Department of Science and Technology (DST), India, for the Innovation in Science Pursuit for Inspired Research (INSPIRE) Fellowship (IF210199). 
\\\\
\textbf{Conflict of Interest}: There is no conflict of interest monetary or otherwise  amongst the two authors.
\\\\
\textbf{Author Contribution}:
Authors would like to declare that both of them have participated equally in designing, concept. calculation and preparation of the manuscript.

\end{document}